\documentclass[reqno]{amsart}
\usepackage{epsfig,latexsym,amsfonts,amssymb,amsmath,amscd}
\usepackage{amsthm}

\usepackage{tikz-cd}
\usepackage[mathscr]{eucal}
\usepackage{mathrsfs}
\usepackage{mathbbol}
\usepackage{array}
\usetikzlibrary{matrix,arrows,decorations.pathmorphing}
\usepackage{oldgerm,units,color}
\usepackage{hyperref}

\usepackage{pst-node}
\usepackage{graphicx}

\usepackage{eepic, epic}

\usepackage{ifthen}

\def\ssemiring0{$s$-semiring$^\dagger$}

\usepackage{tikz}
%

\usepackage{epsfig,latexsym,amsfonts,amssymb,amsmath,amscd,graphics,epic}
\usepackage{amsfonts,amssymb,amsmath,amscd,amsthm}
\usepackage[mathscr]{eucal}
\usepackage{mathrsfs}

\usepackage{mathbbol}

\usepackage{oldgerm,units}
\usepackage{wrapfig,epsfig}

\usepackage{ifthen}
\usepackage{mathbbol}

\usepackage{amsthm}
\usepackage[mathscr]{eucal}
\usepackage{mathrsfs}
\usepackage{mathbbol}
\usepackage{oldgerm,units}
\usepackage{wrapfig}





\newtheorem{theorem}{Theorem}[section]

\newtheorem{example}[theorem]{Example}
\newtheorem{remark}[theorem]{Remark}




\newcommand{\Net}{\mathbb N}


\newcommand{\one}{\mathbb{1}}
\newcommand{\zero}{\mathbb{0}}








\newcommand{\trop}[1]{\mathcal{#1}}

\newcommand{\tG}{\trop{G}}

\newcommand{\tT}{\trop{T}}







\newcommand{\Hom}{Hom}




















\hfuzz5pt 
\vfuzz5pt 

\pagestyle{empty}
    \ifx\proof\undefined
    \newenvironment{proof}{
    \smallskip
    \noindent\emph{Proof.}}{\hfill\(\Box\)
    \bigskip
    } \fi









\newcommand{\ifdef}[3]{\ifthenelse{\equal{#1}{true}}{#2}{#3}}







\pagenumbering{arabic} \pagestyle{plain}

\pagestyle{headings}

\textwidth 160mm \textheight 228mm \topmargin -5mm \evensidemargin
0mm \oddsidemargin 0mm

\definecolor{lgray}{gray}{0.90}

\def\Mod{\operatorname{Mod}}

\def\ctw{\cdot_{\operatorname{tw}}}

\def\vep{\varepsilon}

\def\({\left(}
\def\){\right)}

\def\pipe{{\underset{{\ \, }}{\mid}}}

\def\vsemifield0{$\nu$-semifield$^\dagger$}
\def\vsemiring0{$\nu$-semiring$^\dagger$}

\def\pipe1{{\underset{{1}}{\mid}}}
\def\lmod1{\mathrel  \pipe1  \joinrel \joinrel =}

\def\CFunFF1{\operatorname{CFun} (F,F)}

\def\semiring0{semiring$^{\dagger}$}
\def\Semiring0{Semiring$^{\dagger}$}
\def\Semirings0{Semirings$^{\dagger}$}
\def\semidomain0{semidomain$^{\dagger}$}
\def\semifield0{semifield$^{\dagger}$}
\def\semifields0{semifields$^{\dagger}$}
\def\vsemifields0{$\nu$-semifields$^{\dagger}$}
\def\domain0{domain$^{\dagger}$}
\def\predomain0{pre-domain$^{\dagger}$}
\def\predomains0{pre-domains$^{\dagger}$}
\def\domains0{domains$^{\dagger}$}
\def\vdomains0{$\nu$-domains$^{\dagger}$}

\def\domains0{domains$^\dagger$}

\def\ker{\operatorname{ker}}

\newcommand{\etype}[1]{\renewcommand{\labelenumi}{(#1{enumi})}}
\def\eroman{\etype{\roman}}

\def\pipe{{\underset{{\tG}}{\mid}}}

\def\lmod{\mathrel  \pipe \joinrel \joinrel =}
\def\pipe{{\underset{{\tG}}{\mid}}}

\def\diag{\operatorname{diag}}

\newtheorem{thm}[theorem]{Theorem}

\newtheorem*{thm*}{Theorem}

\newtheorem{cor}[theorem]{Corollary}

\def\Hom{\operatorname{Hom}}
\def\Mor{\operatorname{Mor}}
\newtheorem{lem}[theorem]{Lemma}
\newtheorem{rem}[theorem]{Remark}
\newtheorem{prop*}{Proposition}

\newtheorem{prop}[theorem]{Proposition}
\newtheorem{defn}[theorem]{Definition}
\newtheorem*{examp*}{Example}
\newtheorem*{examples*}{Examples}
\newtheorem*{remark*}{Remark}
\newtheorem*{defn*}{Definition}

\newtheorem*{nothma}{\textbf{Theorem A}}
\newtheorem*{nothmb}{\textbf{Proposition B}}
\newtheorem*{nothmbp}{\textbf{Proposition B'}}
\newtheorem*{nothmd}{\textbf{Proposition D}}
\newtheorem*{nothmd'}{\textbf{Proposition D'}}
\newtheorem*{nothme}{\textbf{Theorem E}}
\newtheorem*{nothmf}{\textbf{Theorem F}}
\newtheorem*{nothmg}{\textbf{Theorem C}}

\def\tT{\mathcal T}

\def\tTz{\tT_\zero}

\numberwithin{equation}{section}

\def\M0{M_{\zero}}

\def\PS{P}

\def\semirings0{semirings$^\dagger$}

\newcommand{\nPS}[1]{\PS_{(!#1)}}
\newcommand{\nPSo}[1]{\nPS{\one}}

\begin{document}


\title[Projective systemic modules]
{Projective systemic modules}


\author[J.~Jun]{Jaiung~Jun}
\address{Department of Mathematics, University of Iowa, Iowa City, IA 52242, USA} \email{jujun0915@gmail.com}

\author[K.~Mincheva]{Kalina~Mincheva}
\address{Department of Mathematics, Yale University, New Haven, CT 06511, USA}
\email{kalina.mincheva@yale.edu}

\author[L.~Rowen]{Louis Rowen}
\address{Department of Mathematics, Bar-Ilan University, Ramat-Gan 52900, Israel} \email{rowen@math.biu.ac.il}

\subjclass[2010]{Primary 16Y60,   13C10, 13C60, 20N20,
 08A05; Secondary 06F05, 14T05, 08A72, 12K10, 16D40}



\keywords{System, quasi-zero, triple, metatangible, negation map,
symmetrization, congruence, tropical algebra,  hypergroup,
projective, dual basis, supertropical algebra,  module, Schanuel's
Lemma, semiring, surpassing relation.}
\thanks{\noindent \underline{\hskip 3cm } }



\begin{abstract}

We develop the basic theory of projective modules and splitting in
the more general setting of systems. Systems provide a common
language for most tropical algebraic approaches including
supertropical algebra, hyperrings (specifically hyperfields), and
fuzzy rings. This enables us to prove analogues of classical
theorems for tropical and hyperring theory in a unified way. In this
context we prove a Dual Basis Lemma and  versions of Schanuel's
Lemma. 
\end{abstract}
\maketitle





\section{Introduction}

\subsection{Motivation}$ $

In recent years, there has been a growing interest in developing
 theories of algebraic structures, more general than (commutative)
rings, such as semirings, hyperrings (specifically hyperfields),
fuzzy rings and supertropical algebra.
The main motivation for the study of these structures is the number of applications to other areas of mathematics.
For instance, semirings arise naturally in tropical
algebraic geometry \cite{MS}. Using hyperfields
(which generalize fields by allowing ``multi-valued'' addition)
Baker and Bowler in \cite{BB1} successfully unify
various generalizations of matroids (combinatorial abstraction of
vector spaces) in an elegant way. Baker and Bowler's work is based
on the interesting idea  (as initiated by the third author in
\cite{Row16} building on \cite{Ga} and \cite{Lor1},  and then
\cite{JuR1}),  that one is able to treat certain well-known (but not
directly related) mathematical structures simultaneously.
To this end, one needs to appeal to more general algebraic structures than commutative rings (cf. \cite{BL2}).

The idea of proving results about classical objects, using these
general algebraic structures, has already been implemented in the
literature. For example, in \cite{Jun} the first author proved that
several topological spaces (algebraic varieties, Berkovich
analytifications, and real schemes) can be seen as sets of
``$H$-rational points'' of algebraic varieties for some hyperfields
$H$. In \cite{AD}, Anderson and Davis defined the notion of
hyperfield Grassmannians, generalizing a MacPhersonian (certain
moduli space of matroids). Furthermore, in a recent paper \cite{BL},
Baker and Lorscheid proved that certain moduli functors (of
matroids) are representable by algebraic structures called
pasteurized ordered blue prints.

In this paper, we continue developing the theory of the common framework for
the generalized algebra structures, called a \textbf{system}. As indicated in
Example~\ref{precmain}, this ``systemic'' theory encompasses most
algebraic approaches to tropical mathematics.
 In Examples~\ref{morphmean} and \ref{morphm}
we state explicitly for the reader's convenience how the systemic (generalized)
version of morphisms (which we call $\preceq$-morphisms) translates to tropical mathematics,
hyperrings, and fuzzy rings.

In \cite{IKR7} the famous basic structure theorems about composition
series, noetherian and artinian properties, etc., were studied for a
specific class of modules, called SA-modules. These arise in
tropical algebra, but not in classical algebra. Here we take a more categorical perspective,
and projective modules play the major role. Projective modules over semirings,
whose theory is analogous to classical exact sequences and module theory,
appear in \cite[Chapter~17]{golan92} and \cite{Ka1}, and have been
studied rather intensively over the years,
\cite{DeP,IJK,IKR6,Ka3,KN,Mac}.

An equivalent definition of projective module in   classical ring
theory is as a direct summand of a free module, but over an
arbitrary semiring this property is considerably stronger. (See
\cite[Example~4.6]{IKR6} for a projective module over a semiring
which is not a summand of a free module). The strong decomposition
results given in \cite{IKR6}
 rely on this more restrictive definition and show that all indecomposable
 ``strongly''
 projective modules over a ring ``lacking zero sums'' are
 principal, and thus the Grothendieck group is trivial. This view is continued in~\cite{KNZ}.
However, direct sums are ``too good'' to lead to a viable homology theory
over semirings.

In this paper we  return to the general categorical definition of
projective, cast in the language of systems, a crucial feature of
which is the ``surpassing relation'' $\preceq$ which generalizes
equality and is needed to reformulate analogs of   classical
ring-theoretic theorems in the semiring context. We  consider   more
general notions, $\preceq$-projectivity and $\succeq$-projectivity,
given in Definition~\ref{precproj},  based on
``$\preceq$-splitting'' and ``$\succeq$-splitting''
in~Definition~\ref{precspli}.  
Bringing  $\preceq$ into the picture,   has the following advantages:

 \begin{enumerate} \eroman
\item $\preceq$-morphisms include tropicalization, as indicated in
Example~\ref{morphmean}(i).
\item Idempotent mathematics can be formulated as a special case of
the symmetrized semiring and module.
\item Applications include modules over hyperfields.
\item The class of projectives is broadened to include  Example~\ref{idemproj}.
\item Classical results about projective modules (their characterization,  the Dual
Basis Lemma, and Schanuel's Lemma) are  a special case of the
$\preceq$-version, many of which are not available without the use
of $\preceq$.
\item  One can continue in analogy with classical lines, such as
Morita theory, which already has been treated in \cite{KN} and
\cite{Row19}. This thread is continued in homological algebra
\cite{JMR1} and other work in progress.
\end{enumerate}
Along the way, a more appropriate (and more general)
$\preceq$-version and $\succeq$-version of direct sum is given in
Definition~\ref{definition: direct sum}, via  systemic
generalization of splitting in \S\ref{splitont} and
characterizations of $\preceq$-projectivity in~\S\ref{prsys}.


\subsection{Main results}

\begin{nothma}[Theorem~\ref{splitdir}]
Let $\pi: \mathcal M \to \mathcal N$ be a homomorphism. If $ \nu $
$\preceq$-splits $\pi$ , then:
\begin{enumerate}\eroman
    \item
    $\mathcal M$ is
    the $\preceq$-direct sum  of $\mathcal M_1: =
    \pi(\mathcal M)$ and $\mathcal M_2: = (\one_\mathcal M (-)\nu
    \pi)(\mathcal M)$ with respect to the $\preceq$-morphisms
     $\pi_1 =\pi, \, \nu _1  = \nu ,\,$ $\pi_2 =
    (\one _\mathcal M (-)\nu \pi),$ $ \nu_2 = \one_{\mathcal M_2}$.
    \item $\mathcal M$ is
    the $\preceq$-direct sum  of $\mathcal M_1= \nu
    \pi   (\mathcal M)$ and $\mathcal M_2 = \ker_{\Mod,\mathcal M}\pi$,
    with respect to $\nu _i = \one_{\mathcal M_i}$ for $ i = 1,2$.
\end{enumerate}
\end{nothma}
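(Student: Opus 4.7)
The plan is to adapt the classical splitting-lemma argument (a split short exact sequence yields a direct-sum decomposition) to the systemic setting, replacing each equality by the surpassing relation $\preceq$. Everything will rest on the formal identity
\[
\one_\mathcal M \ = \ \nu\pi \,+\, (\one_\mathcal M (-) \nu\pi),
\]
which for each $m \in \mathcal M$ exhibits a ``projected'' summand $\nu\pi(m)$ and a ``complementary'' summand $(\one_\mathcal M (-)\nu\pi)(m)$.

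First I would return to Definition~\ref{definition: direct sum} to list precisely the conditions to verify: a $\preceq$-direct sum decomposition of $\mathcal M$ via $(\pi_i,\nu_i)_{i=1,2}$ requires (a) $\pi_i\nu_i \preceq \one_{\mathcal M_i}$, (b) the cross-compositions $\pi_i\nu_j$ for $i\neq j$ surpass $0$, and (c) $\nu_1\pi_1 + \nu_2\pi_2$ surpasses $\one_\mathcal M$. The $\preceq$-splitting hypothesis on $\nu$ delivers the crucial relation $\pi\nu\pi \preceq \pi$, which in turn makes $\nu\pi$ a $\preceq$-idempotent endomorphism on $\nu(\mathcal N)$ and, symmetrically, $\pi\nu$ a $\preceq$-idempotent on $\pi(\mathcal M)$.

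For part (i) I would take $\pi_1 = \pi$, $\nu_1 = \nu$, $\pi_2 = \one_\mathcal M (-)\nu\pi$, and $\nu_2 = \one_{\mathcal M_2}$ (the structural inclusion of $\mathcal M_2 \subseteq \mathcal M$). Condition~(a) for $i=1$ is the splitting hypothesis itself; condition~(a) for $i=2$ follows, after applying $\pi_2$ to a typical element $(\one_\mathcal M (-)\nu\pi)(m)$, from the $\preceq$-idempotency of $\nu\pi$. The cross-term $\pi_1\nu_2 = \pi(\one_\mathcal M (-)\nu\pi)$ unfolds to $\pi (-) \pi\nu\pi$, which surpasses $0$ by $\pi\nu\pi \preceq \pi$ combined with compatibility of $\preceq$ with $(+)$ and $(-)$; the other cross-term is symmetric. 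Condition~(c) is the formal identity displayed above.

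Part (ii) will then specialize (i) by replacing the external codomain $\pi(\mathcal M) \subseteq \mathcal N$ with its internal image $\nu\pi(\mathcal M) \subseteq \mathcal M$, and identifying the complementary summand with $\ker_{\Mod,\mathcal M}\pi$, which contains $(\one_\mathcal M (-)\nu\pi)(\mathcal M)$ because $\pi (-)\pi\nu\pi \preceq 0$. With the $\nu_i = \one_{\mathcal M_i}$ chosen as structural inclusions, the axioms become essentially tautological once (i) is in hand. The main obstacle will be the systemic bookkeeping: one must carefully invoke from earlier in the paper the compatibility of $\preceq$ with addition and quasi-negation in order to pass from $\pi\nu\pi \preceq \pi$ to $\pi (-)\pi\nu\pi \succeq 0$, and to confirm that $\preceq$-idempotency truly suffices for the defining conditions of a $\preceq$-direct sum rather than requiring a strict equality; once those compatibility lemmas are quoted, the proof reduces to a routine translation of the classical direct-sum construction into the language of systems.
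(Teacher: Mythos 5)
Your overall skeleton matches the paper's: verify the axioms of Definition~\ref{definition: direct sum} for $\pi_1 = \pi$, $\nu_1 = \nu$, $\pi_2 = \one_{\mathcal M}(-)\nu\pi$, $\nu_2 = \one_{\mathcal M_2}$, exploiting the identity $\nu\pi(m) + (\one_{\mathcal M}(-)\nu\pi)(m) = m + \nu\pi(m)^\circ \succeq m$. But there is a genuine gap where you claim ``condition (a) for $i=2$ follows $\ldots$ from the $\preceq$-idempotency of $\nu\pi$.'' What you actually need is $\preceq$-idempotency of $\pi_2 = \one_{\mathcal M}(-)\nu\pi$, and this does \emph{not} follow formally from $(\nu\pi)^2 \succeq \nu\pi$ when $\nu$ is merely a $\preceq$-morphism. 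Writing $f = \nu\pi$, one has $(\one(-)f)^2(m) = m(-)f(m)(-)f(m(-)f(m))$, and since $f$ is a $\preceq$-morphism the estimate for the last term is $f(m(-)f(m)) \preceq f(m)(-)f^2(m)$, which points the wrong way: you obtain $(\one(-)f)^2(m) \preceq (m(-)f(m)) + (f^2(m)(-)f(m))$, where the right-hand side does surpass $(\one(-)f)(m)$ when $f^2\succeq f$, but $A\preceq B\succeq C$ does not chain into the needed $A\succeq C$. The paper's Lemma~\ref{lem: 3.14}(ii) instead argues directly: because $\pi$ is a \emph{homomorphism}, $\pi((\one_{\mathcal M}(-)\nu\pi)(m)) = \pi(m)(-)\pi\nu\pi(m)$ exactly; this is $\succeq\zero$ by Lemma~\ref{circget}; $\nu$ sends it into $\mathcal M_{\operatorname{Null}}$ by Remark~\ref{morprop}(i); hence $(\one_{\mathcal M}(-)\nu\pi)^2(m)\succeq(\one_{\mathcal M}(-)\nu\pi)(m)$. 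This is the lemma you must quote, and its reliance on $\pi$ being a homomorphism is exactly why the theorem's hypothesis is phrased that way.

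There are also several reversed inequalities worth correcting because they obscure where the argument gets its leverage. Definition~\ref{definition: direct sum} requires $\one_{\mathcal M_i} \preceq \pi_i\nu_i$, not the reverse; the splitting hypothesis $\one_{\mathcal N}\preceq\pi\nu$ gives $\pi\preceq\pi\nu\pi$, not $\pi\nu\pi\preceq\pi$; and in part~(ii) the containment $(\one_{\mathcal M}(-)\nu\pi)(\mathcal M)\subseteq\ker_{\Mod,\mathcal M}\pi$ needs $\pi(-)\pi\nu\pi\succeq\zero$, not $\preceq\zero$. From $\pi\preceq\pi\nu\pi$, mere monotonicity of $\preceq$ under $(-)$ and $+$ yields only $\pi(-)\pi\nu\pi\preceq\pi^\circ$; the estimate $\pi(-)\pi\nu\pi\succeq\zero$ that you actually use (for both cross-term conditions and for part~(ii)) is Lemma~\ref{circget}, which works in either direction because quasi-zeros are fixed by $(-)$. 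Quoting that lemma explicitly is the right justification, not ``compatibility of $\preceq$ with $(+)$ and $(-)$.''
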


This statement holds for $h-$splitting as well.

\begin{nothmb}[Proposition~\ref{epicspl}]
The following are equivalent for a systemic module  $\mathcal P$:
\begin{enumerate} \eroman
    \item
    $\mathcal P$  is  $(\preceq,h)$-projective.
    \item
    Every   $\preceq$-onto homomorphism to   $\mathcal P$
    $\preceq$-splits.
    \item
    There is a $\preceq$-onto homomorphism from a free system to
    $\mathcal P$ that  $\preceq$-splits.
    \item
    Given a $\preceq$-onto $\preceq$-morphism $h:\mathcal M \to \mathcal
    M'$, the map $ \Mor_\preceq (\mathcal P,h): \Mor_\preceq(\mathcal P,
    \mathcal M) \to \Mor_\preceq(\mathcal P,\mathcal M')$ given by $ g
    \mapsto hg$ is $\preceq$-onto.
\end{enumerate}

The h-version also holds (Proposition~\ref{epicspl1}).
\end{nothmb}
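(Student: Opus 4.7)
The plan is to prove the cycle (i) $\Rightarrow$ (iv) $\Rightarrow$ (ii) $\Rightarrow$ (iii) $\Rightarrow$ (i), following the classical template for characterizations of projective modules, but adapted to accommodate the surpassing relation $\preceq$. The equivalence (i) $\Leftrightarrow$ (iv) should amount to unwinding the definition of $(\preceq,h)$-projectivity: both conditions express that the covariant functor $\Mor_\preceq(\mathcal P, -)$ carries $\preceq$-onto $\preceq$-morphisms to $\preceq$-onto maps of hom-sets.

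For (iv) $\Rightarrow$ (ii), given a $\preceq$-onto homomorphism $\pi: \mathcal M \to \mathcal P$, I would apply (iv) with $\mathcal M' = \mathcal P$ and $h = \pi$. The resulting $\preceq$-ontoness of $\Mor_\preceq(\mathcal P, \pi)$, applied to $\id_{\mathcal P}$, produces some $\nu: \mathcal P \to \mathcal M$ with $\pi\nu$ $\preceq$-equivalent to $\id_{\mathcal P}$, which is precisely the required $\preceq$-splitting. For (ii) $\Rightarrow$ (iii), I would invoke the basic fact, already in place in the systemic framework, that every systemic module admits a $\preceq$-onto homomorphism from a free system, obtained by mapping a chosen generating set, and then apply (ii) to this homomorphism.

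The main substance is (iii) $\Rightarrow$ (i). Suppose $\pi: \mathcal F \to \mathcal P$ is $\preceq$-onto with $\mathcal F$ free, $\preceq$-split by $\nu: \mathcal P \to \mathcal F$. Given a $\preceq$-onto $h: \mathcal M \to \mathcal M'$ and a $\preceq$-morphism $g: \mathcal P \to \mathcal M'$, consider $g\pi: \mathcal F \to \mathcal M'$ and appeal to the universal property of the free system $\mathcal F$: lifting the images of basis elements through the $\preceq$-onto $h$ produces $\tilde g: \mathcal F \to \mathcal M$ with $h\tilde g \preceq g\pi$. Then $\bar g := \tilde g \nu: \mathcal P \to \mathcal M$ is the desired lift, since
\[
h \bar g \;=\; h\tilde g \nu \;\preceq\; g\pi\nu \;\preceq\; g,
\]
using the splitting relation $\pi\nu \preceq \id_{\mathcal P}$ (in the appropriate systemic sense) at the final step.

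The hard part will be tracking the direction of $\preceq$ carefully: since it is not symmetric, each composition of $\preceq$-inequalities must be checked to yield the intended orientation, and one needs the $\preceq$-variant of the universal property of free systems guaranteeing that any assignment of basis images to $\preceq$-preimages under $h$ extends to a well-defined $\preceq$-morphism. Both ingredients should be available from the basic theory of free systems and $\preceq$-splitting developed earlier in the paper (in particular from the splitting framework of \S\ref{splitont} and Theorem~\ref{splitdir}). The $h$-version, Proposition~\ref{epicspl1}, will follow by an identical argument with $h$-splittings and $h$-onto morphisms throughout in place of their $\preceq$-counterparts.
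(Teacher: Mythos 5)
Your overall plan (a cycle (i)$\Rightarrow$(iv)$\Rightarrow$(ii)$\Rightarrow$(iii)$\Rightarrow$(i)) is structurally the same as the paper's argument, which establishes (i)$\Rightarrow$(ii)$\Rightarrow$(iii)$\Rightarrow$(i) together with (i)$\Leftrightarrow$(iv), and the step (ii)$\Rightarrow$(iii) and the use of the free system in (iii)$\Rightarrow$(i) are exactly as in the paper. However, in the crucial step (iii)$\Rightarrow$(i) you have written both $\preceq$-relations in the wrong direction, and consequently the chain you display proves the wrong inequality.

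Recall that per Definition~\ref{precspli}, $\nu$ $\preceq$-splits $\pi$ means $\one_{\mathcal P}\preceq\pi\nu$ (you wrote $\pi\nu\preceq\id_{\mathcal P}$, which is $\succeq$-splitting), and per Definition~\ref{precproj}, $\tilde g$ $\preceq$-lifts $g\pi$ means $g\pi\preceq h\tilde g$ (you wrote $h\tilde g\preceq g\pi$, which is $\succeq$-lifting). The lift produced by the free universal property in Lemma~\ref{lemma: free system is projective} satisfies $f\preceq h\tilde f$ because $\preceq$-ontoness of $h$ gives $h(x_i)\succeq f(e_i)$, not $h(x_i)\preceq f(e_i)$. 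With your reversed conventions your displayed chain concludes $h\bar g\preceq g$, which is the condition for a $\succeq$-lift, not for a $\preceq$-lift, so as written the argument does not establish $(\preceq,h)$-projectivity. The fix is to flip both inequalities and then the composite reads
\[
g \;=\; g\,\id_{\mathcal P} \;\preceq\; g(\pi\nu) \;=\; (g\pi)\nu \;\preceq\; (h\tilde g)\nu \;=\; h(\tilde g\nu),
\]
using that $\preceq$ is preserved under post-composition by the $\preceq$-morphism $g$ (for the first $\preceq$) and under pre-composition by $\nu$ (for the second). This is exactly the paper's computation. Note also that the ingredient you invoke for lifting through the free object is Lemma~\ref{lemma: free system is projective}, not Theorem~\ref{splitdir}; the latter concerns $\preceq$-direct sums and is not needed here.

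One minor point in (iv)$\Rightarrow$(ii): "$\pi\nu$ $\preceq$-equivalent to $\id_{\mathcal P}$" should be sharpened to "$\id_{\mathcal P}\preceq\pi\nu$", which is what $\preceq$-ontoness of $\Mor_\preceq(\mathcal P,\pi)$ (in the pointwise order on morphisms) applied to $\id_{\mathcal P}$ actually yields, and which is the definition of $\preceq$-splitting.
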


\begin{nothmbp}[Proposition~\ref{epicspl10}] The following are equivalent for a systemic module  $\mathcal P$:
\begin{enumerate} \eroman
\item
$\mathcal P$  is   $\succeq$-projective.
\item Every $\succeq$-onto  $\preceq$-morphism to $\mathcal P$ $\succeq$-splits.
\item There is a $\succeq$-onto $\preceq$-morphism  from a free system to $\mathcal P$ that  h-$\succeq$-splits.
\item The functor $\Hom (\mathcal P,\underline{\phantom{w}})$
 sends  $\succeq$-onto $\preceq$-morphisms to
$\succeq$-onto $\preceq$-morphisms.
\end{enumerate}
\end{nothmbp}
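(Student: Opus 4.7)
The plan is to establish the cyclic chain of implications (i) $\Rightarrow$ (iv) $\Rightarrow$ (ii) $\Rightarrow$ (iii) $\Rightarrow$ (i), in parallel with the proof of Proposition~\ref{epicspl}, systematically replacing $\preceq$ by $\succeq$ and tracking where the h-strengthening of splitting in hypothesis (iii) is needed. The overall architecture is standard for projectivity-type characterizations; the subtlety comes from the interaction between the two surpassing relations $\preceq$ and $\succeq$.

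For (i) $\Rightarrow$ (iv), I would unwind Definition~\ref{precproj}: given a $\succeq$-onto $\preceq$-morphism $h : \mathcal M \to \mathcal M'$ and any $g \in \Hom(\mathcal P, \mathcal M')$, the $\succeq$-projectivity of $\mathcal P$ produces $\tilde g \in \Hom(\mathcal P, \mathcal M)$ with $h \tilde g \succeq g$, which is precisely the statement that the image of $\tilde g$ under $\Hom(\mathcal P, h)$ surpasses $g$, so $\Hom(\mathcal P, h)$ is $\succeq$-onto. For (iv) $\Rightarrow$ (ii), I would apply (iv) to the given $\succeq$-onto $\pi : \mathcal M \to \mathcal P$ and to the morphism $\one_\mathcal P$, obtaining $\nu : \mathcal P \to \mathcal M$ with $\pi \nu \succeq \one_\mathcal P$, which is by Definition~\ref{precspli} a $\succeq$-splitting of $\pi$.

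For (ii) $\Rightarrow$ (iii), I would take a generating set of $\mathcal P$ and form the canonical $\succeq$-onto (in fact onto) $\preceq$-morphism $\tau : \mathcal F \to \mathcal P$ from a free system $\mathcal F$; (ii) then provides a $\succeq$-splitting $\sigma$ of $\tau$. The h-version of the splitting in the conclusion must be read off from the free structure on $\mathcal F$, exactly as in the corresponding step in the proof of Proposition~\ref{epicspl}, where the basis of $\mathcal F$ allows one to promote the bare splitting to an h-splitting.

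The main obstacle is (iii) $\Rightarrow$ (i). Given an h-$\succeq$-splitting $\sigma : \mathcal P \to \mathcal F$ of $\tau : \mathcal F \to \mathcal P$, together with a $\succeq$-onto $\preceq$-morphism $h : \mathcal M \to \mathcal M'$ and an arbitrary $g \in \Hom(\mathcal P, \mathcal M')$, I would first use the genuine projectivity of the free system $\mathcal F$ to lift the composite $g \tau : \mathcal F \to \mathcal M'$ along $h$ to some $\psi : \mathcal F \to \mathcal M$ with $h \psi \succeq g \tau$, and take the candidate lift of $g$ to be $\psi \sigma : \mathcal P \to \mathcal M$. The delicate point is verifying $h \psi \sigma \succeq g$: one needs to chain the inequalities $h \psi \succeq g \tau$ and $\tau \sigma \succeq \one_\mathcal P$ through the $\preceq$-morphism $g$, which demands the monotonicity of $\preceq$-morphisms with respect to $\succeq$. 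This is exactly where the h-strengthening in the hypothesis of (iii) is consumed, and it accounts for the asymmetry with the $\preceq$-version in Proposition~\ref{epicspl}, where the weaker plain-splitting hypothesis already sufficed.
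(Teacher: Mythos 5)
The paper's own proof of Proposition~\ref{epicspl10} is only the one-line remark that the argument is ``analogous to the proof of Proposition~\ref{epicspl}, where we reverse $\preceq$ and $\succeq$ and apply Lemma~\ref{lemma: free system is projective} (taking $\succeq$-morphisms instead of homomorphisms),'' so your attempt to flesh out the cycle of implications is a reasonable thing to do. However, there is a genuine error in the direction of every surpassing inequality that you write, and it is not a typo --- it propagates through the whole argument and makes you prove the $\preceq$-version rather than the $\succeq$-version. Recall the paper's conventions: in Definition~\ref{precproj}(v), a $\succeq$-lift $\tilde f$ of $f$ along $h$ satisfies $f \succeq h\tilde f$ (not $h\tilde f \succeq f$); in Definition~\ref{precspli}, $\nu$ $\succeq$-splits $\pi$ means $\one_\mathcal N \succeq \pi\nu$ (not $\pi\nu \succeq \one_\mathcal N$); and by Definition~\ref{precont1}(iii), a map $\varphi$ is $\succeq$-onto when every $b'$ in the target satisfies $b' \succeq \varphi(b)$ for some $b$ in the source. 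You write ``$h\tilde g \succeq g$'' in (i)$\Rightarrow$(iv), ``$\tau\sigma \succeq \one_\mathcal P$'' for the splitting, and ``$h\psi \succeq g\tau$'' and aim for ``$h\psi\sigma \succeq g$'' in (iii)$\Rightarrow$(i). All four have the inequality pointing the wrong way. The correct chain for (iii)$\Rightarrow$(i) is: $\succeq$-projectivity of the free module $\mathcal F$ (Lemma~\ref{lemma: free system is projective}, \emph{not} ``genuine projectivity'' --- that is unavailable since $h$ is merely a $\succeq$-onto $\preceq$-morphism) gives $g\tau \succeq h\psi$; the $\succeq$-splitting gives $\one_\mathcal P \succeq \tau\sigma$; then monotonicity of $g$ (Definition~\ref{mor}(v)) yields $g \succeq g\tau\sigma \succeq h\psi\sigma$, which is the $\succeq$-lift condition. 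Your chain, run with your signs, would instead conclude $h\psi\sigma \succeq g$, i.e.\ a $\preceq$-lift.

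A secondary point: your remark about the h-strengthening in (iii) being ``read off from the free structure on $\mathcal F$... exactly as in the corresponding step in the proof of Proposition~\ref{epicspl}'' is not accurate. In Proposition~\ref{epicspl}(iii) the conclusion is only a $\preceq$-split, not an h-split, so there is no precedent there for promoting the splitting map $\sigma : \mathcal P \to \mathcal F$ to a homomorphism. Nor does the mechanism of Remark~\ref{ccom}(i) apply, since that remark promotes maps \emph{out of} a free module to homomorphisms, whereas $\sigma$ maps \emph{into} $\mathcal F$. If you want (ii)$\Rightarrow$(iii), you will need to address where the homomorphism hypothesis on $\sigma$ actually comes from --- or route the implications differently (e.g.\ (i)$\Rightarrow$(iii) directly).
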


In the following statements we use various notions of {\bf kernel}
for a map $f: \mathcal{M} \rightarrow \mathcal{N}$ (cf.
Definition~\ref{modker} and Definition~\ref{congker}):
\begin{enumerate}
    \item Null-module kernel $\ker_{\Mod,\mathcal M} f$ defined as the preimage of the set $\{a \in \mathcal{N}: a \succeq \zero\}.$
    \item Congruence kernel $\ker_N f:=\{ (a_0,a_1) \in \mathcal M \times \mathcal M : f(a_0) =f(a_1) \}.$
    \item $\preceq$-congruence kernel $\ker_{N,\preceq} f:=\{ (a_0,a_1) \in \mathcal M \times \mathcal M : f(a_0) = f(a_1) ,\quad f(a_0), f(a_1) \in \mathcal N_\textrm{Null} \}.$
\end{enumerate}

\begin{nothmg} [Theorem~\ref{Sch2}] If $\mathcal P_1 $ is
    $(\preceq,h)$-projective  with a  $\preceq$-onto homomorphism $\pi: \mathcal P
    \longrightarrow \mathcal P_1 $ whose null-module kernel $\mathcal K$
    is
    $(\preceq,h)$-projective, then $\mathcal P $  also is
    $(\preceq,h)$-projective.
\end{nothmg}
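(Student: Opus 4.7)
The plan is to combine the splitting characterization of projectivity from Proposition B with the $\preceq$-direct sum decomposition from Theorem A, and then show that a $\preceq$-direct sum of $(\preceq,h)$-projectives is $(\preceq,h)$-projective. The argument is the systemic analog of the classical fact that in a short exact sequence $0\to K\to P\to P_1\to 0$, if both $P_1$ and $K$ are projective then $P$ is projective (and the sequence splits).

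First, apply Proposition~\ref{epicspl} to $\mathcal P_1$: since $\mathcal P_1$ is $(\preceq,h)$-projective and $\pi:\mathcal P\to\mathcal P_1$ is $\preceq$-onto, condition (ii) there produces a $\preceq$-morphism $\nu:\mathcal P_1\to\mathcal P$ that $\preceq$-splits $\pi$. Next, feed this splitting into Theorem~\ref{splitdir}(ii) to decompose $\mathcal P$ as the $\preceq$-direct sum of $\mathcal P'_1:=\nu\pi(\mathcal P)$ and the null-module kernel $\ker_{\Mod,\mathcal P}\pi=\mathcal K$, relative to the structural $\preceq$-morphisms prescribed there. The splitting identity $\pi\nu\preceq\one_{\mathcal P_1}$ realizes $\mathcal P'_1$ and $\mathcal P_1$ as $\preceq$-isomorphic via $\nu$ and $\pi|_{\mathcal P'_1}$, and this $\preceq$-isomorphism transports $(\preceq,h)$-projectivity from $\mathcal P_1$ to $\mathcal P'_1$ (checked directly against characterization (iv) of Proposition~\ref{epicspl}).

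The main step is then to verify $(\preceq,h)$-projectivity of $\mathcal P$ using the lifting criterion (iv) of Proposition~\ref{epicspl}. Given a $\preceq$-onto $\preceq$-morphism $h:\mathcal M\to\mathcal M'$ and a $\preceq$-morphism $g:\mathcal P\to\mathcal M'$, compose $g$ with the structural injections of the $\preceq$-direct sum $\mathcal P\cong\mathcal P'_1\oplus_\preceq\mathcal K$ to obtain $g_1:\mathcal P'_1\to\mathcal M'$ and $g_2:\mathcal K\to\mathcal M'$. Using $(\preceq,h)$-projectivity of $\mathcal P'_1$ and $\mathcal K$, lift each $g_i$ through $h$ to a $\preceq$-morphism $\widetilde g_i:\mathcal P'_i\to\mathcal M$ with $h\widetilde g_i\preceq g_i$. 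Combine the $\widetilde g_i$ through the $\preceq$-direct sum structure of Definition~\ref{definition: direct sum} to produce $\widetilde g:\mathcal P\to\mathcal M$, and show $h\widetilde g\preceq g$ using the surpassing triangle inequalities for $\preceq$.

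The hard part is that last combination step: the defining equations of the $\preceq$-direct sum only hold up to $\preceq$, so one cannot simply add componentwise as in the classical case; instead, one must chase the surpassing relations through the $\preceq$-morphisms $\pi_i,\nu_i$ issuing from Theorem~\ref{splitdir} and check that the combined lift still $\preceq$-dominates $g$ globally rather than merely on each summand. Particular care is needed in the $h$-variant, since $h$-splittings impose a stricter surpassing behavior, so one must verify that the combined $\widetilde g$ respects the $h$-compatibility; this is where the hypothesis that $\mathcal K$ is the \emph{null-module} kernel (rather than some weaker congruence kernel) is essential, as it guarantees that contributions from $\mathcal K$ land in $\mathcal N_{\mathrm{Null}}$ and do not spoil the $\preceq$-control on the $\mathcal P'_1$ component.
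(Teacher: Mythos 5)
Your strategy is close in spirit to the paper's, but it routes through an intermediate claim that opens a genuine gap. The paper does not pass through projectivity of $\mathcal P'_1=\nu\pi(\mathcal P)$. Instead, after obtaining the $\preceq$-retract $\nu$ of $\pi$, it shows that $\pi_2=\one_{\mathcal P}(-)\nu\pi$ maps all of $\mathcal P$ into $\mathcal K$ (this is exactly where the hypothesis that $\mathcal K$ is the \emph{null-module} kernel enters: $\pi(\pi_2(b))=\pi(b)(-)\pi\nu\pi(b)\succeq\pi(b)(-)\pi(b)\succeq\zero$, so $\pi_2(b)\in\mathcal K$). Then, for a test $\preceq$-onto homomorphism $h$ and $\preceq$-morphism $f:\mathcal P\to\mathcal M'$, it lifts $f\nu:\mathcal P_1\to\mathcal M'$ through $h$ using $(\preceq,h)$-projectivity of $\mathcal P_1$ (note the domain is the full $\mathcal P_1$, not the image $\nu\pi(\mathcal P)$), lifts $f|_{\mathcal K}$ using $(\preceq,h)$-projectivity of $\mathcal K$, and sets $\tilde f=\tilde f_1\pi+\tilde f_2\pi_2$. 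A short $\preceq$-chase using Definition~\ref{mor}(iii)(v) then gives $h\tilde f\succeq f$, as wanted.

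The gap in your version is the step ``this $\preceq$-isomorphism transports $(\preceq,h)$-projectivity from $\mathcal P_1$ to $\mathcal P'_1$.'' The paper has no notion of $\preceq$-isomorphism, and the nearest usable tool, Proposition~\ref{retractlift}, requires a $\preceq$-quasi-isomorphism $\mathcal P'_1\to\mathcal P_1$, i.e.\ a $\preceq$-onto homomorphism that is also null-monic. You can verify that $\pi|_{\mathcal P'_1}$ is $\preceq$-onto, but there is no reason it should be null-monic: $\pi(\nu\pi(b))\succeq\zero$ does not force $\nu\pi(b)\succeq\zero$, and $\nu$ is merely a $\preceq$-morphism, so it is not injective in any useful sense. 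Nor is $\mathcal P'_1$ obviously a $\preceq$-retract of $\mathcal P_1$, since $\nu(\mathcal P_1)$ need not lie inside $\nu\pi(\mathcal P)$ (the map $\pi$ is only $\preceq$-onto, not onto). So the transport claim, which is the load-bearing step of your outline, is not substantiated. (You also wrote the splitting identity as $\pi\nu\preceq\one_{\mathcal P_1}$; the paper's convention in Definition~\ref{precspli} is $\one_{\mathcal P_1}\preceq\pi\nu$, so the inequality is reversed, though this is likely a slip.)

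The fix is precisely the paper's move: never try to show $\nu\pi(\mathcal P)$ is projective. Work with $\mathcal P_1$ itself as the domain for the first lift (via $f\nu$, not $f\nu_1$), and with $\mathcal K$ for the second, and reassemble by precomposing with $\pi$ and $\pi_2$. Since $\pi$ is a homomorphism and $\pi_2(\mathcal P)\subseteq\mathcal K$, both composites are well-defined $\preceq$-morphisms on $\mathcal P$, and the combination argument you sketched at the end then succeeds without needing any projectivity statement about $\nu\pi(\mathcal P)$. Incidentally, your explanation of where the null-module-kernel hypothesis enters is a bit off: it is not to control where contributions land in $\mathcal N_{\mathrm{Null}}$, but to guarantee that $\pi_2$ factors through $\mathcal K$ so that $\tilde f_2\pi_2$ makes sense at all.
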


\begin{nothmd}[Proposition~\ref{Dualbas}---``$\preceq$-Dual Basis Lemma'']
A module pseudo-system $(\mathcal P, \tT_\mathcal P,(-),\preceq)$
that is $\preceq$-generated by $\{p_i \in \mathcal P:i \in I\}$ is
$(\preceq,h)$-projective (resp.~h-projective) if and only if there
are $\preceq$-onto $\preceq$-morphisms (resp.~homomorphisms) $g_i :
\mathcal P \to \mathcal A$ such that for all $a\in \mathcal A$ we
have $a \preceq\sum g_i(a) p_i,$ where $g_i(a) = \zero$ for all but
finitely many $i$.
\end{nothmd}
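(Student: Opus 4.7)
The plan is to adapt the classical Dual Basis Lemma to the systemic setting, using the characterization of $(\preceq,h)$-projectivity from Proposition~\ref{epicspl}, with the coordinate maps of a $\preceq$-splitting playing the role of the ``dual basis''.

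For the forward direction, I would form the free module-pseudo-system $\mathcal F$ on the index set $I$ with basis $\{e_i : i \in I\}$, and define the canonical homomorphism $\varphi : \mathcal F \to \mathcal P$ by $e_i \mapsto p_i$. Since $\{p_i\}$ $\preceq$-generates $\mathcal P$, the map $\varphi$ is $\preceq$-onto. By Proposition~\ref{epicspl}(iii), there is a $\preceq$-morphism $\nu : \mathcal P \to \mathcal F$ that $\preceq$-splits $\varphi$, i.e., $\varphi\nu \succeq \one_{\mathcal P}$. For each $a \in \mathcal P$, write $\nu(a) = \sum_{i} g_i(a)\, e_i$ with only finitely many nonzero terms, thereby defining the ``dual basis'' maps $g_i : \mathcal P \to \mathcal A$ as the $i$-th coordinate of $\nu$. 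Then each $g_i$ is a $\preceq$-morphism (this is where I must check that coordinate projections of $\preceq$-morphisms into a free system are themselves $\preceq$-morphisms), and applying $\varphi$ gives
\[
    \sum_{i} g_i(a)\, p_i \;=\; \varphi(\nu(a)) \;\succeq\; a,
\]
which is precisely the required $a \preceq \sum g_i(a) p_i$.

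For the converse, suppose such $\{p_i\}$ and $\{g_i\}$ are given. To deduce $(\preceq,h)$-projectivity, I would verify condition (iv) of Proposition~\ref{epicspl}: given a $\preceq$-onto $\preceq$-morphism $h : \mathcal M \to \mathcal M'$ and an arbitrary $\preceq$-morphism $f : \mathcal P \to \mathcal M'$, I want to produce a lift $\tilde f : \mathcal P \to \mathcal M$ with $h\tilde f \succeq f$. For each $i \in I$, use $\preceq$-ontoness of $h$ to choose $m_i \in \mathcal M$ with $h(m_i) \succeq f(p_i)$. Then define $\tilde f(a) := \sum_{i} g_i(a)\, m_i$; the sum is finite by hypothesis. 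A direct computation yields
\[
    h(\tilde f(a)) \;=\; \sum_{i} g_i(a)\, h(m_i) \;\succeq\; \sum_{i} g_i(a)\, f(p_i) \;=\; f\!\Bigl(\sum_{i} g_i(a)\, p_i\Bigr) \;\succeq\; f(a),
\]
using monotonicity of $+$ and scalar multiplication for $\preceq$, the fact that $f$ is a $\preceq$-morphism, and the hypothesis $a \preceq \sum g_i(a) p_i$.

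The h-projective case will run in parallel: replace every $\preceq$-morphism above by a homomorphism, observing that the coordinate projections of a $\preceq$-splitting by a homomorphism are themselves homomorphisms, and that the lift $\tilde f$ is then a homomorphism because each $g_i$ is. The main obstacle I anticipate is bookkeeping the $\preceq$-morphism versus homomorphism distinction at two delicate points: (1) checking that the $g_i$ produced from the splitting are $\preceq$-onto onto $\mathcal A$ (one may need to invoke the $\preceq$-generation hypothesis together with the fact that $\nu$ surpasses a section to identify the image), and (2) ensuring that $\tilde f$ respects the tangible set $\tT_\mathcal P$, the negation map $(-)$, and additivity under the pseudo-system axioms, so that the final chain of $\succeq$'s is legitimate and compatible with Proposition~\ref{epicspl}(iv).
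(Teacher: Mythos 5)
Your forward direction matches the paper's argument closely: form the free module $\mathcal F = \mathcal A^{(I)}$ with base $\{e_i\}$, take the canonical $\preceq$-onto homomorphism $f:\mathcal F\to\mathcal P$ (using $\preceq$-generation), $\preceq$-split it via Proposition~\ref{epicspl}, and read off the $g_i$ as the coordinate projections of the splitting map. Your converse, however, takes a genuinely different route. The paper reassembles the $g_i$ into a single $\preceq$-morphism $g:\mathcal P\to\mathcal F$, $g(a)=\sum g_i(a)e_i$, verifies $fg\succeq\one_{\mathcal P}$ directly, and then simply invokes the implication (iii)$\Rightarrow$(i) of Proposition~\ref{epicspl}. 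You instead build the lift $\tilde f(a)=\sum_i g_i(a)m_i$ by hand, choosing $m_i$ with $h(m_i)\succeq f(p_i)$, which amounts to re-deriving the relevant part of Proposition~\ref{epicspl} internally. Both routes are sound; the paper's is shorter, yours makes the dual-basis mechanism completely explicit.

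Two small imprecisions in the converse. First, you phrase it as verifying Proposition~\ref{epicspl}(iv) for a ``$\preceq$-onto $\preceq$-morphism $h$,'' but the first equality $h(\tilde f(a))=\sum_i g_i(a)\,h(m_i)$ holds only when $h$ is a \emph{homomorphism}; for a general $\preceq$-morphism Definition~\ref{mor}(iii) gives only $h(\tilde f(a))\preceq\sum_i g_i(a)h(m_i)$, which points the wrong way in your chain. This is harmless because Definition~\ref{precproj}(iii) of $(\preceq,h)$-projective quantifies precisely over $\preceq$-onto homomorphisms $h$, so you may (and should) assume $h$ is a homomorphism. Second, the step $\sum_i g_i(a)f(p_i)=f(\sum_i g_i(a)p_i)$ should be a $\succeq$ rather than an equality, since $f$ is only assumed to be a $\preceq$-morphism; by Definition~\ref{mor}(iii) one has $f(\sum_i g_i(a)p_i)\preceq\sum_i g_i(a)f(p_i)$, which is the inequality you actually use. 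Your flagged concern about establishing $\preceq$-ontoness of the constructed $g_i$ in the forward direction is well taken, but the paper's own proof leaves this point unaddressed as well, so it is not a gap peculiar to your write-up.
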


\begin{nothmd'}[Proposition~\ref{Dualbas1}---``$\succeq$-Dual Basis Lemma'']
Suppose a module pseudo-system $(\mathcal P, \tT_\mathcal
P,(-),\preceq)$ is generated by $\{p_i \in \mathcal P:i \in I\}$.
Then $\mathcal P$ is $(\succeq,h)$-projective if and only if there
are $\succeq$-onto $\succeq$-morphisms $g_i : \mathcal P \to
\mathcal A$ such that for all $a\in \mathcal A$ we have $a
\succeq\sum g_i(a) p_i,$ where $g_i(a) = \zero$ for all but finitely
many $i$.
\end{nothmd'}

 These are tied in
with $\preceq$-idempotent and $\preceq$-von Neumann regular matrices
in Proposition~\ref{kertriv1} and Corollary \ref{vNr}.

 With the basic definitions and properties in hand, one is
ready to embark on the part of module theory involving projective
 modules. Our main   application is  Schanuel's Lemma over
 semirings.

\begin{nothme}[Theorem~\ref{trSh}]
   Suppose we have $\preceq$-morphisms $\mathcal P_1  \overset {f_1} \longrightarrow \mathcal M$ and
    $\mathcal P_2  \overset {f_2} \longrightarrow \mathcal M$  with $f_1$ and $f_2$  onto. (We are
    not assuming that either $\mathcal P_i $ is projective.) Let
 $$\mathcal P = \{ (b_1, b_2)   :\quad b_i \in \mathcal P_i, \ f_1(b_1) =
        f_2(b_2)\},$$   a submodule of $\mathcal P_1 \oplus \mathcal P_2$,
        together the restriction $\pi_i^{\operatorname{res}}$ of
        the projection  $\pi_i: \mathcal P \to \mathcal P_i$ on the $i$ coordinate, for $i = 1,2$.
    \begin{enumerate}\eroman
        \item   $\pi_1^{\operatorname{res}}:\mathcal P \to \mathcal P_{1}$ is an onto homomorphism and
        and there is an
        onto homomorphism $$ \ker_N \pi_1^{\operatorname{res}} \to \ker_N  f_2, $$
        (This part is
        purely semiring-theoretic and does not require a system.)
            \item The maps $f_1\pi_1^{\operatorname{res}} ,f _2 \pi_2^{res}: \mathcal P \to \mathcal M$ are the same.
     \item
        In the systemic setting, $\pi_1^{\operatorname{res}}$ also induces
         $\preceq$-quasi-isomorphism
     \[
    \pi_{N,\preceq}:  \ker_{N, \preceq} \pi_1^{\operatorname{res}} \to \ker_{N, \preceq} f_2.\]

     \item In (iii), if $f_1$ also is null-monic, we have the following $\preceq$-quasi-isomorphism:
    \[
     \ker_{N, \preceq}  f_1\pi_1^{\operatorname{res}} \to \ker_{N, \preceq}  f_2.
     \]

 \item
 If $\mathcal P_1 $ is projective,   then it is a retract
 of $\mathcal P$ with respect to the projection $\pi_1:\mathcal P \to \mathcal
 P_1$.

 \item
 If $\mathcal P_1 $ is $\preceq$-projective, then it is a
 $\preceq$-retract
 of $\mathcal P$ with respect to the projection $\pi_1:\mathcal P \to \mathcal P_1$, and $\mathcal P$ is the $\preceq$-direct sum of
 $\mathcal P_1$ and $(\one_\mathcal P (-) \nu_1 \pi_1)(\mathcal P)$.

    \end{enumerate}
 \end{nothme}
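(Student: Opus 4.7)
The plan is to address parts (i)--(vi) in sequence, with (i)--(iv) being essentially bookkeeping on the pullback $\mathcal{P}$ and (v)--(vi) invoking the projectivity criteria of Proposition~\ref{epicspl} together with the splitting decomposition of Theorem~\ref{splitdir}.

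For (i), I would first use the ontoness of $f_2$ to lift any $b_1\in\mathcal{P}_1$ to a pair $(b_1,b_2)\in\mathcal{P}$ with $f_2(b_2)=f_1(b_1)$, giving ontoness of $\pi_1^{\operatorname{res}}$. The map on congruence kernels is obtained by sending $\bigl((b_1,b_2),(b_1,b_2')\bigr)\in\ker_N\pi_1^{\operatorname{res}}$ to $(b_2,b_2')$; well-definedness comes from $f_2(b_2)=f_1(b_1)=f_2(b_2')$, and surjectivity onto $\ker_N f_2$ follows by picking any $f_1$-preimage of the common value $f_2(b_2)=f_2(b_2')$ via ontoness of $f_1$. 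Part (ii) is immediate from the pullback identity $f_1(b_1)=f_2(b_2)$ defining $\mathcal{P}$.

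For (iii) and (iv) the same assignment is the natural candidate, but now I must confirm that it respects the nullity side-condition built into $\ker_{N,\preceq}$ and that the resulting map is a $\preceq$-quasi-isomorphism, not merely a set-theoretic surjection. Starting from $\bigl((b_1,b_2),(b_1,b_2')\bigr)\in\ker_{N,\preceq}\pi_1^{\operatorname{res}}$, nullity of $b_1$ combined with the $\preceq$-morphism property of $f_1$ gives $f_2(b_2)=f_1(b_1)\succeq\zero$, placing the image in $\ker_{N,\preceq}f_2$. The converse direction is the principal obstacle: given $(b_2,b_2')\in\ker_{N,\preceq}f_2$, one must lift the common null value $f_2(b_2)$ back to a null $b_1\in\mathcal{P}_1$; this is precisely where I expect to have to invoke the weaker ``$\preceq$-onto'' character of the kernel map and the formal machinery of quasi-isomorphism rather than exact isomorphism. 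Part (iv) then specializes the same analysis, with null-monicity of $f_1$ upgrading the $f_1\pi_1^{\operatorname{res}}$-side nullity to $\pi_1^{\operatorname{res}}$-side nullity so that the kernels of $f_1\pi_1^{\operatorname{res}}$ and $f_2$ match on the nose modulo $\preceq$.

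For (v), since $\pi_1^{\operatorname{res}}:\mathcal{P}\to\mathcal{P}_1$ is onto by (i), projectivity of $\mathcal{P}_1$ lifts $\one_{\mathcal{P}_1}$ to a section $\nu_1:\mathcal{P}_1\to\mathcal{P}$, exhibiting $\mathcal{P}_1$ as a retract in the usual way. For (vi), the same argument is carried out in the systemic setting: the characterization in Proposition~\ref{epicspl} applied to the $\preceq$-onto homomorphism $\pi_1^{\operatorname{res}}$ produces a $\preceq$-splitting $\nu_1$, making $\mathcal{P}_1$ a $\preceq$-retract of $\mathcal{P}$. Theorem~\ref{splitdir}(i) then applies verbatim with $\pi=\pi_1^{\operatorname{res}}$ and $\nu=\nu_1$, yielding the asserted $\preceq$-direct sum decomposition $\mathcal{P}=\mathcal{P}_1\oplus_\preceq(\one_{\mathcal{P}}(-)\nu_1\pi_1)(\mathcal{P})$ without further work.
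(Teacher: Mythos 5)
Your proposal follows the same route as the paper — the same pullback $\mathcal P$, the same kernel maps, the same appeal to projectivity and to Theorem~\ref{splitdir}. Parts (i), (ii), (v), (vi) match the paper's argument essentially line for line, and your use of Proposition~\ref{epicspl} in (vi) is legitimate since $\preceq$-projective implies $(\preceq,h)$-projective, so the implication $(i)\Rightarrow(ii)$ there applies.

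The weak point is (iii), and you are right to sense trouble there, but your plan stops precisely where the hard work should begin. To prove $\preceq$-ontoness of $\pi_{N,\preceq}\colon\ker_{N,\preceq}\pi_1^{\operatorname{res}}\to\ker_{N,\preceq}f_2$, one needs, for a given $(b_2,b_2')$ with $f_2(b_2)=f_2(b_2')\in\mathcal M_{\operatorname{Null}}$, a single $b_1\in\mathcal P_1$ with $f_1(b_1)=f_2(b_2)$ \emph{and} $b_1\in(\mathcal P_1)_{\operatorname{Null}}$: by Definition~\ref{congker}(ii), $\ker_{N,\preceq}\pi_1^{\operatorname{res}}$ imposes the nullity condition on $\pi_1^{\operatorname{res}}(b_1,b_2)=b_1$ itself, not on $f_1(b_1)$. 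Mere ontoness of $f_1$ guarantees only $f_1(b_1)\in\mathcal M_{\operatorname{Null}}$, which is weaker; passing from that to $b_1\succeq\zero$ is exactly null-monicity of $f_1$ (Lemma~\ref{kertriv}), the hypothesis the paper adds only in (iv). You gesture at ``invoking the $\preceq$-onto character of the kernel map and the formal machinery of quasi-isomorphism,'' but that does not close the step; neither, it must be said, does the paper's own proof of (iii), which makes the same choice of $b_1$ via ontoness of $f_1$ and simply asserts that $((b_1,b_2),(b_1,b_2'))\in\ker_{N,\preceq}\pi_1^{\operatorname{res}}$ without justifying $b_1\succeq\zero$. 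Your description of (iv) is also slightly off: null-monicity of $f_1$ is needed to show the induced map $\pi_\preceq$ is itself null-monic (from $b_2,b_2'\succeq\zero$ one gets $f_1(b_1)=f_2(b_2)\succeq\zero$ and then $b_1\succeq\zero$), whereas ontoness of $\pi_\preceq$ in (iv) comes for free — the nullity requirement in $\ker_{N,\preceq}f_1\pi_1^{\operatorname{res}}$ falls on $f_1(b_1)$ rather than on $b_1$, and that is automatic from $f_2(b_2)$ being null, so the obstruction that plagues (iii) simply does not arise there.
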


We also have a $\preceq$-onto $\preceq$-version, given in
Lemma~\ref{trSh11}.

 \begin{nothmf}[Theorem~\ref{trSh119} (Semi-Schanuel,
    $\preceq$-version)]
    Given a $\preceq$-morphism $\mathcal P   \overset {f }
\longrightarrow \mathcal M$ and a homomorphism $\mathcal P'
\overset {f'} \longrightarrow \mathcal M'$  with $\mathcal P$ and
$\mathcal P'$ $\preceq$-projective and $f$  $\preceq$-onto,
    and a $\preceq$-onto $\preceq$-morphism $\mu: \mathcal M \to \mathcal
    M'$,
let $\mathcal K =  \ker_{\Mod,\mathcal P}f$ and  $\mathcal K' =
\ker_{\Mod,\mathcal P'} f'.$ Then there is a $\preceq$-onto
$\preceq$-splitting $\preceq$-morphism $g: \mathcal K' \oplus
\mathcal P \to \mathcal P'$, with a $\preceq$-morphism $\Phi:
\mathcal K \to \ker_{\Mod,\mathcal K' \oplus \mathcal P}g $ which is
1:1 (as a set-map).
\end{nothmf}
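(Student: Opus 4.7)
The plan is to adapt the classical Schanuel pullback construction to the systemic setting, with $\preceq$-projectivity replacing projectivity and the surpassing relation replacing equality. The starting move is to use $\preceq$-projectivity of $\mathcal P$: since $\mu f : \mathcal P \to \mathcal M'$ is a $\preceq$-morphism and $f' : \mathcal P' \to \mathcal M'$ is a $\preceq$-onto homomorphism, Proposition~\ref{epicspl}(iv) applied to $\mathcal P$ produces a $\preceq$-morphism $h: \mathcal P \to \mathcal P'$ lifting $\mu f$ through $f'$, i.e.\ with $f'h \preceq \mu f$.

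With this $h$ in hand, I would define $g: \mathcal K' \oplus \mathcal P \to \mathcal P'$ by $g(k', p) = k' + h(p)$; it is automatically a $\preceq$-morphism. For $\preceq$-ontoness, take $p' \in \mathcal P'$; using $\preceq$-ontoness of the composition $\mu f$ pick $p \in \mathcal P$ with $\mu f(p) \succeq f'(p')$, and set $k' := p' (-) h(p)$. Then $f'(k')$ is $\preceq$-surpassed by the quasi-zero $f'(p') (-) \mu f(p)$, so $k' \in \mathcal K'$, and $g(k',p) = p'(-)h(p) + h(p) \succeq p'$ via the identity $h(p)(-)h(p) \succeq \zero$. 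The $\preceq$-splitting of $g$ then follows from Proposition~\ref{epicspl}(ii) applied to $\mathcal P'$: the $\preceq$-onto $\preceq$-morphism $g$ admits a $\preceq$-section because its codomain $\mathcal P'$ is $\preceq$-projective.

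Finally, I would define $\Phi: \mathcal K \to \mathcal K' \oplus \mathcal P$ by $\Phi(k) := ((-)h(k),\, k)$. For $k \in \mathcal K$ one has $f(k) \succeq \zero$, hence $\mu f(k) \succeq \zero$, and the relation $f'h(k) \preceq \mu f(k)$ forces $f'((-)h(k))$ into the null-module; this places $(-)h(k) \in \mathcal K'$ and so $\Phi(k) \in \mathcal K' \oplus \mathcal P$. Moreover $g(\Phi(k)) = (-)h(k) + h(k) \succeq \zero$ puts $\Phi(k)$ in $\ker_{\Mod, \mathcal K' \oplus \mathcal P}\, g$, and $\Phi$ is 1:1 because the second coordinate recovers $k$.

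The main obstacle, present at every step, is the careful bookkeeping of $\preceq$, $\succeq$ and the negation map $(-)$. Equalities used in the classical Schanuel argument only hold up to the surpassing relation here, so each claim that an element lies in $\mathcal K'$ or in $\ker_{\Mod} g$ must be justified by invoking the standard closure properties of the null-module (closed under $(-)$, under addition of quasi-zeros, and under being $\preceq$-surpassed), and one must track the direction of $\preceq$ versus $\succeq$ at each invocation.
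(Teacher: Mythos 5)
Your overall strategy is the paper's: lift $\mu f$ through $f'$ to $h\colon\mathcal P\to\mathcal P'$, define $g$ on $\mathcal K'\oplus\mathcal P$ combining $h$ with the injection of $\mathcal K'$, show $g$ is $\preceq$-onto (hence $\preceq$-splits by $\preceq$-projectivity of $\mathcal P'$), and define $\Phi(k)=(\pm h(k),k)$. Your sign conventions for $g$ and $\Phi$ differ cosmetically from the paper's ($g(k',p)=k'+h(p)$ vs.\ $g(b',b)=h(b)(-)b'$), but that is immaterial.

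However, there is a genuine error in the direction of the lifting inequality, and it propagates. By Definition~\ref{precproj}(ii), a $\preceq$-lift $h$ of $\mu f$ through $f'$ satisfies $\mu f\preceq f'h$, i.e.\ $f'h\succeq\mu f$; you wrote $f'h\preceq\mu f$. With your direction the two crucial membership claims collapse. In the $\preceq$-ontoness step you compute $f'(k')=f'(p')(-)f'h(p)$ and argue that it is $\preceq$-surpassed by $f'(p')(-)\mu f(p)\succeq\zero$; but being $\preceq$-below an element of $\mathcal N_{\operatorname{Null}}$ does \emph{not} put an element in $\mathcal N_{\operatorname{Null}}$ (only $\succeq$-comparisons do, via Definition~\ref{precedeq07}(iii)). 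The same issue recurs in the verification that $(-)h(k)\in\mathcal K'$: from $f'h(k)\preceq\mu f(k)\succeq\zero$ one cannot conclude $f'h(k)\succeq\zero$. With the correct direction $f'h\succeq\mu f$, one gets $f'(k')=f'(p')(-)f'h(p)\succeq f'(p')(-)\mu f(p)\succeq\zero$ and $f'h(k)\succeq\mu f(k)\succeq\zero$, and everything goes through.

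A secondary slip: to obtain the $\preceq$-splitting of $g$ you invoke Proposition~\ref{epicspl}(ii), but that clause applies only to $\preceq$-onto \emph{homomorphisms}, and $g$ is merely a $\preceq$-morphism (since $h$ is). The correct reference is Remark~\ref{remark: for later use}, which gives $\preceq$-splitting for arbitrary $\preceq$-onto $\preceq$-morphisms into a $\preceq$-projective module.
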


This could be used in conjunction with Theorem~\ref{Sch2}.

Our approach throughout this paper is explicit, aimed to show how
projective systemic modules work, especially since subtle variations
of the definitions lead to differing results. The category of
$\tT$-modules has enough $(\preceq,h)$-projectives and
$h-$projectives (because every free module is also projective,
$(\preceq,h)$-projective, and h-projective) to define
$\preceq$-projective resolutions, but despite Theorem~\ref{Sch2} we
do not yet have decisive enough results along these lines to include
here (although Corollary~\ref{Sch29} indicates how the theory might
develop).
%
%
%
%

\par\medskip
\textbf{Acknowledgments} J.J. was supported by AMS-Simons travel grant. K.M. was supported by the Institute Mittag-Leffler and the ``Vergstiftelsen''. K.M. would like to thank the Institute Mittag-Leffler for its hospitality. Part of this work has been carried out during the workshop ``Workshop on Tropical varieties and amoebas in higher dimension'' in which K.M and L.R. participated.

%
%
%
%
%
%
%
%
%
%
%
%

\section{Basic notions}

 Throughout the
paper, we let $\mathbb{N}$ be the additive monoid of the non-negative integers. Similarly, we view $\mathbb{Q}$ (resp.~$\mathbb{R}$) as the additive monoid of the rational numbers (resp.~of the real numbers).

A \textbf{semiring}  $(\mathcal A, +, \cdot, 1)$ is an additive
commutative monoid $(\mathcal A, +, \zero)$ and multiplicative
monoid $(\mathcal A, \cdot, \one)$ satisfying the usual distributive
laws, cf.~\cite{golan92}.

\begin{rem}
Strictly speaking the element $\zero$ is not needed in semiring
theory, and one can make do later by adjoining the absorbing
element~$\zero$, but for convenience we will work with semirings and
assume $\zero \in \mathcal A.$
\end{rem}

We review the basic definitions and properties of $\tT$-modules,
triples,  and systems from \cite{Row17}; more details are given in
\cite{JuR1} and \cite{Row16}.

\begin{defn}\label{modu2}   A $\tT$-\textbf{module} over a set $ \tT$
 is an additive monoid $( \mathcal A,+,\zero_\mathcal A)$ with a scalar
multiplication $\tT\times \mathcal A \to \mathcal A$ satisfying the
following axioms, $\forall u \in \Net,$ $a \in \tT,$ $b,b_j \in
\mathcal A$:

 \begin{enumerate}\label{distr3}\eroman
   \item (Distributivity over $\tT$): $a (\sum _{j=1}^u b_j) =     \sum _{j=1}^u (a b_j). $
\item  $a\zero_{\mathcal A }= \zero_{\mathcal A }$.
\end{enumerate}
\end{defn}



We review some definitions for convenience. We start off with  a
$\tT$-module $\mathcal A$, perhaps with extra structure. When $\tT$
is a monoid we call $\mathcal A$ a $\tT$-\textbf{monoid module}. We
can make $\mathcal A$ into a semiring by means of
\cite[Theorem~2.5]{Row16}, in which case we essentially have
Lorscheid's blueprints, \cite{Lor1,Lor2}.

\subsection{Negation maps}

We introduce some more structure.

\begin{defn}\label{negmap}
 A \textbf{negation map} on  a $\tT$-module $\mathcal A$
is a monoid isomorphism
$(-) :\mathcal A \to \mathcal A$ of order~$\le 2,$  written
$a\mapsto (-)a$, which also
 respects the $\tT$-action in the sense that
$$(-)(ab) = a ((-)b),$$ for $a \in \tT,$ $b \in \mathcal A.$ \end{defn}

Assortments of negation maps are given in \cite{GatR,JuR1,Row16}. We
also remark that when $\one \in \tT  \subseteq \mathcal A$,   the
negation map $(-)$ is given by $(-)b = ((-)\one)b$ for $b \in
\mathcal A.$

We write $ a (-)a $ for $a+ ((-)a)$, and $ a ^\circ$ for $ a (-)a$,
called a \textbf{quasi-zero}.

\begin{rem}
Any quasi-zero is fixed by a negation map since $ (-) a ^\circ= (-)a
+a  =   a ^\circ.$ On the other hand, when $\mathcal A$ is
idempotent (i.e., $a+a=a$ for any $a \in \mathcal A$), any element
$a \in \mathcal A$  fixed by a negation map is a quasi-zero
since $a=(-)a$ and hence $a^\circ=a(-)a=a+a=a$.
\end{rem}

 The set $\mathcal A
^\circ$ of quasi-zeros is a $\tT$-submodule of $\mathcal A $ that
plays an important role. When $\mathcal A$ is a semiring,  $\mathcal
 A^\circ$ is an ideal.

\begin{defn}\label{modu21}
A  \textbf{pseudo-triple} $(\mathcal A, \tT, (-))$ is a $\tT$-module
$\mathcal A$, with $\tT$  a distinguished subset of $\mathcal A$,
called the set of \textbf{tangible elements}, and a negation map
$(-)$ satisfying $(-)\tT = \tT.$
\end{defn}

In this paper, we replace $\tT$ by a subset $\tT_{\mathcal
A}\subseteq \mathcal
 A$. We write $\tTz$ for $\tT \cup
\{\zero\}.$

\begin{defn} A \textbf{triple}
$(\mathcal A, \tT, (-))$
 is a pseudo-triple,
for which $\tT \cap \mathcal A^\circ = \emptyset$ and $\tTz$
generates $(\mathcal A ,+).$
\end{defn}

\subsection{Symmetrization and idempotent mathematics}\label{symm}$ $

When a $\tT$-module $\mathcal A$ does not come equipped with a
negation map, there are two natural ways to impose a negation map:
(1) one may take the negation $(-)$ to be the identity (for
instance, this is done in supertropical algebra), or  (2) one may
supply a negation map by ``symmetrizing''~$\mathcal A$, in a
procedure similar to the Grothendieck group completion. For more
details, see \cite{Ga} and then \cite[\S 1.3]{JuR1}. Symmetrization
is an important tool for idempotent mathematics and the max-plus
algebra, and
 plays a central role in our subsequent work~\cite{JMR1}. We briefly recall the basic definitions for the reader. 

For any $\mathcal{T}$-module $\mathcal{A}$, we let
$\widehat{\mathcal A} = \mathcal A \oplus \mathcal A$ and $\widehat
{\tT} = (\tT\oplus \zero) \cup (\zero\oplus \tT)$. The main idea, as
in the case of the group completion, is to consider the formal
construction of negation, and impose a canonical $\widehat{\mathcal
T}$-module structure on $\widehat{\mathcal A}$ as follows.

\begin{defn}\label{wideh7}
The \textbf{twist action} on $\widehat{\mathcal A}$ over $\widehat
{\tT} $
 is defined as follows:
 \begin{equation}\label{twi} (a_0,a_1)\ctw (b_0,b_1) =
 (a_0b_0 + a_1 b_1, a_0 b_1 + a_1 b_0), a_i \in \mathcal T, b_i \in \mathcal A.  \end{equation}

The \textbf{symmetrization} of $\mathcal A$ is the
$\widehat{\mathcal T}$-module $\widehat{\mathcal A}$ with the twist
action \eqref{twi}. A negation map is defined by using the \textbf{switch map}:
 \[
(-): \widehat{\mathcal A}\longrightarrow \widehat{\mathcal A}, \quad (b_0,b_1)\mapsto (b_1,b_0).
  \]
When $\mathcal A$ is a semiring, the twist action gives a semiring structure on $\widehat{\mathcal A}$ (together with coordinate-wise addition).
\end{defn}

\begin{rem}\label{sw} The switch map $(-)$ on $\widehat{\mathcal A}$ is a
negation map, and the quasi-zeros all have the form $(b,b)$ since
$(b_0,b_1)(-)(b_0,b_1)= (b_0+b_1,b_0+b_1).$

Suppose now that $\tT \subseteq \mathcal A$. (One could even take
 $\tT = \mathcal A\setminus {\zero}$.) Then $\mathcal
T_{\widehat{\mathcal A}}\cap {\widehat{\mathcal A}}^\circ =
\emptyset,$ and  $(\widehat{\mathcal A}, \mathcal
T_{\widehat{\mathcal A}},(-))$ is a triple for any $\tT$-module
$(\mathcal A, \tT, (-))$.

 The map $(\mathcal A,\tT,(-))\to (\widehat{\mathcal
A},\widehat{\tT},(-))$ sending $a \mapsto (a,\zero)$ and $b \mapsto
(b,\zero)$ for $a \in \tT$, $b \in \mathcal A,$ is a homomorphism of
triples. When $\mathcal A$ is additively idempotent, so is
$\widehat{\mathcal A}$. In this way, we   embed idempotent
mathematics into the theory of triples.

Thinking of $(b_0,b_1)$ intuitively as $b_0 - b_1$, we see that
$(b_1,b_0)$ corresponds to $b_1-b_0 = -(b_0 - b_1).$
\end{rem}

\subsection{Systems}$ $

 We round out the structure  with a \textbf{surpassing relation}
$\preceq$ given in \cite[Definition~1.70]{Row16} and also  described
in  \cite[Definition~3.11]{JuR1}.

\begin{defn}\label{precedeq07}
A \textbf{surpassing relation} on a triple $(\mathcal A, \tT, (-))$,
denoted
  $\preceq $, is a partial pre-order satisfying the following, for elements of $\mathcal A$:

  \begin{enumerate}
 \eroman
  \item   $c^\circ \succeq \zero$ for any $c\in \mathcal A$.
    \item If $b_1 \preceq b_2$ then $(-)b_1 \preceq (-)b_2$.
  \item If $b_1 \preceq b_2$ and $b_1' \preceq b_2'$ for $i= 1,2$ then  $b_1 + b_1' \preceq b_2
   + b_2'.$
    \item   If  $a \in \tT$ and $b_1 \preceq b_2$ then $a b_1 \preceq ab_2.$
    \item   If  $a\preceq b $ for $a,b \in \tT,$ then $a =  b.$
   \end{enumerate}


A \textbf{$\tT$-surpassing relation} on a triple $\mathcal A$  is a
surpassing relation also satisfying the following, for elements of
$\mathcal A$: if $b  \preceq a $ for $a \in \tT$ and $b \in \mathcal
A$, then $b=a$.

\end{defn}

   \begin{lem}\label{circget7}
  If $b_1 + c^\circ = b$ for some $c\in \mathcal A$, then $b_1 \preceq b$.
\end{lem}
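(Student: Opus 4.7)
The plan is to combine reflexivity of the pre-order $\preceq$ with axioms (i) and (iii) of Definition~\ref{precedeq07}. The hypothesis $b_1 + c^\circ = b$ suggests we should produce $b_1 \preceq b_1 + c^\circ$ directly, using the fact that $c^\circ$ is ``nonnegative'' with respect to $\preceq$.

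Concretely, first I would invoke reflexivity: since $\preceq$ is a partial pre-order, $b_1 \preceq b_1$. Next, axiom (i) of the surpassing relation gives $c^\circ \succeq \zero$, i.e., $\zero \preceq c^\circ$. Then axiom (iii) (compatibility of $\preceq$ with addition) applied to the pair $b_1 \preceq b_1$ and $\zero \preceq c^\circ$ yields
\[
b_1 + \zero \;\preceq\; b_1 + c^\circ.
\]
Since $\zero$ is the additive identity of the monoid $(\mathcal A, +, \zero_{\mathcal A})$, the left side equals $b_1$, and by hypothesis the right side equals $b$, so $b_1 \preceq b$.

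There is essentially no obstacle here: the argument is a one-line combination of the ordered-monoid axioms for $\preceq$ with the defining property of quasi-zeros. The only subtlety worth flagging is confirming that axioms (i) and (iii) suffice without needing the $\tT$-surpassing strengthening or property (iv), which they do since we only add a quasi-zero and do not multiply by tangibles.
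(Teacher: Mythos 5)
Your proof is correct and is essentially the same argument the paper gives, just spelled out more explicitly: the paper's one-line proof also invokes Definition~\ref{precedeq07}(i) to get $c^\circ \succeq \zero$ and then applies (iii), leaving reflexivity and the identity $b_1 + \zero = b_1$ implicit. Your careful note that only axioms (i) and (iii) are needed is accurate.
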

\begin{proof} Since $c^\circ \succeq \zero,$ we can apply   Definition~\ref{precedeq07} (iii).
\end{proof}

The main case is in \cite[Definition~1.70]{Row16},
\cite[Definition~2.17]{JuR1}, defined as follows:
 \begin{equation}\label{preci} a_1\preceq_\circ a_2\text{ if }a_2 = a_1+ b^\circ\text{ for
some } b \in \mathcal A,\end{equation} but we also could take
$\preceq$ to be set inclusion when $\mathcal{A}$ is obtained from
the power set of a hyperring. See \cite[\S 10]{JuR1}.

\begin{lem}\label{circget} If $a_1\preceq a_2$, then $a_2 (-) a_1  \succeq \zero$ and  $a_1 (-) a_2 \succeq
\zero.$
\end{lem}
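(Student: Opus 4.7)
The plan is to derive both inequalities directly from axioms (i)--(iii) of Definition~\ref{precedeq07} together with reflexivity and transitivity of the partial pre-order $\preceq$. The guiding idea is that adding a suitable term to both sides of the hypothesis $a_1 \preceq a_2$ produces a quasi-zero on one side, and every quasi-zero surpasses $\zero$ by axiom (i), so transitivity finishes the job.

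First I would handle $a_2(-)a_1 \succeq \zero$. Reflexivity of $\preceq$ gives $(-)a_1 \preceq (-)a_1$, and combining this with the hypothesis $a_1 \preceq a_2$ via axiom (iii) yields
\[
a_1 + ((-)a_1) \;\preceq\; a_2 + ((-)a_1), \qquad \text{i.e.,} \qquad a_1^\circ \;\preceq\; a_2(-)a_1.
\]
Axiom (i) provides $\zero \preceq a_1^\circ$, and transitivity of the pre-order delivers $\zero \preceq a_2(-)a_1$, which is the claim.

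For the second inequality $a_1(-)a_2 \succeq \zero$, I would first invoke axiom (ii) on the hypothesis to get $(-)a_1 \preceq (-)a_2$, and then add the reflexive relation $a_1 \preceq a_1$ to both sides via axiom (iii):
\[
a_1^\circ \;=\; a_1 + ((-)a_1) \;\preceq\; a_1 + ((-)a_2) \;=\; a_1(-)a_2.
\]
Axiom (i) and transitivity conclude as before. (Once $a_1^\circ \preceq a_1(-)a_2$ has been established one could also appeal directly to Lemma~\ref{circget7}, but chaining the axioms is shorter.)

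No significant obstacle is anticipated; the argument is a two-step manipulation in each direction. The only conceptual point is recognizing that in both cases the quasi-zero $a_1^\circ$ is the natural intermediate witness that links $\zero$ to the compound expression on the right-hand side.
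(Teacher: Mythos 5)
Your proof is correct and follows essentially the same idea as the paper. For $a_2(-)a_1\succeq\zero$ you both add $(-)a_1$ to the hypothesis via axiom~(iii) and compare with $a_1^\circ\succeq\zero$. For the second inequality the paper simply writes $a_1(-)a_2=(-)(a_2(-)a_1)\succeq\zero$, which tacitly uses that quasi-zeros are fixed by $(-)$ (so that negating $a_1^\circ\preceq a_2(-)a_1$ via axiom~(ii) still gives $a_1^\circ$ on the left); you instead apply axiom~(ii) to the hypothesis and then add $a_1$, making that step explicit rather than relying on the negation of the first inequality. Both routes are equally short and the distinction is cosmetic.
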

\begin{proof} $a_2 (-) a_1  \succeq a_1 (-) a_1  \succeq \zero$, and
thus $a_1 (-) a_2  = (-) (a_2 (-) a_1 )\succeq \zero$.
\end{proof}

 \begin{defn} $S_1\preceq S_2 $ for $S_1, S_2 \subseteq \mathcal A $
 if for each $s \in S_1$ there is $s' \in S_2$ for which  $s \preceq
 s'$.
\end{defn}

 \begin{defn}\label{Tsyst} A \textbf{system} (resp.~\textbf{pseudo-system}) is a quadruple $(\mathcal A, \tT ,
(-), \preceq),$ where $\preceq$ is a surpassing relation  on the
triple
  (resp.~ pseudo-triple) $(\mathcal A, \tT_{\mathcal A}  , (-))$,
  which
 is \textbf{uniquely negated} in the sense that
for any $a \in \tT$,  there is a unique element $b$ of $\tT_\mathcal
A$ for which $\zero \preceq a+b$ (namely $b = (-)a$).


A $\tT$-\textbf{system} is a system for which $\preceq$ is a
$\tT$-surpassing relation.
\end{defn}

\begin{rem}\label{sysgene}
Pseudo-systems encompass classical algebra, when we take $(-)$ to be
the usual negation~$-$, and $\preceq$ to be equality. This
``explains'' the parallel between so many theorems of tropical
algebra and classical algebra.
\end{rem}
%
%
%
%

For a pseudo-system $(\mathcal A, \tT , (-), \preceq),$ we define
the important $\tT$-submodule ${\mathcal A}_{\operatorname{Null}} =
\{a \in \mathcal A : a \succeq \zero\}$ of~${\mathcal A}$
containing~$\mathcal A ^\circ$.

Then in parallel to \eqref{preci} we have
\begin{defn}\label{precnul}   $b
\preceq_{\operatorname{Null}} b'$ when $b + c = b'$ for some $c \in
\mathcal A_{\operatorname{Null}}.$
\end{defn}
%

There are two ways that we want to view triples and their systems.
The first is as the ground structure on which we build our module
theory, in analogy to the ground ring for classical linear algebra
or for affine algebraic geometry. We call this a \textbf{ground
system}. We call $\mathcal A$ a \textbf{semiring
    system} when $\mathcal A$ is a semiring.

The second way, which is the main direction taken in this paper, is
to fix a ground triple $(\mathcal A, \tT , (-))$, and take $\mathcal
A$-modules $\mathcal M$ together with a distinguished subset
$\tT_{\mathcal M}$ spanning $\mathcal M$ and satisfying $\tT
\tT_{\mathcal M}\subseteq \tT_{\mathcal M}.$  We also require   $
\mathcal M$  to satisfy $((-)a)m = (-)(am)$ for $a \in \mathcal A, \
m \in \mathcal M.$ Then we define the \textbf{systemic module}
$(\mathcal M, \tT_{\mathcal M}, (-),\preceq)$ on $\mathcal M,$ to
satisfy the axiom

\medskip

 $ a_1 b_1 \preceq a_2b_2$ whenever $a_1 \preceq a_2$ in
$\tT_{\mathcal M}$ and $b_1 \preceq
 b_2$ in $\mathcal M$.

\medskip

    Right systemic modules are defined analogously. The detailed
study of such modules was carried out in \cite{JuR1}. Although the
two theories (ground systems and systemic modules) start off the
same, they quickly diverge, just as in classical algebra.

\begin{example}\label{precmain}$ $ $ $ \begin{enumerate}\eroman\item
Given a triple $(\mathcal A, \tT  , (-))$, take the surpassing
relation $\preceq$ to be $\preceq_\circ$ of \eqref{preci}; then
${\mathcal A}_{\operatorname{Null}} = \mathcal A^\circ$.

\item The set-up
of supertropical mathematics \cite{zur05TropicalAlgebra,IR} is a
special case of (i), where $\mathcal A = \tT \cup \tG$ is the
supertropical semiring, $(-)$ is the identity, $\circ$ is the
``ghost map,''  $ \tG =\mathcal A^\circ  $, and $\preceq$ is ``ghost
surpasses''. Another way of saying this is that $a_0 + a_1 \in
\{a_0,a_1\}$ for $a_0 \ne a_1 \in \tT$, and $a_0 + a_0 = a_0
^\circ.$ Tropical mathematics is encoded in $\tG$, which (excluding
$\zero$) often is an ordered group, and can be viewed for example as
the target of the Puiseux valuation (tropicalization).

\item The
fuzzy ring of \cite{Dr} is a special case of (i). More details are
given in \cite{Row16} and \cite{AGR}.

\item The
symmetrized triple can be made into  a system as special case of
(i), which includes idempotent mathematics, as was  explained in
Remark~\ref{sw}.

\item In the hypergroup setting, as described in
\cite[Definition~3.47]{Row16}, $\tT$ is a given hypergroup,
${\mathcal A}$ is the subset of the power set $\mathcal P(\tT)$
generated by $\tT$, and $\preceq$ is set inclusion. We call this a
\textbf{hypersystem}. ${\mathcal
    A}_{\operatorname{Null}}$ consists of those sets containing $\zero$,
which is the set of hyperzeros     in the hypergroup literature.

\item Tracts, introduced recently in \cite{BB1}, are mostly special cases of systems, where $\tT$ is the given Abelian group $ G$, $\mathcal A
= \mathbb N[G],$ $\vep = (-) \one,$ and $N_G $ is ${\mathcal
A}_{\operatorname{Null}},$ usually taken to be $\mathcal A^\circ$.
\end{enumerate}
\end{example}

 Examples \ref{precmain} can be unified for systems by taking $\preceq$ to be $\preceq_{\operatorname{Null}}$ of Definition \ref{precnul}.  Clearly this includes Example \ref{precmain}(i), and it also
includes Example \ref{precmain}(v) since $c \in \mathcal
A_{\operatorname{Null}}$ iff $\zero \in c,$ which implies $b
\subseteq b+c,$ for   $b,c \subseteq \mathcal P(\tT)$. We will also
want a weaker version of generation, which comes up naturally and
also ties into hyperrings.

\begin{rem} In a semiring, one has
the Green  relation given by $a \le b$ iff $a+b = b$,
\cite[Example~2.60(i)]{Row16}. Conversely, any ordered monoid with
$\zero$ gives rise to an idempotent semiring by putting $a+b = b$
whenever $a\le b.$

 The only natural
negation map here would be the identity, and one gets a
pseudo-triple by taking $\tT$ to be a generating set of $\mathcal
A$. But every element $a = a+a$ is a quasi-zero, and ${\mathcal
A}_{\operatorname{Null}}= \mathcal A$, so this pseudo-system is not
a system,  and   one does not get much structure theory along the
lines of systems. This is remedied in Example~\ref{precmain}(ii), by
symmetrization.

In the spirit of systems, our semirings will rather be
 ``almost'' idempotent (specifically
``$(-)$-bipotent,'' cf.~\cite[Definition 2.27]{Row16} and
\cite[Definition 2.8]{JuR1}).
\end{rem}

\begin{lem}\label{hyp7} Hypersystems   $({\mathcal A}= \mathcal P(\tT), \tT,
(-),\subseteq) $ satisfy the following property:

\medskip
If $a \in \tT$ and $a+b \succeq \zero$ for $b\in \mathcal A,$ then
$(-)a \preceq b.$
\end{lem}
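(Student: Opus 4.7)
The plan is to translate the statement through the hypersystem dictionary and then invoke the defining property of additive inverses in a hypergroup. First, I would unwind $\preceq$ and the null submodule: since $\preceq$ is set inclusion and $\mathcal{A}_{\operatorname{Null}}$ consists of those subsets of $\tT$ that contain the zero element $\zero$ of $\tT$, the hypothesis $a+b \succeq \zero$ reads as $\zero \in a + b$. Here $a+b$ is computed as $\bigcup_{b'\in b}(a+b')$, with each $a+b'$ the hyperaddition inside $\tT$.

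Next I would extract a pointwise witness: from $\zero \in a+b$, there must exist some $b'\in b$ with $\zero \in a+b'$. At this point I would invoke the uniqueness of negation in the hypergroup $\tT$, which is precisely the axiom characterizing the additive inverse: for any $x\in \tT$, the membership $\zero \in a+x$ forces $x = (-)a$. Applying this to $x = b'$ gives $b'=(-)a$, and hence $(-)a \in b$.

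Finally, since $(-)a$ is identified inside $\mathcal{A}=\mathcal{P}(\tT)$ with the singleton $\{(-)a\}$, the membership $(-)a \in b$ is precisely the inclusion $\{(-)a\} \subseteq b$, which is exactly $(-)a \preceq b$ in the surpassing relation of the hypersystem. I do not anticipate a genuine obstacle here; the content of the lemma is really just the uniqueness of hypergroup inverses, phrased in the language of systems, and the only care required is to keep the identification between tangible elements $a \in \tT$ and the singletons $\{a\}\in \mathcal{A}$ straight, along with the interpretation of $\zero \in \mathcal{A}$ as the singleton containing the zero of the hypergroup.
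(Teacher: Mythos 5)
Your proof is correct and follows essentially the same route as the paper's: translate $a+b \succeq \zero$ to $\zero \in a+b$, pull out a witness $b' \in b$ with $\zero \in a+b'$, invoke uniqueness of the hypergroup inverse to conclude $b' = (-)a$, and recast $(-)a \in b$ as the inclusion $\{(-)a\} \subseteq b$. You spell out the dictionary between tangible elements and singletons a bit more explicitly than the paper does, but the argument is the same.
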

\begin{proof}
$\zero \preceq a+b$ means that there exists $q \in b$ such that $\zero \in a+q$ (as a hypergroup). In particular, $q=-a$ and hence $(-a) \in b$, or $(-a) \preceq b$ since $\preceq$ is just the set-inclusion $\subseteq$ in this case.

\end{proof}

\begin{defn}\label{modu218}
Let $(\mathcal A, \tT,(-),\preceq)$ be a pseudo-system.
\begin{enumerate}\eroman
\item
An element  $b \in \mathcal A$ is \textbf{$\preceq$-generated} by a
subset $ \mathcal A'$ of  $\mathcal A$ if there is a subset $\{ a_i
: 1 \le i \le t\} \subseteq \mathcal A'$ such that  $b \preceq \sum
_i a_i.$
\item
For subsets $\mathcal A'$ and $\mathcal A''$ of $\mathcal A$, we say
that $\mathcal A'$ \textbf{$\preceq$-generates} $\mathcal A''$ if
each element of $\mathcal A''$ is $\preceq$-generated by $\mathcal
A'$.
\end{enumerate}
\end{defn}

 The $\succeq$-analog is less interesting because of the following
 reduction to usual generation.

\begin{lem}\label{monep7}
In a $\tT$-system $\mathcal A$, with $\mathcal A' \subseteq \mathcal
A$, if for each $b\in \mathcal A$ there is  $S_b = \{ a_i : 1 \le i
\le t\} \subseteq \mathcal A'$ such that $b \succeq \sum _i a_i,$
then $\mathcal A'$ generates $\mathcal A$ in the usual sense.
\end{lem}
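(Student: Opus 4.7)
The plan is to reduce the statement to the $\tT$-surpassing axiom via the fact that in a triple $\tTz$ additively generates $\mathcal A$. Since $(\mathcal A, \tT, (-))$ is a triple, any $b \in \mathcal A$ can be written as a finite sum
\[
b \;=\; t_1 + t_2 + \cdots + t_n, \qquad t_j \in \tTz.
\]
It therefore suffices to show that every $t \in \tT$ is itself an (ordinary, equality) sum of elements of $\mathcal A'$, since the zero summands can be dropped and sums concatenated.

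For each such tangible $t \in \tT$, apply the hypothesis to get $S_t = \{a_1, \dots, a_s\} \subseteq \mathcal A'$ with $t \succeq \sum_i a_i$, i.e.\ $\sum_i a_i \preceq t$. Now invoke the defining property of a $\tT$-\emph{surpassing} relation from Definition~\ref{precedeq07}: whenever $c \preceq a$ with $a \in \tT$ (and $c \in \mathcal A$), one has $c = a$. Applied to $c = \sum_i a_i$ and $a = t$, this yields the honest equality
\[
t \;=\; \sum_{i=1}^{s} a_i.
\]

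Substituting such an expression for each tangible $t_j$ in the decomposition of $b$, we obtain $b = \sum_{j}\sum_{i} a_{j,i}$ as a finite sum of elements of $\mathcal A'$, so $\mathcal A'$ generates $\mathcal A$ in the usual sense. The only ``step'' with any content is pulling the tangibility hypothesis out of the triple structure so that the $\tT$-surpassing axiom can be applied to upgrade $\succeq$ to equality; once that is set up the conclusion is immediate, which is exactly the remark the authors wish to make about why the $\succeq$-analog of $\preceq$-generation collapses.
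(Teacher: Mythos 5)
Your proof is correct and follows essentially the same route as the paper's: decompose $b$ as a sum of tangibles using the triple axiom that $\tTz$ generates $(\mathcal A,+)$, apply the hypothesis to each tangible summand, and invoke the $\tT$-surpassing property of Definition~\ref{precedeq07} to upgrade $\preceq$ to equality. Your write-up is simply a more explicit version of the paper's argument.
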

\begin{proof}
For $b \in \mathcal A$, write $b = \sum a_i$, where $a_i \in \tT$, and $\sum _j b_{i,j} \preceq a_i$
for $b_{i,j} \in \mathcal A'$, implying $\sum _j b_{i,j} = a_i$ by Definition \ref{precedeq07} for $\tT$-systems, and
thus $b = \sum _i \sum _j b_{i,j}.$
\end{proof}

  \begin{defn}\label{mor}
Let $\mathcal A$ be a system. A \textbf{$\preceq$-morphism}  of $\mathcal A$-module
    pseudo-systems $$\varphi:
(\mathcal M, \tT_{\mathcal M}, (-), \preceq)\to (\mathcal M',
\tT'_{\mathcal M'}, (-)', \preceq')$$ is a map $\varphi: \mathcal M
\to \mathcal M'$ satisfying the following
properties  for $a_i \in \tT$ and   $b\preceq b' $  in $\mathcal M$,
 $b_i$  in $\mathcal
M$:
\begin{enumerate}\eroman
\item
$\varphi (\zero ) = \zero  .$
\item $ \varphi((-)b_1)=   (-)
\varphi(b_1);$
\item $ \varphi(b_1 + b_2) \preceq ' \varphi(b_1) + \varphi( b_2) ;$
\item  $ \varphi(a_1 b)=  a_1  \varphi( b) $.
\item $ \varphi(b) \preceq ' \varphi(b').$
\end{enumerate}
 By a \textbf{homomorphism} we mean the usual universal
algebra definition, i.e.,  in (iii), equality holds  instead
of~$\preceq$.
   \end{defn}

In many cases (such as for hypersystems) we also want to include the
condition $\varphi(\tT) \subseteq \tT'$, but there are instances for
which we do not want this condition to hold (for example the zero
morphism $a \mapsto \zero$, the morphism $a \mapsto a^\circ$, or
more generally, null morphisms of Definition \ref{modker} below).

\begin{rem}\label{morprop} $ $  \begin{enumerate}\eroman\item   $\varphi ( \mathcal M _{\operatorname{Null}} )\subseteq  \mathcal
 {M'} _{\operatorname{Null}}$
follows from conditions (i) and (v), since $\zero \preceq b$ implies
$\zero = \varphi(\zero) \preceq' \varphi( b)$.

\item  To show condition (ii), it is enough to have $\varphi((-)a_1)\preceq'  (-)
\varphi(a_1),$ by \cite[Proposition 2.41]{JuR1}.

\item   $  \varphi(b  (-)
c)\succeq \varphi(b),$ for all $c \succeq \zero,$ in view of (v) and
Lemma~\ref{circget7}.

\item  There also is a subtle issue concerning $\preceq$-morphisms  of
systemic modules; we would want $ \varphi(\tT) \cap \tT_{\mathcal
M'}$ to generate $(\mathcal M',+),$ in order for $ \varphi(\mathcal
M)$ to be a systemic module. \end{enumerate}
\end{rem}

\begin{lem} When $\preceq '$ is a PO (partial order) and  $\tT$
is a group,  Definition \ref{mor}(iv)  is implied by the formally
weaker condition
$$ \varphi(a b) \preceq ' a  \varphi( b), \quad \forall a \in \tT.
$$
\end{lem}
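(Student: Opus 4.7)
The plan is to exploit invertibility in $\tT$ to upgrade the one-sided inequality to the reverse one, then appeal to antisymmetry of the partial order to conclude equality. So assume the weaker hypothesis $\varphi(ab) \preceq' a\varphi(b)$ for all $a \in \tT$ and $b \in \mathcal M$, and we wish to recover $\varphi(ab) = a\varphi(b)$.

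First, since $\tT$ is a group, the element $a^{-1} \in \tT$ exists. Apply the hypothesis with $a$ replaced by $a^{-1}$ and $b$ replaced by $ab$: this gives $\varphi(a^{-1}(ab)) \preceq' a^{-1}\varphi(ab)$, i.e., $\varphi(b) \preceq' a^{-1}\varphi(ab)$.

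Next, multiply both sides by $a \in \tT$. By Definition~\ref{precedeq07}(iv), scalar multiplication by a tangible element preserves the surpassing relation, so $a\varphi(b) \preceq' a(a^{-1}\varphi(ab)) = \varphi(ab)$. Combined with the hypothesis $\varphi(ab) \preceq' a\varphi(b)$, and using the assumption that $\preceq'$ is a partial order (in particular antisymmetric), we conclude $\varphi(ab) = a\varphi(b)$, as required.

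There is no real obstacle here; the only subtlety is that the argument genuinely needs $\tT$ to be a group (so that $a^{-1}$ is available as a tangible scalar acting on both sides) and genuinely needs $\preceq'$ to be antisymmetric rather than merely a pre-order. Without the group hypothesis one cannot reverse the inequality, and without antisymmetry one can only conclude $\varphi(ab) \preceq' a\varphi(b) \preceq' \varphi(ab)$, which falls short of equality.
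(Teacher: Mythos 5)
Your proof is correct and is essentially the same as the paper's: both apply the hypothesis to $a^{-1}$ and $ab$, multiply the resulting inequality by $a$ using Definition~\ref{precedeq07}(iv), and invoke antisymmetry of the partial order to close the chain $a\varphi(b) \preceq' \varphi(ab) \preceq' a\varphi(b)$. Your remarks about where the group hypothesis and the PO hypothesis are each used match the structure of the paper's one-line argument.
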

\begin{proof} $a \varphi( b) = a\varphi( a ^{-1} a b) \preceq '  a a^{-1} \varphi(a b) \preceq '  a  \varphi( b)
$, so equality holds at each stage.
\end{proof}

%

    Analogously, by a $\succeq$-\textbf{morphism} we use the same
definition as $\preceq$-morphism, except with (iii) now reading $$
\varphi(b_1 + b_2) \succeq ' \varphi(b_1) + \varphi( b_2) .$$

\begin{example}\label{morphmean}
Let us describe these notions for Example~\ref{precmain}; in the
process we see why we want to consider $\preceq$-morphisms rather
than just homomorphisms.
\begin{enumerate}\eroman\item  In supertropical
mathematics, a $\preceq$-morphism $f$ satisfies
\begin{equation}\label{mor1}
f(b_1+b_2)+\text{ghost} =
f(b_1)+f(b_2);
\end{equation}
\eqref{mor1} implies that either  $f(b_1+b_2)  = f(b_1)+f(b_2)$, or
$f(b_1)+f(b_2)$ is ghost, in which case either $f(b_1)= f(b_2)$, or
$f(b_1)$ is ghost of value greater than or equal to $f(b_2)$ (or
visa versa). In particular, this is the case for tropicalization of
the Puiseux series via the Puiseux valuation, and is one of our main
motivations in introducing $\preceq$-morphisms.
%

\item
For hypersystems, a  $\preceq$-morphism $f$ satisfies
\begin{equation}\label{morphmean3}
f(b_1\boxplus b_2)\subseteq f(b_1)\boxplus
f(b_2),
\end{equation}
the definition used in \cite[Definition~2.1]{CC} and
\cite[Definition~2.4]{GJL}.  This is intuitive when $f$ maps the
hyperring $\tT$ into itself. On the other hand, hyperring
$\succeq$-morphisms  which are not homomorphisms  seem to be
artificial; for an example, one could extend the identity on the
phase hyperfield to a map that doubles all non-singleton arcs around
the center.

Given  a hypersystem $({\mathcal A}= \mathcal P(\tT), \tT,
(-),\subseteq)$ and a  hypergroup morphism $f$ over $\tT$, it is
natural to extend
 $f$ to ${\mathcal A}$ via $$f( \{ a_i: a_i \in \tT\}) = \{ f(a_i): a_i \in
 \tT\}.$$
In this  case, if $f(b) (-) f(b') \succeq \zero,$ there is some
hypergroup element $a \in f(b) \cap f(b').$


\item
For fuzzy rings, in \cite[\S~1]{Dr}, also see \cite[Definition~2.17
]{GJL}, a homomorphism $$f: (K;+;\times, \vep _K,K_0)\to
(L;+;\times; \vep _L;L_0)$$ of fuzzy rings is defined as satisfying:
For any $\{a_1 ,\dots, a_n \} \in  K ^\times$  if $\sum _{i=1}^n a_i
\in K_0$ then $\sum _{i=1}^n f (a_i) \in L_0.$ Any
$\preceq$-morphism in our setting is a fuzzy homomorphism since
$L_0$ is an ideal, and thus $\sum _{i=1}^n f (a_i) \in f (\sum
_{i=1}^n a_i) + L_0 = L_0.$ The other direction might not hold. The
same reasoning holds for tracts of \cite{BB1}.
%
\item Another interesting example comes from valuation theory.
In \cite[Definition~8.8(ii)]{Row16}, valuations are displayed as
$\preceq$-morphisms of semirings, writing the target of the
valuation as a semiring (using multiplicative notation instead of
additive notation) via Green's relation of
Example~\ref{precmain}(viii).   Here $\varphi(b_1 b_2) =
\varphi(b_1) \varphi(b_2) .$ If we instead wrote $\varphi(b_1 b_2)
\preceq \varphi(b_1) \varphi(b_2) ,$ we would have a
quasi-valuation.
\end{enumerate}\end{example}

In conjunction with the hyperring theory, we are most interested in
$\preceq$-morphisms and homomorphisms, but at times 
we need the
restriction to homomorphisms. Occasional results can be formulated
for $\succeq$-morphisms, such as in Lemma \ref{lem: 3.14}(iii)
below.

\subsubsection{Direct sums and direct limits}$ $

The direct sum of $\tT$-modules, defined in the usual way, is
extended to pseudo-triples, \cite[\S 2.5.1]{JuR1}.

  \begin{defn}\label{cop}
    \begin{enumerate}\eroman
        \item
        The \textbf{direct sum} $\oplus _{i \in I}(\mathcal A _i , \tT_{ \mathcal A _i}, (-))$ of
         a family of pseudo-triples  over an index set $I$ (not necessarily finite) is defined as  $(\oplus \mathcal A _i, \tT_{\oplus \mathcal A _i},
        (-))$, where $\tT_{\oplus \mathcal A _i} = \cup   \tT_{ \mathcal A _i}$, viewed in $\oplus    \mathcal A _i$.
        \item
  The  \textbf{free} $\mathcal A$-module pseudo-triple $(\mathcal
  A^{(I)}, \tT_{\mathcal
  A^{(I)}}, (-))$ over a pseudo-triple $(\mathcal
  A,\tT,(-))$ is the direct sum of copies of $(\mathcal
  A,\tT,(-)).$
 \end{enumerate}
  If $(\mathcal A, \tT, (-),
  \preceq))$ is a system, we can extend  $\preceq$ componentwise to
  $\mathcal A^{(I)}$ to obtain the \textbf{free} $\mathcal A$-module
  system.
   \end{defn}

\begin{rem} When   $\preceq$ is   a PO on $ \mathcal A $, $\preceq$ is also  a PO on $ \mathcal A^{(I)}$,
seen componentwise. \end{rem}
%
%
%

\section{Systemic versions of basic module properties}

We want to find the systemic generalization of classical concepts of
module theory. As we shall see, this depends on which version we
use, i.e., the    switch negation map in the symmetrization given in
\S \ref{symm}, or taking a given surpassing negation map $(-)$ and
surpassing relation $\preceq$. These two different approaches give
rise to different theories.

\subsubsection{Notation}$ $

Let us fix some notation for the remainder of this paper. In what
follows, we let $\mathcal{A}= (\mathcal A, \tT, (-),
  \preceq)$ be a semiring system, and $\mathcal
M$ and $ \mathcal N$ always denote $\mathcal A$-systemic modules.
We write $\preceq$ generically for the appropriate surpassing PO in
a system. 

\subsection{Module theoretic notions}

 \begin{defn}\label{modker}
Let $\mathcal M$ and $\mathcal N$ be $\mathcal A$-systemic modules,
and $f: \mathcal M \to \mathcal N$ a $\preceq$-morphism.
\begin{enumerate} \eroman
    \item A submodule $\mathcal M'$ of $\mathcal M$ is   \textbf{$f$-null} if
$f(a) \in \mathcal N_{\operatorname{Null}}$ for all $a \in \mathcal
M'.$ The \textbf{null-module kernel} $\ker_{\Mod,\mathcal M} f$  of
$f$ is the sum of all $f$-null submodules of $\mathcal M$.
    \item
        A $\preceq$-morphism $f: \mathcal M\to \mathcal N$ is
    \textbf{null} if $f(\mathcal M)\subseteq \mathcal N_{\operatorname{Null}},$
    i.e., $\ker_{\Mod,\mathcal M} f = \mathcal M.$
\item
A $\preceq$-morphism $f$ is \textbf{null-monic}
(resp.~\textbf{null-epic}) when it satisfies the property that if
$fh$ is null (resp.~$hf$ is null) for a homomorphism $h$, then $h$
is null.

\item A $\preceq$-morphism $f$ is \textbf{N-monic}
  when it satisfies the property that
if $f(b) = f(b')$ for $b,b' \in \mathcal M$ then $b=b'$.
\end{enumerate}
 \end{defn}


\begin{rem}\begin{enumerate}\eroman
\item  $\mathcal{M}_{\operatorname{Null}}\subseteq \ker_{\Mod,\mathcal M} f
$, by Remark~\ref{morprop}.

 \item
    Being the sum of submodules of $\mathcal M$,  $\ker_{\Mod,\mathcal M} f  $ is a submodule
    of $\mathcal M$, which is $f$-null when $f$ is a homomorphism, but  need
    not be $f$-null when $f$ is just a $\preceq$-morphism. One could
    have $f(a_1)+ f(a_2) \succeq \zero$ whereas $f(a_1+a_2) \not \succeq
    \zero$.
\end{enumerate}
\end{rem}

 \begin{lem}\label{kertriv}    A $\preceq$-morphism   $f:  \mathcal{M} \to \mathcal N$ is null-monic if and only if the null-module kernel  of $f$
 is a subset of $\mathcal{M}_{\operatorname{Null}}.$
\end{lem}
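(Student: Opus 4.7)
The plan is to prove the two directions separately, using a standard reduction via cyclic homomorphisms from $\mathcal A$ to produce witnesses in the forward direction.

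For the ($\Leftarrow$) direction, suppose $\ker_{\Mod,\mathcal M} f \subseteq \mathcal{M}_{\operatorname{Null}}$, and let $h: \mathcal M'' \to \mathcal M$ be a homomorphism such that $fh$ is null. Then $f(h(b)) \in \mathcal N_{\operatorname{Null}}$ for every $b \in \mathcal M''$, which says that $h(\mathcal M'')$ is an $f$-null submodule of $\mathcal M$. By the definition of the null-module kernel, $h(\mathcal M'') \subseteq \ker_{\Mod,\mathcal M} f \subseteq \mathcal M_{\operatorname{Null}}$, so $h$ is null.

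For the ($\Rightarrow$) direction, assume $f$ is null-monic. Since $\mathcal M_{\operatorname{Null}}$ is closed under addition (it is the preimage of a submodule under the identity-like relation $\succeq \zero$) and $\ker_{\Mod,\mathcal M} f$ is by definition the sum of all $f$-null submodules, it suffices to show that every $f$-null submodule $\mathcal M'$ is contained in $\mathcal M_{\operatorname{Null}}$. Fix $a \in \mathcal M'$ and define
\[
h_a: \mathcal A \to \mathcal M, \qquad h_a(c) = c a.
\]
Using the systemic-module axioms (distributivity, scalar associativity, compatibility of $(-)$, and monotonicity with respect to $\preceq$) one checks directly that $h_a$ satisfies each of the conditions in Definition~\ref{mor} with equality in (iii), so $h_a$ is a homomorphism. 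Since $\mathcal M'$ is an $\mathcal A$-submodule, $h_a(\mathcal A) \subseteq \mathcal M'$, and therefore $(fh_a)(\mathcal A) \subseteq f(\mathcal M') \subseteq \mathcal N_{\operatorname{Null}}$, showing $fh_a$ is null. By null-monicity, $h_a$ is null, and in particular $a = h_a(\one) \in \mathcal M_{\operatorname{Null}}$.

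The only subtle point is the forward direction, where one must avoid the trap highlighted in the remark preceding the lemma: namely that $\ker_{\Mod,\mathcal M} f$ need not itself be $f$-null when $f$ is merely a $\preceq$-morphism. The cyclic-homomorphism trick sidesteps this by applying null-monicity one generator at a time to each $f$-null submodule, rather than to $\ker_{\Mod,\mathcal M} f$ in one stroke, and then reassembling using the fact that $\mathcal M_{\operatorname{Null}}$ is additively closed.
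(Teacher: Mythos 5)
Your proof is correct, and the $(\Leftarrow)$ direction is essentially the paper's. The $(\Rightarrow)$ direction, however, takes a detour the paper avoids. The paper simply takes $h$ to be the inclusion of a given $f$-null submodule $\mathcal M' \hookrightarrow \mathcal M$: then $fh = f|_{\mathcal M'}$ is null because $\mathcal M'$ is $f$-null, so null-monicity makes $h$ null, i.e.\ $\mathcal M' \subseteq \mathcal M_{\operatorname{Null}}$, and one concludes by summing over all $f$-null submodules (as you also do at the end). You instead pull back via cyclic homomorphisms $h_a: \mathcal A \to \mathcal M$, $c \mapsto ca$, one generator $a \in \mathcal M'$ at a time. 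This works, but it asks you to verify that $h_a$ really is a homomorphism of systemic modules, which uses distributivity in the scalar argument, scalar associativity, compatibility of $(-)$, and monotonicity of the action — none of which the inclusion map requires. Your closing remark slightly misidentifies what the ``cyclic trick'' buys: the trap you cite (that $\ker_{\Mod,\mathcal M} f$ itself need not be $f$-null for a mere $\preceq$-morphism) is already sidestepped by both proofs in the same way, namely by applying null-monicity to each $f$-null submodule $\mathcal M'$ separately rather than to the whole kernel at once; the element-by-element refinement is not needed for that purpose. In short: correct, same skeleton, but the inclusion map does the job more economically than the cyclic maps.
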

 \begin{proof} $(\Rightarrow)$ For any $f$-null submodule $\mathcal M'$ of $\mathcal
 M$,
 consider the identity map $ h: \mathcal M' \to
\mathcal M'.$ Then $fh$ is null, implying $h$ is null. In
particular, $\mathcal M'= \mathcal M'_{\operatorname{Null}}\subseteq
\mathcal{M}_{\operatorname{Null}},$ and hence the null-module kernel
of $f$ is a subset of $\mathcal{M}_{\operatorname{Null}}.$

$(\Leftarrow)$ Suppose $fh$ is null, for a homomorphism $h:
\mathcal{K} \to \mathcal M.$ Then   $f(h(\mathcal K)) \subseteq
\mathcal{N}_{\operatorname{Null}}$. This implies that
$$h(\mathcal{K}) \subseteq \ker_{\Mod,\mathcal M} f \subseteq \mathcal
  M_{\operatorname{Null}},$$
 proving that $f: \mathcal M \to \mathcal N$ is null-monic.
  \end{proof}

Next, we define some notation which we will use later in defining
projective modules.

\begin{defn}\label{precont1}  Let $f:\mathcal M \to \mathcal N$ be a $\preceq$-morphism of $\mathcal A$-systemic modules $\mathcal M$ and $\mathcal N$. We define the following two sets:
\[
f( {\mathcal M} )_\preceq = \{ b \in \mathcal N: b \preceq f(a)
 \textrm{ for some }a \in {\mathcal M} \},\quad f( {\mathcal M} )_\succeq = \{ b \in \mathcal
N: b \succeq f(a) \textrm{ for some }a \in {\mathcal M}\}.
\]
\begin{enumerate}\eroman
\item
$f:  \mathcal M \to \mathcal N$ is $\preceq $-\textbf{onto} if $f(
{\mathcal M} )_\preceq \, =\mathcal N,$  i.e., for   every $b' \in
{\mathcal N}$ there exists $b \in  {\mathcal M}$, for which $b'
\preceq f(b) $.
\item
$f:  \mathcal M \to \mathcal N$ is h-\textbf{onto} if $f$ is a
$\preceq $-onto homomorphism.
\item
$f:  \mathcal M \to \mathcal N$ is $\succeq $-\textbf{onto} if $f(
{\mathcal M} )_\succeq \,  =\mathcal N,$ i.e., if for   every $b'
\in {\mathcal N}$ there is $b \in  {\mathcal M}$ such that $b'
\succeq f(b )$.
\end{enumerate}
\end{defn}

Although $\preceq$-onto and h-onto seem more appropriate for this
paper, giving stronger results for projectivity, A.~Connes and
C.~Consani \cite{CC2} use a definition more in line with $\succeq
$-onto and which seem more appropriate for homology, taking modules
over the Boolean semiring $\mathbb B$, whose symmetrization has some
properties parallel to the supertropical semialgebra. This
connection is to be discussed in detail in \cite{JMR1}.

 Most of our results  hold already for $\preceq $-onto
$\preceq $-morphisms and homomorphisms.

\begin{example}\label{morphm} In the supertropical setting, $f:  \mathcal M \to \mathcal N$ is $\preceq
    $-onto iff for every element $b$ of $\mathcal N$ there is $c\in \mathcal
    M$ such that $b+ \text{ghost}= f(c),$ which often is easy to
    satisfy when $c$ is a large enough ghost. $\succeq
    $-onto says that $b = f(c)+ \text{ghost}$, which for $b$ tangible says
    $b= f(c)$.


    For fuzzy rings, the condition says something about how $f(K_0)$ sits inside    $L_0$, notation as in \cite{Dr}
    .
\end{example}


\begin{lem}\label{monep10}
Let $f:\mathcal M \to \mathcal N$ be a $\preceq$-morphism of
$\mathcal A$-systemic modules $\mathcal M$ and $\mathcal N$. Then
\begin{enumerate} \eroman
\item
$f( {\mathcal M} )_\succeq $ is a submodule of $\mathcal N$. Moreover, $f$ is $\succeq $-onto, if for every $b \in \tT_{\mathcal N}$ there is $a \in {\mathcal M}$ such that $f(a )\preceq b$.

\item   $f( {\mathcal M} )_\preceq $ is a submodule of $\mathcal N$
for any  homomorphism $f:  \mathcal M \to \mathcal N$.

\end{enumerate}
 \end{lem}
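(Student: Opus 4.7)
The plan is to verify, in each case, the three submodule axioms (contains $\zero$, closed under $+$, closed under the $\mathcal A$-action), and then handle the ``Moreover'' clause of (i) by reducing a general element of $\mathcal N$ to a sum of tangibles.

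For (i), $\zero \in f(\mathcal M)_\succeq$ because $\zero = f(\zero) \succeq f(\zero)$. For additive closure, if $b_i \succeq f(a_i)$ for $i=1,2$, then by Definition~\ref{precedeq07}(iii) we have $b_1+b_2 \succeq f(a_1)+f(a_2)$; since $f$ is a $\preceq$-morphism, Definition~\ref{mor}(iii) gives $f(a_1+a_2) \preceq f(a_1)+f(a_2)$, i.e.\ $f(a_1)+f(a_2) \succeq f(a_1+a_2)$, and transitivity yields $b_1+b_2 \succeq f(a_1+a_2) \in f(\mathcal M)$. For closure under the action, first handle scalars $c \in \tT$: if $b \succeq f(a)$, then Definition~\ref{precedeq07}(iv) gives $cb \succeq cf(a) = f(ca)$. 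For general $c \in \mathcal A$, write $c = \sum_i c_i$ with $c_i \in \tT$ (using that $\tTz$ generates $(\mathcal A,+)$) and combine the tangible case with the additive closure just established. The ``Moreover'' clause then follows similarly: given $b' \in \mathcal N$, write $b' = \sum_j b_j$ with $b_j \in \tT_{\mathcal N}$; by hypothesis each $b_j \succeq f(a_j)$ for some $a_j$, so $b' \succeq \sum_j f(a_j) \succeq f\bigl(\sum_j a_j\bigr)$ by the $\preceq$-morphism property, witnessing $\succeq$-ontoness.

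For (ii), the three checks proceed analogously, but now reversing the direction of $\preceq$. The point where the argument differs, and where the hypothesis that $f$ is a \emph{homomorphism} is used, is additive closure: if $b_i \preceq f(a_i)$, we need $b_1+b_2 \preceq f(a_1+a_2)$, which requires the equality $f(a_1)+f(a_2) = f(a_1+a_2)$. For a mere $\preceq$-morphism we only have $f(a_1+a_2) \preceq f(a_1)+f(a_2)$, which points the wrong way and prevents us from concluding $b_1+b_2 \in f(\mathcal M)_\preceq$. Scalar closure again reduces to $c \in \tT$ via Definition~\ref{precedeq07}(iv) and additive closure.

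There is no real obstacle, just bookkeeping; the only subtle point is recognising in (ii) why ``homomorphism'' cannot be weakened to ``$\preceq$-morphism,'' which is precisely the directional asymmetry of Definition~\ref{mor}(iii). Note also that $f(\zero)=\zero$ in both parts follows from Definition~\ref{mor}(i), so no separate argument is needed for the zero element.
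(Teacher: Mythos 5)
Your proof is correct and follows essentially the same route as the paper's: verify the submodule axioms directly, using Definition~\ref{mor}(iii) for additive closure (with the asymmetry explaining why (ii) needs a homomorphism), and reduce the ``Moreover'' clause to tangible generators of $\mathcal N$. You supply more detail where the paper simply says the scalar-action closure and $\zero$-membership are ``clear''---in particular spelling out the reduction of $c\in\mathcal A$ to $c\in\tT$ via $\tTz$ generating $(\mathcal A,+)$, which is a worthwhile clarification but not a different argument.
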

\begin{proof} (i) $f(\mathcal M)_\succeq$ is clearly closed under the action of $\mathcal A$ and contains $\zero$. If $b_i \in f( {\mathcal M} )_\succeq $ for $i = 1,2$ then
writing $b_i \succeq f(a_i)$, we have $$b_1 + b_2 \succeq f(a_1)+f(a_2) \succeq f(a_1+a_2).$$ This shows that $f(\mathcal M)_\succeq$ is also closed under addition. The second assertion follows from the fact that $\tT_{\mathcal N}$ generates $\mathcal N.$ In fact, for any $b \in \mathcal N$, there exist $b_i \in \tT_\mathcal N$ such that $b=\sum_i b_i$. But, from the given condition, we can find $a_i \in \mathcal M$ such that $f(a_i) \preceq b_i$ and hence we have
\[
b=\sum_i b_i \succeq \sum f(a_i) \succeq f\left(\sum_i a_i\right).
\]

(ii)  One can easily check that $f(\mathcal M)_\preceq$ is closed
under the action of $\mathcal A$ and contains $\zero$. Suppose that
$b_1, b_2 \in f( {\mathcal M} )_\preceq$, i.e., there exist $a_1,a_2
\in \mathcal M$ such that $b_i \preceq f(a_i)$ for $i=1,2$. Since
$f$ is a homomorphism, it follows that
\[
b_1+b_2 \preceq f(a_1)+f(a_2)=f(a_1+a_2).
\]
This shows that $b_1+b_2 \in f(\mathcal M)_\preceq$ and hence $f(\mathcal M)_\preceq$ is also closed under addition.



 \end{proof}


\begin{defn}\label{precspli1}
\begin{enumerate}\eroman
\item
An onto homomorphism $\pi:\mathcal M \to \mathcal N$ is an
\textbf{N-quasi-isomorphism} if   $\pi$ is also N-monic.
\item
A  $\preceq$-onto $\preceq$-morphism $\pi:\mathcal M \to \mathcal N$
is a \textbf{$\preceq$-quasi-isomorphism} if $\pi$ is also
null-monic.
 \end{enumerate}
\end{defn}

\subsection{Congruences}$ $

Recall that a congruence on $\mathcal M$ is an equivalence relation
which preserves all of the operators; i.e., it is a subsystem of
$\mathcal M \times \mathcal M$ that contains the diagonal $\diag
_{\mathcal M}  := \{ (a,a) : a \in \mathcal M\}$ and is  symmetric
and transitive.

\subsection{$\preceq$-split  and h-split epics}\label{splitont}$ $

We recall a standard definition.

\begin{defn}\label{split8}
Let $\pi: \mathcal M\to \mathcal N$ be an onto homomorphism. We say
that $\pi: \mathcal M\to \mathcal N$ \textbf{splits}
 if there is a homomorphism $\nu:\mathcal N
\to \mathcal  M$ such that $ \pi \nu =\one_{\mathcal N} $.
 \end{defn}

In classical algebra, $\nu$ must be monic, and   any split epic
gives rise to an exact sequence.

\begin{example} If $\mathcal M= \mathcal N \oplus \mathcal  N',$ then the canonical projection $\mathcal M \to \mathcal  N$    splits
via the natural injection $\nu: \mathcal N\to \mathcal M$.
\end{example}

 This is trickier in the theory of
systems since, as we shall see, the analog of splitting need not
involve direct sums; a similar issue has been already observed in
tropical algebra, cf.~\cite[\S 2]{MJ}. Accordingly, we want to
weaken the definition, and consider its implications.

We write $f\preceq g$ for $\preceq$-morphisms $f,g:\mathcal M \to
\mathcal N$,
 if $f(b) \preceq g(b)$ for all $b \in \mathcal M$. Now, we weaken
Definition \ref{split8} as follows:

\begin{defn}\label{precspli} $ $
\begin{enumerate}\eroman
    \item
We say that a $\preceq$-morphism $\pi: \mathcal M\to \mathcal N$
\textbf{$\preceq$-split
s} (resp.~\textbf{h-splits}) if there is a
$\preceq$-morphism  (resp.~\textbf{homomorphism}) $\nu:\mathcal N
\to \mathcal M$ such that $\one_{\mathcal  N} \preceq \pi \nu $. In
this case, we also say that $ \nu $ \textbf{$\preceq$-splits}
(resp.~\textbf{$\preceq$ h-splits})~$\pi$, and that $\mathcal N$ is
a $\preceq$-\textbf{retract}  (resp.~\textbf{h-retract}) of
$\mathcal M$. \textbf{$\succeq$-splits} (resp.~\textbf{$\succeq$
h-splits}) is defined analogously, with  $\one_{\mathcal  N} \succeq
\pi \nu $.
\item
Let $f$ be any of $\{ \preceq$-morphism, homomorphism
$\succeq$-morphism, homomorphism$\}$.
\begin{enumerate}
    \item
$f: \mathcal M\to \mathcal M$ is
$\preceq$-\textbf{idempotent} if $f^2 \succeq f.$
\item
$f: \mathcal M\to
\mathcal M$ is $\tT$-\textbf{idempotent} if $f^2(a)=f(a),$ for all
$a \in \tT.$
\item
$f: \mathcal M\to \mathcal M$ is $(\tT,\preceq)$-\textbf{idempotent}
if $f$ is both $\tT$-idempotent and $\preceq$-idempotent.
\end{enumerate}
\end{enumerate}

   \end{defn}

 \begin{lem} \label{lem: 3.14}
 \begin{enumerate} \eroman
\item If  $\pi:\mathcal M \to \mathcal N$
and $\nu:\mathcal N \to \mathcal M$ are $\preceq$-morphisms with
$\one_{\mathcal  N} \preceq \pi \nu,$ then $\pi$ is  $\preceq$-onto,
and $\nu \pi $ is $\preceq$-idempotent.
\item If $\pi:\mathcal M \to \mathcal N$ is a homomorphism and $\nu:\mathcal N \to \mathcal M$ is a $\preceq$-morphism with $\one_{\mathcal  N} \preceq \pi \nu,$
 then $\one_{\mathcal M} (-)\nu \pi $  is $\preceq$-idempotent.
\item If  $\pi:\mathcal M \to \mathcal N$ and $\nu:\mathcal N \to \mathcal M$ are $\succeq$-morphisms with $\one_{\mathcal  N} \succeq \pi \nu
 $, then $\nu$ is null-monic and   $\one (-)\nu \pi $  is $\preceq$-idempotent.
 \item If  $\pi:\mathcal M \to \mathcal N$ is a homomorphism and $\nu:\mathcal N \to \mathcal M$ is a $\preceq$-morphism with $ a  = \pi \nu (a)$ for all
$a \in \tT_{\mathcal  N},$ then $\pi$ is onto, and $\nu
\pi $ is $\tT$-idempotent.

%
\end{enumerate}
 \end{lem}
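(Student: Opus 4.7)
The four parts all follow the same template: combine the hypothesis relating $\pi\nu$ with $\one_{\mathcal N}$ to the monotonicity axiom (v) in Definition~\ref{mor} and the negation compatibility of $\preceq$ in Definition~\ref{precedeq07}. The cheap cases are (i) and (iv); the subtle cases are (ii) and (iii), where the $\preceq$-idempotence of $\one_{\mathcal M}(-)\nu\pi$ is best proved by recognizing that $\pi(f(a))$ is already a quasi-zero, so that $\nu\pi(f(a))$ inherits this status by monotonicity alone.

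For (i), take $a := \nu(b)$ for a given $b \in \mathcal N$; then $b \preceq \pi\nu(b) = \pi(a)$, giving $\preceq$-ontoness. Evaluating $\one_{\mathcal N} \preceq \pi\nu$ at $\pi(a)$ and pushing through the $\preceq$-morphism $\nu$ via axiom (v) yields $\nu\pi(a) \preceq (\nu\pi)^2(a)$, i.e., $(\nu\pi)^2 \succeq \nu\pi$. For (iv), expand $b \in \mathcal N$ as $b = \sum_i a_i$ with $a_i \in \tT_{\mathcal N}$ (possible since $\tT_{\mathcal N} \cup \{\zero\}$ generates $\mathcal N$), apply $a_i = \pi\nu(a_i)$ and use homomorphy of $\pi$ to get $b = \pi(\sum_i \nu(a_i))$, which proves ontoness. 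For the $\tT$-idempotence of $\nu\pi$, under the standing convention $\pi(\tT_{\mathcal M}) \subseteq \tT_{\mathcal N}$, the hypothesis applied to the tangible $\pi(a)$ gives $\pi\nu\pi(a) = \pi(a)$, hence $(\nu\pi)^2(a) = \nu\pi(a)$ for every $a \in \tT_{\mathcal M}$.

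For (ii), set $f := \one_{\mathcal M}(-)\nu\pi$, so $f(a) = a(-)\nu\pi(a)$. Homomorphy of $\pi$ gives $\pi(f(a)) = \pi(a)(-)\pi\nu\pi(a)$; since $\pi(a) \preceq \pi\nu\pi(a)$ by hypothesis, Lemma~\ref{circget} yields $\pi(f(a)) \succeq \zero$. Monotonicity of the $\preceq$-morphism $\nu$ applied to $\zero \preceq \pi(f(a))$, together with $\nu(\zero) = \zero$, then gives $\nu\pi(f(a)) \succeq \zero$, and $(-)\nu\pi(f(a)) \succeq \zero$ follows since negation preserves $\succeq\zero$ by Definition~\ref{precedeq07}(ii). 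Adding this quasi-zero to $f(a)$ via Definition~\ref{precedeq07}(iii) produces $f^2(a) = f(a) + ((-)\nu\pi(f(a))) \succeq f(a)$, as required.

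Part (iii) follows the same template. Null-monicity of $\nu$ comes from Lemma~\ref{kertriv}: any $\nu$-null submodule $\mathcal N' \subseteq \mathcal N$ consists of $b$ with $\nu(b) \succeq \zero$, so axiom (v) for $\pi$ (together with $\pi(\zero) = \zero$) gives $\pi\nu(b) \succeq \zero$, and $b \succeq \pi\nu(b) \succeq \zero$ places $\mathcal N' \subseteq \mathcal N_{\operatorname{Null}}$. For the $\preceq$-idempotence of $f = \one_{\mathcal M}(-)\nu\pi$, the $\succeq$-morphism $\pi$ now gives $\pi(f(a)) \succeq \pi(a)(-)\pi\nu\pi(a)$; since $\pi\nu\pi(a) \preceq \pi(a)$, Lemma~\ref{circget} again forces $\pi(f(a)) \succeq \zero$, and the rest proceeds verbatim as in (ii). The main obstacle, in both (ii) and (iii), is resisting the temptation to expand $\nu\pi(f(a))$ fully through both morphisms: that route lands one at a comparison with $\nu\pi(a)(-)(\nu\pi)^2(a)$, which sits on the wrong side of $\zero$ to conclude $f^2 \succeq f$. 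Stopping one step earlier, at the level of $\pi(f(a))$, and noting that it is already a quasi-zero, is what makes the argument go through.
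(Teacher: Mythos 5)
Your proof is correct, and for parts (i), (ii), and (iv) it is essentially identical to the paper's: the same reduction to axiom (v) of Definition~\ref{mor} and the same use of Lemma~\ref{circget} to recognize $\pi(f(a))$ as a quasi-zero, and the same observation that the $\tT$-idempotence in (iv) tacitly requires $\pi(\tT_{\mathcal M})\subseteq\tT_{\mathcal N}$.

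Your treatment of part (iii) is a modest but genuine improvement over the paper's. The paper's chain $(\one(-)\nu\pi)^2 \succeq \one + \nu\pi\nu\pi(-)\nu\pi(-)\nu\pi \succeq \one + \nu\pi(-)\nu\pi(-)\nu\pi$ reads as though it uses $(\nu\pi)^2 \succeq \nu\pi$, which is the wrong direction under the hypothesis $\one_{\mathcal N}\succeq\pi\nu$ (that gives $(\nu\pi)^2 \preceq \nu\pi$); what actually makes the conclusion go through is that $(\nu\pi)^2(-)\nu\pi\succeq\zero$ by Lemma~\ref{circget}, regardless of the direction of comparison. Your version sidesteps this by applying the same trick you used in (ii) — stopping at $\pi(f(a))$, recognizing it as a quasi-zero via Lemma~\ref{circget}, and then transporting $\succeq\zero$ forward through $\nu$ and through negation — so the argument is uniform across (ii) and (iii) and avoids any questionable intermediate comparison. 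The only added care required in (iii), which you correctly supply, is that $\pi$ being a $\succeq$-morphism gives $\pi(f(a))\succeq\pi(a)(-)\pi\nu\pi(a)$ rather than equality, which still suffices.
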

 \begin{proof}
 (i) For any $b \in \mathcal N$, we have that $ b \preceq \pi (\nu (b)).
 $ This shows that $\mathcal N =\pi(\mathcal M)_\preceq$ and hence $\pi$ is $\preceq$-onto.
 Furthermore, $\nu \pi \nu \pi = \nu (\pi \nu )\pi \succeq \nu \one_{\mathcal  N}\pi  = \nu
 \pi$.

 (ii) For any $b \in \mathcal M$, let $c = \pi (b),$ so $(\one_{\mathcal M}  (-) \nu \pi ) (b)
 = b(-)  \nu   (c) $. Also $ \pi  \nu(c) \succeq c$ implies $ \pi  \nu(c) (-) c \succeq
 \zero$ by Lemma~\ref{circget},
 so $c(-) \pi  \nu(c)  \succeq \zero$, and thus $\nu(c(-) \pi  \nu(c)  )\succeq \zero
 $.
  Hence
\begin{equation}\label{M_21}
\begin{split}
(\one_{\mathcal M}(-)\nu \pi)(\one_{\mathcal M} (-) \nu \pi)(b) & =  (\one_{\mathcal M} (-) \nu \pi)( b(-)  \nu
(c)) \\
 & =  b(-)  \nu   (c) (-)\nu(c (-) \pi  \nu(c))  \succeq b(-)  \nu
(c)= (\one_{\mathcal M} (-) \nu \pi)(b).
\end{split}
\end{equation}

(iii)  For the first assertion, from Lemma \ref{kertriv}, it is
enough to show that $\ker_{\Mod,\mathcal N} \nu \subseteq \mathcal
N_{\operatorname{Null}}$. If $b \in \ker_{\Mod,\mathcal N} \nu$,
then $\nu(b) \succeq \zero$. Since $\one_{\mathcal  N} \succeq \pi
\nu
 $, we further have that
 \[
 b \succeq \pi \nu (b) \succeq \pi(\zero) \succeq \zero
 \]
 and hence $b \in \mathcal N_{\operatorname{Null}}$, showing that $\ker_{\Mod,\mathcal N} \nu \subseteq \mathcal
 N_{\operatorname{Null}}$. Also
  $$
  (\one_{\mathcal M}(-)\nu \pi)(\one_{\mathcal M} (-) \nu \pi) \ \succeq \ \one_{\mathcal M}\, +\, \nu \pi \nu \pi\, (-)\, \nu \pi \, (-)\, \nu
 \pi \succeq  \one_{\mathcal M}\,+ \,  \nu \pi \,(-)\,\nu \pi \,(-)\,\nu
 \pi \succeq  \one_{\mathcal M} (-)\nu
 \pi,$$
 showing that $(\one_{\mathcal M}(-)\nu\pi)$ is $\preceq$-idempotent.

 (iv) For any $b \in \mathcal N$, we can write $b =\sum_i a_i$ for   $a_i \in \tT_{\mathcal  N}$. Since, we assume that
 $a=\pi\nu(a)$ for any $a \in \tT_{\mathcal  N}$ and $\pi$ is a homomorphism, we have that
 \[
 b=\sum_i a_i = \sum_i \pi\nu(a_i)=\pi(\sum_i \nu(a_i)),
 \]
showing that $\pi$ is onto. Furthermore, for any $b \in \tT_{\mathcal M}$, we have that
\[
\nu\pi\nu\pi(b)=\nu(\pi\nu)(\pi(b))=\nu\pi(b),
\]
showing that $\nu\pi$ is $\tT$-idempotent.

 \end{proof}
%


 \begin{defn}\label{definition: direct sum} A systemic module  $\mathcal M =
(\mathcal M, \tT_{\mathcal M}, (-),\preceq)$ is the (finite)
\textbf{$\preceq$-direct sum} of systemic modules $(\mathcal M_i,
\tT_{\mathcal M_i}, (-),\preceq),\quad i \in I$ ($I$ finite), if
there are $\preceq$-morphisms $\pi _i : \mathcal M \to \mathcal M_i$
as well as $\preceq$-morphisms $\nu_i: \mathcal M_i \to \mathcal M$
that $\preceq$-split  $\pi _i$, for which $\one _ \mathcal M \preceq
\sum \nu_i\pi _i$, $\one _ {\mathcal M_i} \preceq \pi _i \nu_i$, and
$\zero _ \mathcal M \preceq
 \pi _j \nu_i$ for all $i \ne j.$

The analogous definition, \textbf{h-direct sum}, is for homomorphisms instead of
 $\preceq$-morphisms as follows:

$(\mathcal M, \tT_{\mathcal M}, (-),\preceq)$ is the (finite)
\textbf{h-direct sum} of  $(\mathcal M_i, \tT_{\mathcal M_i},
(-),\preceq),\quad i \in I$ ($I$ finite), if there are homomorphisms
$\pi _i : \mathcal M \to \mathcal M_i$ as well as homomorphisms
$\nu_i: \mathcal M_i \to \mathcal M$ that $\preceq$-split  $\pi _i$,
for which $\one _ \mathcal M \preceq \sum \nu_i\pi _i$, $\one _
{\mathcal M_i} \preceq \pi _i \nu_i$, and $\zero _ \mathcal M
\preceq
 \pi _j \nu_i$ for all $i \ne j.$

$\mathcal M = (\mathcal M, \tT_{\mathcal M}, (-),\preceq)$ is the
(finite) \textbf{$\succeq$-direct sum} of $(\mathcal M_i,
\tT_{\mathcal M_i}, (-),\preceq),\quad i \in I$ ($I$ finite), if
there are $\succeq$-morphisms $\pi _i : \mathcal M \to \mathcal M_i$
as well as $\succeq$-morphisms $\nu_i: \mathcal M_i \to \mathcal M$
that $\succeq$-split  $\pi _i$, for which $\one _ \mathcal M \succeq
\sum \nu_i\pi _i$, $\one _ {\mathcal M_i} \succeq \pi _i \nu_i$, and
$\zero _ \mathcal M \succeq
 \pi _j \nu_i$ for all $i \ne j.$
\end{defn}

Then we have the following.

\begin{thm}\label{splitdir}
Let $\pi: \mathcal M \to \mathcal N$ be a homomorphism. If $ \nu $
$\preceq$-splits $\pi$ (resp.~h-splits $\pi$), then:
\begin{enumerate}\eroman
\item
 $\mathcal M$ is
the $\preceq$-direct sum (resp.~h-direct sum) of $\mathcal M_1: =
\pi(\mathcal M)$ and $\mathcal M_2: = (\one_\mathcal M (-)\nu
\pi)(\mathcal M)$ with respect to the $\preceq$-morphisms
(resp.~homomorphisms) $\pi_1 =\pi, \, \nu _1  = \nu ,\,$ $\pi_2 =
(\one _\mathcal M (-)\nu \pi),$ $ \nu_2 = \one_{\mathcal M_2}$.
\item $\mathcal M$ is
the $\preceq$-direct sum (resp.~h-direct sum) of $\mathcal M_1= \nu
\pi   (\mathcal M)$ and $\mathcal M_2 = \ker_{\Mod,\mathcal M}\pi$,
with respect to $\nu _i = \one_{\mathcal M_i}$ for $ i = 1,2$.
\end{enumerate}
\end{thm}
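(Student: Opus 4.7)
The plan is to check the conditions of Definition~\ref{definition: direct sum} directly in each case, relying on three tools already in place: Lemma~\ref{circget} (to convert between $a_1 \preceq a_2$ and $a_2 (-) a_1 \succeq \zero$), Lemma~\ref{lem: 3.14}(ii) ($\preceq$-idempotence of $\one_{\mathcal M}(-)\nu\pi$), and Remark~\ref{morprop}(i) (that $\preceq$-morphisms send $\mathcal M_{\operatorname{Null}}$ into $\mathcal M'_{\operatorname{Null}}$). Throughout, the key identity is
\[
\nu\pi \;+\; \bigl(\one_{\mathcal M}(-)\nu\pi\bigr) \;=\; \one_{\mathcal M} + (\nu\pi)^\circ \;\succeq\; \one_{\mathcal M},
\]
which yields the $\one_{\mathcal M} \preceq \sum \nu_i \pi_i$ condition in both parts.

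For (i), with $\pi_1=\pi,\ \nu_1=\nu,\ \pi_2=\one_{\mathcal M}(-)\nu\pi,\ \nu_2=\one_{\mathcal M_2}$, the relation $\one_{\mathcal M_1}\preceq \pi_1\nu_1$ is the splitting hypothesis, and $\one_{\mathcal M_2}\preceq \pi_2\nu_2$ is exactly the $\preceq$-idempotence of $\one_{\mathcal M}(-)\nu\pi$. For the cross terms, applying $\one_{\mathcal N}\preceq \pi\nu$ to $\pi(b)$ gives $\pi(b)\preceq \pi\nu\pi(b)$, so Lemma~\ref{circget} yields $\pi_1\nu_2(b-\nu\pi(b)) = \pi(b)(-)\pi\nu\pi(b)\succeq \zero$; symmetrically, applying $\nu$ to $b\preceq \pi\nu(b)$ shows $\pi_2\nu_1(b)=\nu(b)(-)\nu\pi\nu(b)\succeq \zero$.

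For (ii), I take $\pi_1=\nu\pi$, $\pi_2=\one_{\mathcal M}(-)\nu\pi$, $\nu_i=\one_{\mathcal M_i}$. First I must verify that images land in the claimed submodules: $\pi_1(\mathcal M)\subseteq \nu\pi(\mathcal M)=\mathcal M_1$ is immediate, while $\pi\pi_2(b)=\pi(b)(-)\pi\nu\pi(b)\succeq \zero$ shows $\pi_2(\mathcal M)\subseteq \ker_{\Mod,\mathcal M}\pi=\mathcal M_2$. The relation $\one_{\mathcal M_1}\preceq \pi_1\nu_1$ reduces to showing $c\preceq \nu\pi(c)$ for $c=\nu\pi(b)\in\mathcal M_1$, which follows by applying $\nu$ to $\pi(b)\preceq \pi\nu\pi(b)$. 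For $\one_{\mathcal M_2}\preceq \pi_2\nu_2$, any $b\in\mathcal M_2$ satisfies $\pi(b)\succeq \zero$, hence by Remark~\ref{morprop}(i) $\nu\pi(b)\succeq \zero$, and since $\mathcal M_{\operatorname{Null}}$ is $(-)$-invariant (by Definition~\ref{precedeq07}(ii)), $(-)\nu\pi(b)\succeq \zero$, giving $\pi_2(b)=b+(-)\nu\pi(b)\succeq b$. The cross term $\pi_1\nu_2(b)=\nu\pi(b)\succeq \zero$ uses the same observation, while $\pi_2\nu_1(c)=c(-)\nu\pi(c)\succeq \zero$ is obtained by applying Lemma~\ref{circget} to $c\preceq \nu\pi(c)$.

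The main obstacle is bookkeeping: keeping straight which direction of $\preceq$ is available and noticing that both directions $a_2(-)a_1\succeq \zero$ and $a_1(-)a_2\succeq \zero$ coexist under $a_1\preceq a_2$, since this is what makes the cross terms vanish up to $\succeq \zero$. For the h-split variant, every auxiliary map ($\nu\pi$, $\one_{\mathcal M}(-)\nu\pi$, and the restrictions $\pi_i,\nu_i$) remains a homomorphism when $\nu$ and $\pi$ are, so the identical computation proves the h-direct sum assertion without modification.
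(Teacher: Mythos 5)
Your proof is correct and follows essentially the same route as the paper's: both parts hinge on the $\preceq$-idempotence of $\one_{\mathcal M}(-)\nu\pi$ from Lemma~\ref{lem: 3.14}(ii), the identity $\nu\pi + (\one(-)\nu\pi) = \one + (\nu\pi)^\circ \succeq \one$, and the two-sided consequence of Lemma~\ref{circget} to kill the cross terms. Your presentation is a bit more systematic (checking each clause of Definition~\ref{definition: direct sum} explicitly, and in (ii) spelling out the verification that $\one_{\mathcal M_1}\preceq\pi_1\nu_1$, which the paper leaves as ``the same sort of verifications, but easier''), but the content is the same.
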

 \begin{proof}
 For notational convenience, we write $\one_\mathcal M= \one_{\mathcal M_i} =\one$.
  $\one (-)\nu \pi $  is $\preceq$-idempotent, by Lemma~\ref{lem: 3.14}(ii).

(i) $\pi_1$ is $\preceq$-onto by Lemma~\ref{lem: 3.14}(i),  and
$\pi_2$ is $\preceq$-onto by the definition of $\mathcal M_2$.

Next, we show that $\nu_2$ $\preceq$-splits $\pi_2$.
Take $b_2 \in \mathcal M_2$. This means that there exists $b_1 \in
\mathcal M$ such that $b_2=b_1(-)\nu\pi(b_1)$, and now one observes
\begin{equation}\label{M_2}
\pi _2 \nu_2(b_2)=\pi_2(b_2)= (\one (-) \nu \pi)^2(b_1)\succeq (\one
(-) \nu\pi)(b_1)= b_2,
\end{equation}
since $\one (-)\nu\pi$ is $\preceq$-idempotent
by Lemma~\ref{lem: 3.14}(ii). 

We now show the remaining conditions. One can easily see the following:
\[
\nu _1 \pi_1 (b) +  \nu _2 \pi_2 (b)  =    \nu\pi (b) + (\one(-)\nu
\pi)(b)=b+(\nu\pi(b)(-)\nu\pi(b)) \succeq b, \quad b  \in \mathcal
M,
\]
showing that $\one_\mathcal M \preceq \nu_1\pi_1+\nu_2\pi_2$.

Finally, we have for $b = (\one (-)\nu \pi)b' \in \mathcal M_2 ,$

\[\pi_1 \nu_2(b)=
 \pi (\one(-)\nu\pi)(b') = (\pi  (-)\pi\nu\pi)(b') \succeq \pi(b')(-)\pi(b')\succeq \zero,\]

and similarly, for $b \in \mathcal M_1$, 
\[\pi_2 \nu_1 (b) = (\one (-)\nu \pi)(\nu(b)) = \nu(b) (-)\nu\pi\nu(b) \succeq \nu(b) (-)\nu(b) \succeq 0.
\]

%
The proof for h-splitting is analogous since $\pi_i$ and $\nu_i$ are homomorphisms.

(ii) The same sort of verifications as in (i), but easier. Now we
take $\pi_1 = \nu\pi,$ $\pi_2 = \one(-)\nu\pi,$ and $\nu_i$ to be
the canonical injection for $i=1,2,$ which we write as the identity
map. Then $\pi_1(\mathcal M) = \mathcal M_1$ and, for every $b \in
\mathcal M,$  $\pi_2(b) = b (-) \nu\pi(b)\in \mathcal M_2$ since $
\pi\nu \pi \succeq \pi$. But if $b_2 \in \mathcal M_2$ then
$$ (\one(-)\nu\pi)(b_2) = b_2 (-)\nu\pi(b_2) \succeq b_2,$$ so $\pi_2$ is
$\preceq$-onto.

 $\nu_2$ $\preceq$-splits $\pi_2$, since
 $$\pi_2 \nu_2(b_2)=
(\one(-)\nu\pi)(b_2) = b_2 (-) \nu\pi (b_2) \succeq  b_2  (-) \zero
\succeq b_2.$$
Furthermore, one can easily see that
\[
\nu _1 \pi_1 (b) +  \nu _2 \pi_2 (b)  =    \nu\pi (b) + (\one(-)\nu
\pi)(b)=b+(\nu\pi(b)(-)\nu\pi(b)) \succeq b, \quad b  \in \mathcal
M.
\]

Finally, for $b = (\one (-)\nu \pi)b' \in \mathcal M_2 ,$

for $b = \nu \pi b' \in \mathcal M_1 ,$ $$\pi_2
\nu_1(b)=(\one(-)\nu\pi)(b) = (\one(-)\nu\pi)\nu \pi(b') \succeq \nu
\pi(b') (-) \nu \pi(b') \succeq \zero,$$

and for $b  \in \mathcal M_2 ,$ $\pi_1 \nu_2(b)=
 \pi  (b )  \succeq \zero.$
\end{proof}
%

\section{$\preceq$-projective and $\succeq$-projective modules}\label{proj}$ $

 We are ready to define several versions of $\preceq$-projective  systemic modules, as well
 as
  $\succeq$-projective  modules over ground $\tT$-systems. This
  encompasses results of \cite{KN}, in view of Remark~\ref{sysgene}. 

\begin{defn}\label{precproj} 
 (See \cite{DeP,Ka3,KN,Tak} for comparison)
\begin{enumerate}\eroman
\item
A systemic module  $\mathcal P: =   (\mathcal P,\tT_{\mathcal
P},(-),\preceq)$ is \textbf{projective} if for any  onto
homomorphism of  systemic modules $h: \mathcal M \to \mathcal M'$,
every homomorphism $ f: \mathcal P \to \mathcal M'$ \textbf{lifts}
to a homomorphism $\tilde f: \mathcal P \to\mathcal M$, in the sense
that $h\tilde f =  f.$
\item
$\mathcal P$ is  $\preceq$-\textbf{projective} if for any
$\preceq$-onto $\preceq$-morphism  $h: \mathcal M \to \mathcal M',$
every  $\preceq$-morphism $ f: \mathcal P \to \mathcal M'$
$\preceq$-\textbf{lifts} to a $\preceq$-morphism $\tilde f: \mathcal
P \to \mathcal M$, in the sense that $f \preceq h\tilde f  .$
\item
$\mathcal P$ is  $(\preceq,h)$-\textbf{projective} if for any
$\preceq$-onto homomorphism  $h: \mathcal M \to \mathcal M',$ every
$\preceq$-morphism $ f: \mathcal P \to \mathcal M'$
$\preceq$-\textbf{lifts} to a $\preceq$-morphism $\tilde f: \mathcal
P \to \mathcal M$, in the sense that $f \preceq h\tilde f  .$
\item
$\mathcal P$ is   \textbf{h-projective} if for any  $\preceq$-onto
homomorphism  $h: \mathcal M \to \mathcal M',$ every homomorphism $
f: \mathcal P \to \mathcal M'$ $\preceq$-\textbf{lifts} to a
homomorphism $\tilde f: \mathcal P \to \mathcal M$, in the sense
that $f \preceq h\tilde f  .$
\item
 $\mathcal P$ is  $\succeq$-\textbf{projective} is defined
analogously to $\preceq$-projective, with $\succeq$ replacing
$\preceq$ where appropriate. In other words, for any $\succeq$-onto
$\preceq$-morphism  $h: \mathcal M \to \mathcal M',$ every
$\succeq$-morphism $ f: \mathcal P \to \mathcal M'$
$\succeq$-\textbf{lifts} to a $\succeq$-morphism $\tilde f: \mathcal
P \to \mathcal M$, in the sense that $f \succeq h\tilde f  .$
\end{enumerate}

%

\end{defn}

Note that the subtleties in these versions: $ \preceq $-projective
implies $(\preceq,h)$-projective, but the condition could fail in
the important case of free systems, cf. Remark~\ref{ccom}(ii) below.
The
  definition for h-projective provides the most  results, but these are less inclusive since $\preceq$-projective systems
are more general, and still satisfy many results analogous to the
projective theory.

\subsection{Basic properties of $\preceq$-projective and h-projective
systems}\label{prsys}$ $

 \begin{lem} \label{lemma: free system is projective}
 The free $\mathcal A$-systemic module  $\mathcal F := \mathcal A^{(I)} $ is   projective, $(\preceq,h)$-projective, h-projective, and $\succeq$-projective.
 \end{lem}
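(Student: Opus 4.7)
The plan is to exploit the universal property of $\mathcal F = \mathcal A^{(I)}$: every element has a unique expression $b = \sum_{i \in I} b_i e_i$ as a finite sum of the canonical generators $e_i$ (the tuple with $\one$ in slot $i$ and $\zero$ elsewhere), and any choice of images $\{m_i\} \subseteq \mathcal M$ extends uniquely to a homomorphism $\tilde f: \mathcal F \to \mathcal M$ via $\tilde f\bigl(\sum b_i e_i\bigr) = \sum b_i m_i$. So given $f: \mathcal F \to \mathcal M'$ and $h: \mathcal M \to \mathcal M'$, I will pick preimages $m_i$ of $f(e_i)$ in $\mathcal M$ in the sense appropriate to the version of ``onto'' that $h$ satisfies, then extend linearly. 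Since every homomorphism is a fortiori a $\preceq$-morphism and a $\succeq$-morphism, the $\tilde f$ so produced is automatically of the required type.

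In the four cases: for projectivity, ontoness of $h$ yields $m_i$ with $h(m_i) = f(e_i)$; for $(\preceq,h)$-projectivity and h-projectivity, $\preceq$-ontoness of the homomorphism $h$ (Definition~\ref{precont1}(i)) yields $m_i$ with $h(m_i) \succeq f(e_i)$; for $\succeq$-projectivity, $\succeq$-ontoness of the $\preceq$-morphism $h$ (Definition~\ref{precont1}(iii)) yields $m_i$ with $h(m_i) \preceq f(e_i)$. In all four cases one defines $\tilde f$ to be the homomorphism determined by $e_i \mapsto m_i$.

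Verification of the lifting condition then amounts to chaining surpassing relations in the correct direction. For the first three cases $h$ is a homomorphism, so
\[
h\tilde f\Bigl(\sum b_i e_i\Bigr) \;=\; \sum b_i h(m_i) \;\succeq\; \sum b_i f(e_i) \;\succeq\; f\Bigl(\sum b_i e_i\Bigr),
\]
the first $\succeq$ coming from the choice of $m_i$ (an equality in the plain projective case) and the second from Definition~\ref{mor}(iii) applied to $f$ (again an equality when $f$ is a homomorphism). For $\succeq$-projectivity, since $h$ is only a $\preceq$-morphism one picks up an extra $\preceq$ at the start, giving
\[
h\tilde f(b) \;\preceq\; \sum b_i h(m_i) \;\preceq\; \sum b_i f(e_i) \;\preceq\; f(b),
\]
i.e.\ $f \succeq h\tilde f$, as required; here the last step uses the $\succeq$-morphism axiom on $f$. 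No individual step is deep; the only thing worth care is keeping the directions of $\preceq$ and $\succeq$ consistent, since the ``onto'' hypothesis on $h$ and the morphism type of $f$ change across the four cases and the lifting inequality flips accordingly.
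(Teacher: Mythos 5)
Your argument is correct and takes essentially the same route as the paper: lift the set-theoretic assignment on the base $\{e_i\}$, extend linearly to a homomorphism $\tilde f$, and chain the surpassing relations in the direction dictated by the type of ontoness of $h$ and the morphism type of $f$. The paper writes the same chain from $f(\sum a_ie_i)$ toward $h\tilde f(\sum a_ie_i)$ and relegates the h-projective case to a remark, but your observation that $\tilde f$ is in every case a genuine homomorphism (hence both a $\preceq$- and a $\succeq$-morphism) is precisely the content of that remark.
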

  \begin{proof} We take the usual
 argument of lifting a set-theoretical map from the base $\{e_i: i \in I
 \}$ of $\mathcal F,$ in these three respective contexts.
Namely, choosing $x_i \in \mathcal M$ for which $h(x_i) =f(e_i)$
(resp.~$h(x_i) \succeq f(e_i)$, $h(x_i) \preceq f(e_i)$) and
defining a homomorphism $\tilde f: \mathcal F \to \mathcal M$ by
$\tilde f(e_i) = x_i,$ we have the three respective comparisons:

$$ f\left(\sum a_i e_i\right) = \sum f(a_i e_i)=
\sum a_i f(e_i)= \sum a_i h(x_i) = h\left(\sum a_i x_i\right)=
h\left(\sum a_i \tilde f(e_i)\right) = h \tilde f\left(\sum a_i
e_i\right) ,$$ proving   $f = h \tilde
 f.$

$$ f\left(\sum a_i e_i\right) \preceq \sum f(a_i e_i)=
\sum a_i f(e_i)\preceq \sum a_i h(x_i) = h\left(\sum a_i x_i\right)=
h\left(\sum a_i \tilde f(e_i)\right) = h \tilde f\left(\sum a_i
e_i\right) ,$$ proving   $f \preceq h \tilde
 f.$

$$ f\left(\sum a_i e_i\right) \succeq \sum f(a_i e_i)=
\sum a_i f(e_i)\succeq \sum a_i h(x_i) \succeq h\left(\sum a_i
x_i\right)= h\left(\sum a_i \tilde f(e_i)\right) = h \tilde
f\left(\sum a_i e_i\right),$$ proving   $f \succeq h \tilde
 f.$

 For h-projective see Remark~\ref{ccom}(i).

\end{proof}

\begin{rem}\label{ccom} $ $
\begin{enumerate} \eroman
\item In the proof,  we see, surprisingly, that any $\preceq$-morphism $ f:
\mathcal F \to \mathcal M$ can be $\preceq$-lifted to a homomorphism
$\tilde f: \mathcal F \to \mathcal M$ (since    $\tilde f$ is a
homomorphism by definition). This proves the remaining case of
h-projective in Lemma \ref{lemma: free system is projective}.

\item  The free $\mathcal A$-module  need not be
$\preceq$-projective, since the satisfaction of the first two
equations in the proof require $h$ to be a homomorphism!
\end{enumerate}\end{rem}

\subsubsection{Characterizations of $\preceq$-projective and
h-projective systemic modules}$ $

 Similar arguments as in \cite[\S 17]{golan92} show that
the following are equivalent for a systemic module  $\mathcal P$:
\begin{enumerate} \eroman
\item
$\mathcal P$  is  projective.
\item Every homomorphism onto   $\mathcal P$ splits.
\item There is an onto homomorphism from a free system to $\mathcal P$ that splits.
\item The functor
$\Hom (\mathcal P,\underline{\phantom{w}}) $ sends   onto
homomorphisms to  onto homomorphisms.
\end{enumerate}

 (iii)  is the condition used in \cite{KNZ} to define projective
modules. We extend this to $\preceq$. Define $\Mor_\preceq (\mathcal
M, \mathcal N)$ to be the  set of $\preceq$-morphisms from $\mathcal
M$ to $\mathcal N$, and its subset $\Hom (\mathcal M, \mathcal N)$
to be the  homomorphisms.

 \begin{prop}\label{epicspl}
The following are equivalent for a  systemic module  $\mathcal P$:
\begin{enumerate} \eroman
\item
$\mathcal P$  is  $(\preceq,h)$-projective.
\item
Every   $\preceq$-onto homomorphism to   $\mathcal P$
$\preceq$-splits.
\item
There is a $\preceq$-onto homomorphism from a free system to
$\mathcal P$ that  $\preceq$-splits.
\item
Given a $\preceq$-onto $\preceq$-morphism $h:\mathcal M \to \mathcal
M'$, the map $ \Mor_\preceq (\mathcal P,h): \Mor_\preceq(\mathcal P,
\mathcal M) \to \Mor_\preceq(\mathcal P,\mathcal M')$ given by $ g
\mapsto hg$ is $\preceq$-onto.
\end{enumerate}
 \end{prop}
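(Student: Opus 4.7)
The plan is to establish the cyclic chain (i) $\Rightarrow$ (ii) $\Rightarrow$ (iii) $\Rightarrow$ (i) and then derive (iv) as a reformulation of (i), reading ``$\preceq$-onto $\preceq$-morphism'' in (iv) as the $(\preceq,h)$-projective-friendly case where $h$ is a $\preceq$-onto homomorphism (compatible with how the definition is set up).

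For (i) $\Rightarrow$ (ii), suppose $\pi:\mathcal M\to\mathcal P$ is a $\preceq$-onto homomorphism. Apply $(\preceq,h)$-projectivity to the identity $\preceq$-morphism $\one_{\mathcal P}:\mathcal P\to\mathcal P$ and to $h=\pi$: we obtain a $\preceq$-morphism $\nu:\mathcal P\to\mathcal M$ with $\one_{\mathcal P}\preceq\pi\nu$, which is precisely the $\preceq$-splitting of $\pi$ in Definition~\ref{precspli}. For (ii) $\Rightarrow$ (iii), take the free $\mathcal A$-systemic module $\mathcal F=\mathcal A^{(I)}$ on a set of generators of $\mathcal P$ (including, if desired, all elements of $\tT_{\mathcal P}$); the canonical map $\pi:\mathcal F\to\mathcal P$ sending each basis element $e_i$ to the corresponding generator is an onto homomorphism, hence $\preceq$-onto, and (ii) supplies a $\preceq$-splitting.

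The main step is (iii) $\Rightarrow$ (i). Let $\pi:\mathcal F\to\mathcal P$ be a $\preceq$-onto homomorphism from a free system, $\preceq$-split by $\nu:\mathcal P\to\mathcal F$, so that $\one_{\mathcal P}\preceq\pi\nu$. Given a $\preceq$-onto homomorphism $h:\mathcal M\to\mathcal M'$ and a $\preceq$-morphism $f:\mathcal P\to\mathcal M'$, consider the composite $f\pi:\mathcal F\to\mathcal M'$, which is a $\preceq$-morphism out of the free system. By Lemma~\ref{lemma: free system is projective} together with Remark~\ref{ccom}(i), $\mathcal F$ is $(\preceq,h)$-projective, so there exists a $\preceq$-morphism $\tilde g:\mathcal F\to\mathcal M$ with $f\pi\preceq h\tilde g$. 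Set $\tilde f=\tilde g\nu$. For any $a\in\mathcal P$, the relation $a\preceq\pi\nu(a)$ combined with the fact that $f$ is a $\preceq$-morphism gives
\[
f(a)\preceq f(\pi\nu(a))=(f\pi)(\nu(a))\preceq h\tilde g(\nu(a))=h\tilde f(a),
\]
so $f\preceq h\tilde f$ as required. The subtle point here is that $\nu$ is only a $\preceq$-morphism (not a homomorphism), so we must invoke the $\preceq$-monotonicity of $f$ rather than an equality; this is exactly where Definition~\ref{mor}(v) is essential.

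Finally, (i) $\Leftrightarrow$ (iv) is a direct translation. For (i) $\Rightarrow$ (iv), given a $\preceq$-onto homomorphism $h:\mathcal M\to\mathcal M'$ and $f\in\Mor_\preceq(\mathcal P,\mathcal M')$, the lift supplied by $(\preceq,h)$-projectivity is a $g\in\Mor_\preceq(\mathcal P,\mathcal M)$ with $f\preceq hg=\Mor_\preceq(\mathcal P,h)(g)$, so $\Mor_\preceq(\mathcal P,h)$ is $\preceq$-onto. Conversely, (iv) applied to any $\preceq$-onto homomorphism $h$ (which is in particular a $\preceq$-onto $\preceq$-morphism) and to any $f\in\Mor_\preceq(\mathcal P,\mathcal M')$ produces a $\preceq$-lift, giving (i). The anticipated obstacle is purely notational: one must be careful at each step whether ``$\preceq$-onto'' refers to a homomorphism or to a general $\preceq$-morphism, since $(\preceq,h)$-projective crucially requires $h$ to be a homomorphism in order for the lifting along a free module (which produces only a $\preceq$-morphism lift) to compose correctly with the $\preceq$-splitting $\nu$.
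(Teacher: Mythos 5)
Your proof is correct and follows essentially the same route as the paper: (i)$\Rightarrow$(ii) via lifting the identity map, (ii)$\Rightarrow$(iii) via a free cover, (iii)$\Rightarrow$(i) by lifting $f\pi$ through the free module (using its $(\preceq,h)$-projectivity from Lemma~\ref{lemma: free system is projective} and Remark~\ref{ccom}) and composing with the $\preceq$-splitting $\nu$, then transferring the inequality along $\one_{\mathcal P}\preceq\pi\nu$ by Definition~\ref{mor}(v), and (i)$\Leftrightarrow$(iv) as a direct unwinding. You are also right to flag the discrepancy in (iv): the statement says ``$\preceq$-onto $\preceq$-morphism'' but the paper's own proof (and the definition of $(\preceq,h)$-projective) requires $h$ to be a homomorphism, so reading (iv) with ``homomorphism'' is the intended and consistent interpretation.
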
  \begin{proof}

  $((i)\Rightarrow(ii))$ Given a $\preceq$-onto homomorphism $h: \mathcal M \to \mathcal P$, the identity map $1_{\mathcal P}$ $\preceq$-lifts to
  a
  $\preceq$-morphism $g: \mathcal P \to \mathcal M$ satisfying  $
  1_{\mathcal P} \preceq h g  $.

  $((ii)\Rightarrow(iii))$ A fortiori, since we can define a $\preceq$-onto homomorphism from a free system to $\mathcal P$ by taking a base $\{e_i\}$ of a free system and sending the $e_i$ elementwise to the $\preceq$-generators of $\mathcal P$ as in the proof of Lemma \ref{lemma: free system is projective}.

 $((iii)\Rightarrow (i))$ Take a free  systemic module  $\mathcal
  F,$   with the projection   $\pi: \mathcal F \to \mathcal
  P$ which by hypothesis $\preceq$-splits, with $\nu : \mathcal P \to \mathcal
  F.$ Let $h: \mathcal M \to \mathcal M'$ be a $\preceq$-onto homomorphism. Then, for any $\preceq$-morphism $f: \mathcal P \to \mathcal M'$, we can $\preceq$-lift $ f
  \pi$ to $\tilde f : \mathcal F \to \mathcal M,$ i.e., $f \pi
  \preceq h \tilde f.$ Since $1_\mathcal P \preceq \pi\nu$, we have that
 \[
f  \preceq  f(\pi \nu) = (f \pi )\nu \preceq h(\tilde f \nu) ,
 \]
proving $\tilde f \nu$ $\preceq$-lifts $f.$

 $((i)\Leftrightarrow(iv))$ This directly follows from the definition. In fact, let $h:\mathcal M \to \mathcal M'$ be a $\preceq$-onto homomorphism. Then we have:
 \[
\Mor_\preceq (\mathcal P,h): \Mor_\preceq(\mathcal P, \mathcal M)
\to \Mor_\preceq(\mathcal P,\mathcal M'),\] given by $ g \mapsto hg.
$ For notational convenience, let $\varphi:=\Mor_\preceq (\mathcal
P,h)$, $A:=\Mor_\preceq(\mathcal P, \mathcal M)$, and
$B:=\Mor_\preceq(\mathcal P,\mathcal M')$. Then $\varphi$ is
$\preceq$-onto if and only if $\varphi(A)_\preceq = B$. Now, for any
$f \in B$, since $\mathcal P$ is $(\preceq,h)$-projective, there exists
$\tilde{f} \in A$, such that $f \preceq
h\tilde{f}=\varphi(\tilde{f})$. This shows that $\varphi$ is
$\preceq$-onto as desired.
 \end{proof}

\begin{remark}\label{remark: for later use}
One can easily see from the above proof on $((i) \implies (ii))$ that if $\mathcal{P}$ is $\preceq$-projective then every $\preceq$-onto morphism to $\mathcal{P}$ $\preceq$-splits.
\end{remark}
%
%
%
%
More appropriate in later research is the h-version:

 \begin{prop}\label{epicspl1}
The following are equivalent for a  systemic module  $\mathcal P$:
\begin{enumerate} \eroman
\item
$\mathcal P$  is  h-projective.
\item
Every $\preceq$-onto homomorphism to   $\mathcal P$ h-splits.
\item There is a $\preceq$-onto homomorphism from a free system to $\mathcal P$ that  h-splits.
\item Given a $\preceq$-onto homomorphism $h:\mathcal M \to \mathcal M'$, $ \Hom (\mathcal P,h): \Hom(\mathcal P, \mathcal M) \to \Hom(\mathcal P,\mathcal M')$ given by $ g \mapsto
hg$ is $\preceq$-onto.
\end{enumerate}
 \end{prop}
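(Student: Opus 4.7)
The plan is to mirror the proof of Proposition~\ref{epicspl} cyclically, $(i)\Rightarrow(ii)\Rightarrow(iii)\Rightarrow(i)$, together with $(i)\Leftrightarrow(iv)$, but with the careful observation that every lifting we invoke can be taken to be an honest homomorphism (not merely a $\preceq$-morphism). The latter observation rests on Remark~\ref{ccom}(i), which says that even a $\preceq$-morphism from a free system lifts to a \emph{homomorphism}, together with the fact that compositions of homomorphisms are homomorphisms.

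For $(i)\Rightarrow(ii)$, given a $\preceq$-onto homomorphism $h:\mathcal M\to\mathcal P$, apply h-projectivity of $\mathcal P$ to the identity homomorphism $\one_{\mathcal P}:\mathcal P\to\mathcal P$: this yields a homomorphism $\nu:\mathcal P\to\mathcal M$ with $\one_{\mathcal P}\preceq h\nu$, exhibiting an h-splitting. For $(ii)\Rightarrow(iii)$, let $\{p_i\}$ be $\preceq$-generators of $\mathcal P$ indexed by $I$, take the free systemic module $\mathcal F = \mathcal A^{(I)}$ with basis $\{e_i\}$, and define the homomorphism $\pi:\mathcal F\to\mathcal P$ by $e_i\mapsto p_i$; this $\pi$ is $\preceq$-onto since every $b\in\mathcal P$ satisfies $b\preceq\sum a_i p_i=\pi(\sum a_i e_i)$ for some choice of $a_i$, and by $(ii)$ it h-splits.

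For the crucial implication $(iii)\Rightarrow(i)$, suppose we have a $\preceq$-onto homomorphism $\pi:\mathcal F\to\mathcal P$ with a homomorphism $\nu:\mathcal P\to\mathcal F$ satisfying $\one_{\mathcal P}\preceq\pi\nu$. Let $h:\mathcal M\to\mathcal M'$ be a $\preceq$-onto homomorphism and $f:\mathcal P\to\mathcal M'$ a homomorphism. The composition $f\pi:\mathcal F\to\mathcal M'$ is a homomorphism, and by h-projectivity of the free systemic module (Lemma~\ref{lemma: free system is projective}) there is a homomorphism $\tilde f:\mathcal F\to\mathcal M$ with $f\pi\preceq h\tilde f$. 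Then $\tilde f\nu:\mathcal P\to\mathcal M$ is a homomorphism (being a composition), and monotonicity of $f$ (which as a homomorphism satisfies Definition~\ref{mor}(v)) together with the compatibility of $\preceq$ with composition on both sides yields
\[
f \preceq f(\pi\nu) = (f\pi)\nu \preceq h(\tilde f\nu),
\]
so $\tilde f\nu$ is the desired homomorphic $\preceq$-lift. Finally, $(i)\Leftrightarrow(iv)$ is a direct unpacking of definitions: $\Hom(\mathcal P,h)$ being $\preceq$-onto means exactly that every homomorphism $f\in\Hom(\mathcal P,\mathcal M')$ admits a homomorphism $g\in\Hom(\mathcal P,\mathcal M)$ with $f\preceq hg$, which is the statement of h-projectivity.

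The only real subtlety, and thus the main point to keep an eye on, is in $(iii)\Rightarrow(i)$: one must be sure the lifted map $\tilde f$ produced from h-projectivity of $\mathcal F$ is a homomorphism rather than a mere $\preceq$-morphism. This is precisely the content of Remark~\ref{ccom}(i), which guarantees that lifts out of a free system can always be chosen to be homomorphisms, so that the composite $\tilde f\nu$ remains in $\Hom(\mathcal P,\mathcal M)$ and not just in $\Mor_\preceq(\mathcal P,\mathcal M)$. No new ideas beyond those in the proof of Proposition~\ref{epicspl} are needed.
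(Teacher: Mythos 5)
Your proposal is correct and follows the same route the paper intends: the paper's own proof simply says to repeat the argument of Proposition~\ref{epicspl} with $\preceq$-splits replaced by h-splits and to note that compositions of homomorphisms are homomorphisms, and your write-up carries out exactly those substitutions, invoking the h-projectivity of the free systemic module (Lemma~\ref{lemma: free system is projective}, via Remark~\ref{ccom}(i)) so that $\tilde f$ and hence $\tilde f\nu$ remain homomorphisms.
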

\begin{proof}
The proof   is analogous to that of
 Proposition~\ref{epicspl}. In fact, one may follow the proof of Proposition~\ref{epicspl} by replacing $\preceq$-splits with h-splits and also by using the fact that the composition $fg$ is a homomorphism if $f$ and $g$ are homomorphisms.
 \end{proof}

%
We also have the $\succeq$ version.
 \begin{prop}\label{epicspl10}
The following are equivalent for a systemic module  $\mathcal P$:
\begin{enumerate} \eroman
\item
$\mathcal P$  is   $\succeq$-projective.
\item Every $\succeq$-onto  $\succeq$-morphism to $\mathcal P$ $\succeq$-splits.
\item There is a $\succeq$-onto $\succeq$-morphism  from a free system to $\mathcal P$ that  h-$\succeq$-splits.
\item The functor $\Hom (\mathcal P,\underline{\phantom{w}})$
 sends  $\succeq$-onto $\succeq$-morphisms to
$\succeq$-onto $\succeq$-morphisms.
\end{enumerate}
 \end{prop}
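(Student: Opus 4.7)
The plan is to follow the same cyclic pattern $(i) \Rightarrow (ii) \Rightarrow (iii) \Rightarrow (i)$ that worked for Propositions~\ref{epicspl} and~\ref{epicspl1}, with $\succeq$ systematically replacing $\preceq$, and then read off $(i) \Leftrightarrow (iv)$ as a direct reformulation. For $(i) \Rightarrow (ii)$, given a $\succeq$-onto $\succeq$-morphism $h:\mathcal M \to \mathcal P$, I would apply the $\succeq$-lifting property of $\mathcal P$ to the identity $\one_\mathcal P : \mathcal P \to \mathcal P$. The resulting $\nu:\mathcal P \to \mathcal M$ satisfies $\one_\mathcal P \succeq h\nu$, which is precisely the $\succeq$-splitting condition.

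For $(ii) \Rightarrow (iii)$, I would choose a generating set $\{p_i\}_{i \in I}$ of $\mathcal P$ (appealing to Lemma~\ref{monep7} to pass from a $\succeq$-generating set to an ordinary one if needed), form the canonical surjective homomorphism $\pi:\mathcal A^{(I)} \to \mathcal P$ with $e_i \mapsto p_i$, and note this is in particular a $\succeq$-onto $\succeq$-morphism. For $(iii) \Rightarrow (i)$, given a $\succeq$-onto $\preceq$-morphism $h:\mathcal M \to \mathcal M'$ and a $\succeq$-morphism $f:\mathcal P \to \mathcal M'$, I would form $f\pi:\mathcal F \to \mathcal M'$, a composition of $\succeq$-morphisms, and apply the $\succeq$-projectivity of the free module $\mathcal F = \mathcal A^{(I)}$ (Lemma~\ref{lemma: free system is projective}) to obtain a $\succeq$-lift $\widetilde{f\pi}:\mathcal F \to \mathcal M$ with $f\pi \succeq h\widetilde{f\pi}$. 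Setting $\tilde f := \widetilde{f\pi}\circ \nu$, I would verify that $\tilde f$ is itself a $\succeq$-morphism (composition of the homomorphism $\widetilde{f\pi}$ with the $\succeq$-morphism $\nu$), and use $\pi\nu \preceq \one_\mathcal P$ together with property~(v) of the $\succeq$-morphism $f$ to compute
\[
f(a) \;\succeq\; f\pi\nu(a) \;\succeq\; h\widetilde{f\pi}\nu(a) \;=\; h\tilde f(a), \qquad a \in \mathcal P,
\]
yielding $f \succeq h\tilde f$. The equivalence $(i) \Leftrightarrow (iv)$ then falls out by unwinding definitions exactly as in Proposition~\ref{epicspl}: $\succeq$-ontoness of $g \mapsto hg$ on $\succeq$-morphism sets is literally the $\succeq$-lifting assertion defining $\succeq$-projectivity.

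The step I expect to be the main obstacle is $(ii) \Rightarrow (iii)$, because (ii) supplies only a $\succeq$-morphism splitting, whereas (iii) demands an h-$\succeq$-splitting by an honest homomorphism $\nu$. The natural idea is to use the freeness of $\mathcal A^{(I)}$: given the $\succeq$-morphism splitting $\nu$ produced by (ii), one picks for each generator $p_i$ an element $x_i \in \mathcal A^{(I)}$ with $\pi(x_i) \preceq p_i$ (which exists because $\pi\nu(p_i) \preceq p_i$) and attempts to build a homomorphism $\nu_0$ by $\nu_0(p_i)=x_i$. Unlike the analogous step in Proposition~\ref{epicspl1}, the target here is free but the source $\mathcal P$ need not be, so one cannot simply extend by the universal property, and some additional argument (likely exploiting that $\widetilde{f\pi}\circ \nu$ in the subsequent step only needs $\nu$ to be a $\succeq$-morphism, or a direct construction in the spirit of Remark~\ref{ccom}(i)) will be needed to reconcile the statement with the available input.
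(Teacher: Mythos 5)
Your approach mirrors the paper's own proof, which is the single sentence that the argument is ``analogous to the proof of Proposition~\ref{epicspl}, where we reverse $\preceq$ and $\succeq$ and apply Lemma~\ref{lemma: free system is projective} (taking $\succeq$-morphisms instead of homomorphisms).'' Your $(i)\Rightarrow(ii)$, $(iii)\Rightarrow(i)$, and $(i)\Leftrightarrow(iv)$ steps are exactly the reversal of the corresponding steps in Proposition~\ref{epicspl}, and they go through: in $(iii)\Rightarrow(i)$, the lift $\widetilde{f\pi}$ produced by Lemma~\ref{lemma: free system is projective} is a homomorphism by construction, so $\widetilde{f\pi}\nu$ is a $\succeq$-morphism whenever $\nu$ is (apply the order-preserving $\widetilde{f\pi}$ to $\nu(b_1+b_2)\succeq\nu(b_1)+\nu(b_2)$), and the chain $f\succeq f\pi\nu\succeq h\widetilde{f\pi}\nu$ gives the required $\succeq$-lift.

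Your worry about $(ii)\Rightarrow(iii)$ is well founded and, as you suspect, points to an inaccuracy in the printed statement rather than a missing trick. Statement (iii) asks that the canonical $\pi:\mathcal A^{(I)}\to\mathcal P$ \emph{h-$\succeq$-splits}, i.e.\ admits a \emph{homomorphism} $\nu$ with $\one_{\mathcal P}\succeq\pi\nu$, whereas applying (ii) to $\pi$ yields only a $\succeq$-morphism $\nu$. In Proposition~\ref{epicspl} both (ii) and (iii) ask for a $\preceq$-morphism splitting, so the implication is ``a fortiori''; here the types do not match, and the usual rescue (extend $e_i\mapsto x_i$ to a homomorphism by freeness) is unavailable because the map in question has domain $\mathcal P$, not the free module. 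Your diagnosis of the fix is also correct: since $(iii)\Rightarrow(i)$ only uses that $\nu$ is a $\succeq$-morphism, the ``h-'' in (iii) is not needed for the equivalence, and (iii) should read ``$\succeq$-splits'' to restore the exact analogy with Proposition~\ref{epicspl}(iii). One further terminological mismatch worth noting: Definition~\ref{precproj}(v) defines $\succeq$-projectivity via lifting along $\succeq$-onto \emph{$\preceq$}-morphisms (and the introduction's statement of this proposition uses $\preceq$-morphisms in (ii)--(iv)), so in $(i)\Rightarrow(ii)$ the map $h:\mathcal M\to\mathcal P$ to which you apply $\succeq$-projectivity should be a $\succeq$-onto $\preceq$-morphism in order to match the definition.
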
  \begin{proof} Analogous to the proof of Proposition~\ref{epicspl}, where we reverse $\preceq$ and $\succeq$ and apply Lemma \ref{lemma: free system is projective}
 (taking $\succeq$-morphisms instead of homomorphisms).  \end{proof}

 \begin{lem}  [as in {\cite[Proposition~17.19]{golan92}}] \label{lemma: project direct sum}
    A direct sum $\sum \mathcal P_i$ of  systemic modules  is
 \textbf{projective} (resp.~\textbf{$\preceq$-projective}, $\preceq$-\textbf{h-projective}, \textbf{h-projective}, \textbf{$\succeq$-projective}) if and only if  each $\mathcal P_i$ is projective
 (resp.~$\preceq$-projective, $\preceq$-h-projective, h-projective, $\succeq$-projective).
 \end{lem}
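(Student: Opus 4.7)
The plan is to mirror the classical argument for direct sums of projective modules (cited as \cite[Proposition~17.19]{golan92}) and run it uniformly across the five variants. The workhorses are the canonical injections $\iota_i : \mathcal P_i \to \sum \mathcal P_j$ and projections $\pi_i : \sum \mathcal P_j \to \mathcal P_i$ from the direct-sum construction of Definition~\ref{cop}, which are honest homomorphisms satisfying $\pi_i \iota_i = \one_{\mathcal P_i}$ and $\pi_j \iota_i = \zero$ for $i \ne j$.

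For the forward direction, suppose $\mathcal P := \sum \mathcal P_j$ is $X$-projective for some fixed variant $X$. Given a lifting problem $(h : \mathcal M \to \mathcal M', \, f_k : \mathcal P_k \to \mathcal M')$ for $\mathcal P_k$, extend $f_k$ along the projection to $f := f_k \pi_k : \mathcal P \to \mathcal M'$, which is of the same type as $f_k$ since $\pi_k$ is a homomorphism (hence both a $\preceq$-morphism and a $\succeq$-morphism). By $X$-projectivity of $\mathcal P$, lift $f$ to $\tilde f : \mathcal P \to \mathcal M$, then set $\tilde f_k := \tilde f \iota_k$. Composing the lift inequality with the homomorphism $\iota_k$ and using $\pi_k \iota_k = \one_{\mathcal P_k}$ yields $f_k = f \iota_k \preceq h \tilde f \iota_k = h \tilde f_k$ (with $\succeq$ replacing $\preceq$ in the $\succeq$-projective case), exhibiting $\mathcal P_k$ as $X$-projective.

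For the backward direction, suppose each $\mathcal P_i$ is $X$-projective. Restrict a lifting problem $(h, f)$ for $\mathcal P$ via $f_i := f \iota_i$, lift each $f_i$ to $\tilde f_i : \mathcal P_i \to \mathcal M$ by $X$-projectivity of $\mathcal P_i$, and assemble $\tilde f : \mathcal P \to \mathcal M$ by $\tilde f(\sum \iota_j x_j) := \sum \tilde f_j(x_j)$; this is well-defined by uniqueness of the direct-sum decomposition and preserves the morphism type by a routine componentwise check. When $h$ is a homomorphism---covering the projective, $(\preceq, h)$-projective, and h-projective cases---the required comparison chains cleanly:
\[
f(x) \;\preceq\; \sum f_j(x_j) \;\preceq\; \sum h \tilde f_j(x_j) \;=\; h\Bigl(\sum \tilde f_j(x_j)\Bigr) \;=\; h \tilde f(x),
\]
exploiting that $h$ distributes over finite sums. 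The $\succeq$-projective case closes by the symmetric chain, since a $\preceq$-morphism $h$ satisfies $\sum h(y_j) \succeq h(\sum y_j)$ while the $\succeq$-morphisms $f, \tilde f_j$ satisfy the reversed additivity inequalities.

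The main obstacle is the $\preceq$-projective case, where $h$ is only a $\preceq$-morphism: the key equality $h(\sum \tilde f_j(x_j)) = \sum h \tilde f_j(x_j)$ is lost, and both $f(x)$ and $h \tilde f(x)$ end up bounded above by $\sum h \tilde f_j(x_j)$ in the \emph{same} direction, so the naive assembly fails to produce $f \preceq h \tilde f$. I expect this case to be handled by appealing to the splitting characterization: by Remark~\ref{remark: for later use}, each $\mathcal P_i$ being $\preceq$-projective implies that the $\preceq$-onto $\preceq$-morphism $\pi_i \pi : \mathcal M \to \mathcal P_i$ (which is $\preceq$-onto because the homomorphism $\pi_i$ preserves $\succeq$) $\preceq$-splits via some $\nu_i : \mathcal P_i \to \mathcal M$; one then argues that the assembled $\nu := \sum \nu_j \pi_j : \mathcal P \to \mathcal M$ $\preceq$-splits the original $\preceq$-onto $\preceq$-morphism into $\mathcal P$, recovering $\preceq$-projectivity via the $\preceq$-analog of Proposition~\ref{epicspl}.
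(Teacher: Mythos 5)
The paper's own proof of this lemma is the single line ``Formal, according to components,'' so your detailed componentwise argument is a genuine elaboration, and it is essentially the intended route. Your treatment of the forward direction (pre-composing with $\pi_k$, lifting, then post-composing with $\iota_k$) is correct for all five variants, and your backward-direction assembly is correct for \textbf{projective}, $(\preceq,h)$-\textbf{projective}, and \textbf{h-projective} (where $h$ is a homomorphism so it distributes exactly over the assembled sum), and also for $\succeq$-\textbf{projective} (where the $\preceq$-morphism $h$ gives $\sum_j h(y_j) \succeq h\bigl(\sum_j y_j\bigr)$, which fortuitously points the way you need).

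You are also right that the naive assembly fails for the backward direction of $\preceq$-\textbf{projective}: both $f(x)$ and $h\tilde f(x)$ end up $\preceq \sum_j h\tilde f_j(x_j)$, which does not yield $f(x) \preceq h\tilde f(x)$. However, your proposed repair does not close the gap. Remark~\ref{remark: for later use} gives only the implication ``$\preceq$-projective $\Rightarrow$ every $\preceq$-onto $\preceq$-morphism to it $\preceq$-splits''; the converse of Proposition~\ref{epicspl} (the step $(\mathrm{iii})\Rightarrow(\mathrm{i})$) yields only $(\preceq,h)$-projectivity, because it lifts $f\pi$ through a \emph{free} module against a \emph{homomorphism} $h$, and by Remark~\ref{ccom}(ii) free modules need \emph{not} be $\preceq$-projective against a mere $\preceq$-morphism $h$. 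In addition, the assembled $\nu = \sum_j \nu_j\pi_j$ only gives the componentwise bounds $\pi_i(x) \preceq \pi_i g\nu_i\pi_i(x)$, which do not directly reassemble to $x \preceq g\nu(x)$ when $g$ is only a $\preceq$-morphism. So even if the splitting could be arranged, the conclusion would be $(\preceq,h)$-projectivity, not $\preceq$-projectivity. The $\preceq$-projective clause of the backward implication therefore remains a gap in your argument---and it is one that the paper's own one-line proof does not address either.
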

 \begin{proof} Formal, according to components.
\end{proof}

One can sharpen this assertion.
 \begin{prop}  \label{projspl} If $\pi : \mathcal Q \to \mathcal P$  is a $\preceq$-split (resp. h-split)
  $\preceq$-morphism (resp. homomorphism) and $\mathcal Q$ is $\preceq$-projective (resp.~h-projective), then $\mathcal P$ is also
$\preceq$-projective (resp.~h-projective).
 \end{prop}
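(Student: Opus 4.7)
The plan is to transport the lifting property of $\mathcal Q$ through the splitting $\nu : \mathcal P \to \mathcal Q$ satisfying $\one_{\mathcal P} \preceq \pi\nu$, in direct analogy with the classical argument that a retract of a projective module is projective.

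First I would set up the data: given a $\preceq$-onto $\preceq$-morphism (resp.~$\preceq$-onto homomorphism) $h: \mathcal M \to \mathcal M'$ and a $\preceq$-morphism (resp.~homomorphism) $f: \mathcal P \to \mathcal M'$, I would form the composition $f\pi: \mathcal Q \to \mathcal M'$, which is of the correct type because compositions of $\preceq$-morphisms are $\preceq$-morphisms and compositions of homomorphisms are homomorphisms. Then I would invoke $\preceq$-projectivity (resp.~h-projectivity) of $\mathcal Q$ to obtain a $\preceq$-morphism (resp.~homomorphism) $g: \mathcal Q \to \mathcal M$ with $f\pi \preceq h g$. The candidate lift of $f$ is $\tilde f := g\nu: \mathcal P \to \mathcal M$, which has the correct type since $\nu$ does.

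It remains to verify $f \preceq h\tilde f$. For each $b \in \mathcal P$, applying Definition~\ref{mor}(v) to the $\preceq$-morphism $f$ and the inequality $b \preceq (\pi\nu)(b)$ (coming from $\one_{\mathcal P} \preceq \pi\nu$) yields
\[
f(b) \ \preceq\ f\bigl((\pi\nu)(b)\bigr) \ =\ (f\pi)\bigl(\nu(b)\bigr) \ \preceq\ h\bigl(g(\nu(b))\bigr) \ =\ h(\tilde f(b)),
\]
where the middle step applies $f\pi \preceq hg$ at the point $\nu(b)$. This gives $f \preceq h\tilde f$ and establishes the required lifting.

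The main thing to watch, rather than a genuine obstacle, is the bookkeeping between the two cases: in the h-projective version all of $\pi,\nu,f,g$ must be genuine homomorphisms so that $\tilde f = g\nu$ is as well, while in the $\preceq$-projective version it suffices that they be $\preceq$-morphisms. In both situations, the use of the surpassing relation is confined to the single step above and proceeds by the order-compatibility axioms of~$\preceq$.
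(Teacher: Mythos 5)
Your proof is correct and follows essentially the same route as the paper's: form $f\pi:\mathcal Q\to\mathcal M'$, lift it through $h$ by the projectivity of $\mathcal Q$, and postcompose with the splitting $\nu$ to get the lift of $f$. The only difference is that you unfold the final inequality pointwise, explicitly citing Definition~\ref{mor}(v) for $f(b)\preceq f(\pi\nu(b))$, whereas the paper states $h\tilde f\nu\succeq f\pi\nu\succeq f$ at the level of maps.
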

 \begin{proof}
 We first prove the case when $\mathcal Q$ is $\preceq$-projective. We write a $\preceq$-splitting map $\nu: \mathcal P \to \mathcal Q$ as in Definition~\ref{precspli}.
 For any $\preceq$-onto morphism  $$h: \mathcal M \to \mathcal
 M',$$
and every $\preceq$-morphism $ f: \mathcal P \to \mathcal M'$, the
$\preceq$-morphism $f\pi$ $\preceq$-lifts to a
 $\preceq$-morphism $\tilde f: \mathcal Q \to \mathcal M$,
i.e., $ h\tilde f \succeq  f\pi.$ Hence $ h\tilde f \nu\succeq f\pi
\nu  \succeq f,$ so $\tilde f\nu$ $\preceq$-lifts $f$. This proves
that $\mathcal P$ is $\preceq$-projective.

One may prove the h-split case by the analogous argument.
\end{proof}


 \begin{prop}  \label{projspl2}
 Suppose $\mathcal Q$ is the $\preceq$-direct sum (resp.~ h-direct sum) of $\mathcal P_i$, with each $\mathcal P_i$ a $\preceq$-retract (resp.~ h-retract) of $\mathcal Q.$ If the $\mathcal P_i$
    are  $(\preceq,h)$-projective (resp.~h-projective) then $\mathcal Q$ is also
 $(\preceq,h)$-projective (resp.~h-projective).
 \end{prop}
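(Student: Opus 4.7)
The plan is to mimic the classical argument that a direct sum of projective modules is projective, using the $\preceq$-splitting maps $\pi_i, \nu_i$ that realize $\mathcal Q$ as a $\preceq$-direct sum of the $\mathcal P_i$.

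To verify the $(\preceq,h)$-projective case, I would start with an arbitrary $\preceq$-onto homomorphism $h:\mathcal M \to \mathcal M'$ and $\preceq$-morphism $f:\mathcal Q \to \mathcal M'$. For each index $i$, form the composition $f_i := f\nu_i : \mathcal P_i \to \mathcal M'$, which is a $\preceq$-morphism since $\preceq$-morphisms compose. Because each $\mathcal P_i$ is $(\preceq,h)$-projective, $f_i$ $\preceq$-lifts to a $\preceq$-morphism $\tilde f_i : \mathcal P_i \to \mathcal M$ with $f_i \preceq h\tilde f_i$. I would then set
\[
\tilde f \ :=\ \sum_i \tilde f_i \pi_i \ :\ \mathcal Q \to \mathcal M,
\]
(a finite sum as in Definition~\ref{definition: direct sum}), which is a $\preceq$-morphism as a finite sum of compositions of $\preceq$-morphisms.

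The key verification is $f \preceq h\tilde f$. Using $\one_\mathcal Q \preceq \sum \nu_i\pi_i$ and the fact that $f$ is a $\preceq$-morphism (hence monotone by Definition~\ref{mor}(v) and subadditive by (iii)), for any $b \in \mathcal Q$ I would chain:
\[
f(b) \ \preceq\ f\!\left(\sum_i \nu_i\pi_i(b)\right) \ \preceq\ \sum_i f\nu_i\pi_i(b) \ =\ \sum_i f_i\pi_i(b) \ \preceq\ \sum_i h\tilde f_i\pi_i(b) \ =\ h\!\left(\sum_i \tilde f_i\pi_i(b)\right) \ =\ h\tilde f(b),
\]
where the first inequality uses monotonicity of $f$, the second uses subadditivity, the third applies the coordinatewise lifts $f_i\preceq h\tilde f_i$, and the final equality crucially uses that $h$ is a homomorphism so that it commutes with finite sums.

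The h-projective case proceeds by the same argument, with every $\preceq$-morphism replaced by a homomorphism; here $f$ being a homomorphism upgrades the second step to an equality, and $\tilde f = \sum \tilde f_i\pi_i$ is automatically a homomorphism. The main subtlety I expect is precisely the dual role of homomorphisms: the hypothesis that the ambient $h$ is a homomorphism (built into the definition of $(\preceq,h)$-projective) is what lets the last step of the chain move the sum outside $h$; if $h$ were merely a $\preceq$-morphism one would only get $\sum h\tilde f_i\pi_i(b) \succeq h(\sum \tilde f_i\pi_i(b))$, the wrong direction, and the argument would collapse. This is exactly why the $(\preceq,h)$ and h-versions are the right notions here rather than the fully $\preceq$-projective one.
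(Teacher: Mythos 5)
Your proof is correct and follows essentially the same route as the paper: lift each $f_i = f\nu_i$ to $\tilde f_i$, set $\tilde f = \sum_i \tilde f_i\pi_i$, and combine $\one_\mathcal Q \preceq \sum\nu_i\pi_i$ with the subadditivity and monotonicity of $f$ and the fact that $h$ is a homomorphism. Your closing remark pinpointing why $h$ must actually be a homomorphism (so the sum can be pulled inside $h$ in the correct direction) is a useful explicit articulation of a step the paper leaves tacit, and it correctly explains why the result is stated for $(\preceq,h)$-projectivity rather than full $\preceq$-projectivity.
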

 \begin{proof}
 We write $\nu_i: \mathcal P_i \to \mathcal Q$ and $\pi_i : \mathcal Q \to \mathcal P_i$ as in Definition~\ref{definition: direct sum}.
 For any   $\preceq$-onto  homomorphism $h: \mathcal M \to \mathcal
 M'$ and  $\preceq$-morphism $ f  : \mathcal Q \to \mathcal M',$
define the  $\preceq$-morphisms $ f_i = f \nu_i: \mathcal P_i \to
\mathcal M'$, which $\preceq$-lift to
 $\preceq$-morphisms $\tilde f_i: \mathcal P_i \to \mathcal M$,
i.e., $ h\tilde f_i \succeq f   \nu_i .$ But then $ h\tilde f_i
\pi_i \succeq f   \nu_i  \pi _i,$   so $h (\sum_i \tilde f_i  \pi _i
)\succeq  f \sum_i (\nu_i \pi_i) \succeq f,$ implying $\sum_i \tilde
f_i \pi_i$ $\preceq$-lifts $f.$ The same argument holds for
homomorphisms since a finite sum of homomorphisms is a homomorphism.
\end{proof}

The corresponding result for $\succeq$-projective is proved
analogously.

 \begin{prop}  \label{projspl2}
 Suppose $\mathcal Q$ is the $\succeq$-direct sum  of $\mathcal P_i$, with each $\mathcal P_i$ a $\succeq$-retract
   of $\mathcal Q.$ If the $\mathcal P_i$
    are  $ \succeq $-projective  then $\mathcal Q$ is also
 $ \succeq $-projective.
 \end{prop}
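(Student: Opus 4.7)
The plan is to adapt the proof of the preceding $(\preceq,h)$-projective version of Proposition~\ref{projspl2} by reversing inequalities throughout, taking advantage of the symmetry between the $\preceq$ and $\succeq$ conditions. First, unpack the $\succeq$-direct sum structure (Definition~\ref{definition: direct sum}): there exist $\succeq$-morphisms $\pi_i : \mathcal Q \to \mathcal P_i$ and $\nu_i : \mathcal P_i \to \mathcal Q$ satisfying $\one_{\mathcal Q} \succeq \sum_i \nu_i \pi_i$, $\one_{\mathcal P_i} \succeq \pi_i \nu_i$, and $\zero_{\mathcal Q} \succeq \pi_j \nu_i$ for $i \ne j$, with each $\mathcal P_i$ a $\succeq$-retract of $\mathcal Q$.

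Now, given a $\succeq$-onto $\preceq$-morphism $h: \mathcal M \to \mathcal M'$ and a $\succeq$-morphism $f: \mathcal Q \to \mathcal M'$ to be $\succeq$-lifted, I would restrict to each summand by setting $f_i := f \nu_i : \mathcal P_i \to \mathcal M'$; this is a $\succeq$-morphism since compositions of $\succeq$-morphisms are $\succeq$-morphisms (as $\beta\alpha(b_1+b_2) \succeq \beta(\alpha(b_1)+\alpha(b_2)) \succeq \beta\alpha(b_1) + \beta\alpha(b_2)$). By $\succeq$-projectivity of $\mathcal P_i$, each $f_i$ admits a $\succeq$-lift $\tilde f_i : \mathcal P_i \to \mathcal M$ satisfying $f_i \succeq h\tilde f_i$. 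I would then define $\tilde f := \sum_i \tilde f_i \pi_i : \mathcal Q \to \mathcal M$, which is a $\succeq$-morphism since sums of $\succeq$-morphisms are $\succeq$-morphisms.

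The verification $f \succeq h\tilde f$ proceeds by a four-step chain of $\preceq$-inequalities applied to any $b \in \mathcal Q$:
\[
h\tilde f(b) \;=\; h\!\left(\sum_i \tilde f_i \pi_i(b)\right) \;\preceq\; \sum_i h\tilde f_i \pi_i(b) \;\preceq\; \sum_i f \nu_i \pi_i(b) \;\preceq\; f\!\left(\sum_i \nu_i \pi_i(b)\right) \;\preceq\; f(b).
\]
The first step uses that $h$ is a $\preceq$-morphism (Definition~\ref{mor}(iii)); the second uses the lifts $h\tilde f_i \preceq f_i = f\nu_i$; the third uses that $f$, being a $\succeq$-morphism, satisfies $f(x)+f(y) \preceq f(x+y)$; and the last uses $\sum_i \nu_i \pi_i(b) \preceq b$ together with the monotonicity of $f$ under $\preceq$ from Definition~\ref{mor}(v), which is retained for $\succeq$-morphisms.

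The only mild subtlety, and the place requiring the most care, is in step three of the chain above: one must remember that the direction of condition (iii) is reversed for $\succeq$-morphisms but condition (v) is not, so mixing a $\preceq$-morphism $h$ with a $\succeq$-morphism $f$ produces a consistent one-directional chain rather than an obstruction. The nondiagonal summands $\pi_j\nu_i$ ($i \ne j$) play no active role here beyond confirming that $\tilde f$ is well-behaved on the decomposition, just as in the earlier $(\preceq,h)$-version.
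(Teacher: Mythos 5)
Your proof is correct and follows essentially the same approach as the paper: restrict $f$ to each summand via $\nu_i$, lift using $\succeq$-projectivity of $\mathcal P_i$, reassemble via $\sum_i \tilde f_i\pi_i$, and verify the chain of inequalities. Your version is in fact slightly more careful than the paper's (you correctly take $h$ to be a $\succeq$-onto $\preceq$-morphism as the definition of $\succeq$-projectivity requires, where the paper's proof loosely writes ``homomorphism,'' and the paper has a sign typo ``$h\tilde f_i \pi_i \succeq f\nu_i\pi_i$'' that should read $\preceq$, as your chain correctly has it).
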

 \begin{proof}
 We write $\nu_i: \mathcal P_i \to \mathcal Q$ and $\pi_i : \mathcal Q \to \mathcal P_i$ as in Definition~\ref{definition: direct sum}.
 For any  $\succeq$-onto  homomorphism $h: \mathcal M \to \mathcal
 M'$ and  $\succeq$-morphism $ f  : \mathcal Q \to \mathcal M',$
define the  $\succeq$-morphisms $ f_i = f \nu_i: \mathcal P_i \to
\mathcal M'$, which $\succeq$-lift to
 $\succeq$-morphisms $\tilde f_i: \mathcal P_i \to \mathcal M$,
i.e., $ h\tilde f_i \preceq f   \nu_i .$ But then $ h\tilde f_i
\pi_i \succeq f   \nu_i  \pi _i,$   so $h (\sum_i \tilde f_i  \pi _i
)\preceq  \sum_i h \tilde f_i  \pi _i \preceq f \sum_i (\nu_i \pi_i)
\preceq f,$ implying $\sum_i \tilde f_i \pi_i$ $\succeq$-lifts $f.$
\end{proof}

\begin{prop}\label{retractlift}
If $\mathcal Q$ is $\preceq$-quasi-isomorphic to  $\mathcal P_1$ and
$\mathcal P_1$ is $(\preceq,h)$-projective 
, then $\mathcal Q$ is also
$(\preceq,h)$-projective. 
\end{prop}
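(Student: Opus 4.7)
The plan is to reduce the claim to Proposition~\ref{projspl} by showing that the witnessing $\preceq$-quasi-isomorphism is a $\preceq$-split $\preceq$-morphism in the appropriate direction. Orient the quasi-isomorphism as $\pi \colon \mathcal{P}_1 \to \mathcal{Q}$, so that $\pi$ is a $\preceq$-onto $\preceq$-morphism that is in addition null-monic. By Lemma~\ref{kertriv}, null-monicity of $\pi$ is equivalent to $\ker_{\Mod,\mathcal{P}_1}\pi \subseteq \mathcal{P}_{1,\operatorname{Null}}$, and when combined with $\preceq$-ontoness, this says that every $b \in \mathcal{Q}$ admits a $\preceq$-preimage in $\mathcal{P}_1$ which is unique up to elements of the null submodule of $\mathcal{P}_1$.

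The crux is to upgrade this family of choices to a $\preceq$-morphism $\nu \colon \mathcal{Q} \to \mathcal{P}_1$ satisfying $\pi\nu \succeq \one_{\mathcal{Q}}$. I would route the construction through a free cover: by Proposition~\ref{epicspl}(iii), the $(\preceq,h)$-projectivity of $\mathcal{P}_1$ supplies a $\preceq$-onto homomorphism $\phi \colon \mathcal{F} \to \mathcal{P}_1$ from a free system $\mathcal{F}$ together with a $\preceq$-splitting $\psi \colon \mathcal{P}_1 \to \mathcal{F}$. Using freeness of $\mathcal{F}$, one chooses $\preceq$-preimages in $\mathcal{P}_1$ of a spanning set of $\mathcal{Q}$ and assembles them into a section on a free system over $\mathcal{Q}$; the null-monic hypothesis then forces this section to descend to a well-defined $\preceq$-morphism $\nu$ on $\mathcal{Q}$ with $\pi\nu \succeq \one_{\mathcal{Q}}$, since competing choices of preimages differ by elements of $\mathcal{P}_{1,\operatorname{Null}}$, which are absorbed by $\preceq$. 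Once $\nu$ is in hand, Proposition~\ref{projspl} applied to the $\preceq$-split $\preceq$-morphism $\pi \colon \mathcal{P}_1 \to \mathcal{Q}$ immediately yields that $\mathcal{Q}$ is $(\preceq,h)$-projective.

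The main obstacle is precisely the descent step: the set-theoretic choice of $\preceq$-preimages must be coordinated with the $\tT$-action, addition, and negation so that the outcome is a genuine $\preceq$-morphism and not merely a function. If the direct construction of $\nu$ proves awkward, an equivalent tactic bypasses $\nu$ altogether by verifying the lifting property for $\mathcal{Q}$ directly. Namely, given a $\preceq$-onto homomorphism $h \colon \mathcal{M} \to \mathcal{M}'$ and a $\preceq$-morphism $f \colon \mathcal{Q} \to \mathcal{M}'$, one first $\preceq$-lifts $f\pi \colon \mathcal{P}_1 \to \mathcal{M}'$ to some $g \colon \mathcal{P}_1 \to \mathcal{M}$ with $hg \succeq f\pi$ by the $(\preceq,h)$-projectivity of $\mathcal{P}_1$, and then invokes the null-monic property of $\pi$ to transport $g$ across $\pi$ into a $\preceq$-morphism $\tilde f \colon \mathcal{Q} \to \mathcal{M}$ with $h\tilde f \succeq f$. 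Either route rests on the same principle: the null-monic condition makes $\pi$ invertible ``up to null,'' which is precisely the amount of slack the surpassing relation $\preceq$ permits.
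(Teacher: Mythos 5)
Your proposal orients the quasi-isomorphism the wrong way, and this is what blocks both of your routes. The paper takes $\pi\colon \mathcal Q \to \mathcal P_1$, so that $\pi$ is a $\preceq$-onto map \emph{into} the $(\preceq,h)$-projective module $\mathcal P_1$; Proposition~\ref{epicspl}(ii) then hands you a $\preceq$-retract $\nu\colon \mathcal P_1 \to \mathcal Q$ with $\one_{\mathcal P_1} \preceq \pi\nu$ essentially for free, with no manual assembly of preimages. Given a $\preceq$-onto homomorphism $h\colon \mathcal M \to \mathcal M'$ and a $\preceq$-morphism $f\colon \mathcal Q \to \mathcal M'$, the paper lifts $f\nu\colon \mathcal P_1 \to \mathcal M'$ (not $f\pi$) through $h$ to $\tilde f\colon \mathcal P_1 \to \mathcal M$; the proposed lift of $f$ is then the composite $\tilde f\pi\colon \mathcal Q \to \mathcal M$, which is well-typed precisely because $\pi$ goes from $\mathcal Q$ to $\mathcal P_1$. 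Null-monicity of $\pi$, via Lemma~\ref{kertriv}, is used only at the last step to pass from $h\tilde f\pi \succeq f\nu\pi$ to $h\tilde f\pi \succeq f$.

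With your orientation $\pi\colon \mathcal P_1 \to \mathcal Q$, projectivity of $\mathcal P_1$ tells you nothing about sections of $\pi$, because $\pi$ now points \emph{out} of the projective module rather than into it. Both of your routes then demand a $\preceq$-morphism $\nu\colon \mathcal Q \to \mathcal P_1$ (explicitly in route (a), implicitly as the ``transport $g$ across $\pi$'' in route (b)), and nothing in the hypotheses supplies one: two $\preceq$-preimages of $b \in \mathcal Q$ under a $\preceq$-onto $\preceq$-morphism need not differ by an element of the null submodule of $\mathcal P_1$, since $b \preceq \pi(a)$ and $b \preceq \pi(a')$ gives no relation between $\pi(a)$ and $\pi(a')$, so the descent you invoke does not follow from null-monicity. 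You flagged this as the main obstacle, and it is a genuine gap in the argument rather than a technicality; the fix is not a cleverer construction of $\nu$ but simply flipping the arrow, so that the retract is supplied by $(\preceq,h)$-projectivity rather than built by hand, after which the argument closes along the lines you anticipated from Proposition~\ref{projspl}.
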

 \begin{proof}
 The same proof as before. We take the $\preceq$-quasi-isomorphism $\pi: \mathcal Q \to \mathcal
 P_1$.  By $(\preceq,h)$-projectivity, there is a $\preceq$-retract
 $\nu: \mathcal
 P_1\to \mathcal Q$.
 For any   $\preceq$-onto  homomorphism $h: \mathcal M \to \mathcal
 M'$ any  $\preceq$-morphism $ f  : \mathcal Q \to \mathcal M' $
 $\preceq$-lifts to a
 homomorphism $\tilde f : \mathcal P_1 \to \mathcal M$,
i.e., $ h\tilde f  \succeq f   \nu  .$ But then $ h\tilde f  \pi
\succeq f   \nu   \pi  ,$   so in view of Lemma~\ref{kertriv}, $h (
\tilde f \pi )\succeq
 f,$ implying $\tilde f \pi$ $\preceq$-lifts $f.$ \end{proof}
%

 In \cite{IKR6} a stronger
version of projectivity is used in the tropical theory,
 studied intensively in \cite{KNZ}, namely,

\begin{defn} A systemic module    is  \textbf{strongly projective} if it is  a direct summand of a
free  systemic module.
\end{defn}

  An example was
given in \cite{IKR6} of a projective module that is
 not strongly projective.
\begin{rem}[cf.~{\cite[Proposition~17.14]{golan92}}] \label{partpro}
Every strongly projective systemic module  is  projective,
$\preceq$-projective, h-projective, and $\succeq$-projective, seen
by passing to the free systemic module  and appealing to
Lemma~\ref{lemma: free system is projective} and Lemma~\ref{lemma:
project direct sum}. The theory of strongly projective modules is
nice, but too restrictive for our purposes for homology.
\end{rem}

\begin{thm}\label{Sch2} If $\mathcal P_1 $ is
 $(\preceq,h)$-projective  with a  $\preceq$-onto homomorphism $\pi: \mathcal P
\longrightarrow \mathcal P_1 $ whose null-module kernel $\mathcal K$
 is
  $(\preceq,h)$-projective, then $\mathcal P $  also is
 $(\preceq,h)$-projective.
\end{thm}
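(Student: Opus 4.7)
The plan is to construct the desired $\preceq$-lift directly, by decomposing an arbitrary $\preceq$-morphism out of $\mathcal P$ into a piece that factors through $\mathcal P_1$ via a $\preceq$-section of $\pi$ and a piece living in $\mathcal K$, and then lifting each separately using the respective hypothesis.

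Since $\mathcal P_1$ is $(\preceq,h)$-projective and $\pi:\mathcal P\to\mathcal P_1$ is a $\preceq$-onto homomorphism, Proposition~\ref{epicspl} produces a $\preceq$-morphism $\nu:\mathcal P_1\to\mathcal P$ with $\pi\nu\succeq\one_{\mathcal P_1}$. The key observation is that $b(-)\nu\pi(b)\in\mathcal K$ for every $b\in\mathcal P$: from $\pi(b)\preceq\pi\nu\pi(b)$, Lemma~\ref{circget} gives $\pi(b(-)\nu\pi(b))=\pi(b)(-)\pi\nu\pi(b)\succeq\zero$; since $\mathcal P_{1,\operatorname{Null}}$ is closed under addition and scalar multiplication, the $\mathcal A$-submodule of $\mathcal P$ generated by $\{b(-)\nu\pi(b):b\in\mathcal P\}$ is $\pi$-null and hence lies inside $\mathcal K$.

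Given a $\preceq$-onto homomorphism $h:\mathcal M\to\mathcal M'$ and a $\preceq$-morphism $f:\mathcal P\to\mathcal M'$, I would apply $(\preceq,h)$-projectivity of $\mathcal P_1$ to $f\nu:\mathcal P_1\to\mathcal M'$ to obtain $\tilde g:\mathcal P_1\to\mathcal M$ with $h\tilde g\succeq f\nu$, and $(\preceq,h)$-projectivity of $\mathcal K$ to $f|_{\mathcal K}:\mathcal K\to\mathcal M'$ to obtain $\tilde k:\mathcal K\to\mathcal M$ with $h\tilde k\succeq f|_{\mathcal K}$. Then set
\[
\tilde f(b):=\tilde g(\pi(b))+\tilde k(b(-)\nu\pi(b)),
\]
which is a $\preceq$-morphism as a sum of compositions of $\preceq$-morphisms. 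A short chain, using the two lift relations, the $\preceq$-morphism property $f(a_1+a_2)\preceq f(a_1)+f(a_2)$, and the surpassing relation $b+(\nu\pi(b))^\circ\succeq b$, yields
\[
h\tilde f(b)\succeq f(\nu\pi(b))+f(b(-)\nu\pi(b))\succeq f(b+(\nu\pi(b))^\circ)\succeq f(b),
\]
so $\tilde f$ $\preceq$-lifts $f$ and $\mathcal P$ is $(\preceq,h)$-projective.

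The main obstacle is the key observation above: verifying that $b(-)\nu\pi(b)$ genuinely sits inside the null-module kernel $\mathcal K$ rather than merely having $\pi$-image surpassing $\zero$. This is what allows $\tilde k$, defined only on $\mathcal K$, to be plugged into the formula for $\tilde f$, and it is the precise point where Lemma~\ref{circget} is used to convert the $\preceq$-splitting condition $\pi\nu\succeq\one_{\mathcal P_1}$ into a kernel-membership statement. (An alternative route, slightly less self-contained, would be to invoke Theorem~\ref{splitdir}(ii) to realize $\mathcal P$ as the $\preceq$-direct sum of $\nu\pi(\mathcal P)$ and $\mathcal K$, then apply the $(\preceq,h)$-analog of Proposition~\ref{projspl} together with Proposition~\ref{projspl2}.)
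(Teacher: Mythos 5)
Your proof is correct and follows essentially the same route as the paper: obtain a $\preceq$-section $\nu$ of $\pi$ from $(\preceq,h)$-projectivity of $\mathcal P_1$, show that $\pi_2(b)=b(-)\nu\pi(b)$ lands in $\mathcal K$, lift $f\nu$ and $f|_{\mathcal K}$ separately, and assemble $\tilde f=\tilde g\circ\pi+\tilde k\circ\pi_2$ with the same closing chain of $\succeq$ inequalities. You are in fact slightly more careful than the paper at the one delicate point, namely spelling out why $\pi(\pi_2(b))\succeq\zero$ actually forces $\pi_2(b)$ into the null-module kernel (because $\pi$ is a homomorphism, the set $\{x:\pi(x)\succeq\zero\}$ is a $\pi$-null submodule), whereas the paper passes over this silently.
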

\begin{proof}
We first lift the identity map of $\mathcal P_1$ to a
$\preceq$-retract $\nu: \mathcal P_1 \to \mathcal P$ of ${\pi}$, and
let $\pi_2 = \one_{\mathcal P}(-)\nu \pi$, which is
$\preceq$-idempotent by Lemma~\ref{lem: 3.14}(ii). Consider a
$\preceq$-onto homomorphism $h: \mathcal M \to \mathcal M'$. Then
for any $\preceq$-morphism $f: \mathcal P \to \mathcal M'$, we
 lift $f \nu : \mathcal P_1 \to \mathcal M'$ past $h$ to a  $\preceq$-morphism
 $\tilde f_1: \mathcal P_1 \to \mathcal M.$ Next, we lift $f |_{\mathcal K}: \mathcal K \to \mathcal M'$ to a $\preceq$-morphism
 $\tilde f_2: \mathcal K \to \mathcal M.$ We claim that for any $b \in \mathcal P$, $\pi_2(b) \in \mathcal K$. In fact, for any $b \in \mathcal P$,
 $\pi(b) (-) \pi \nu\pi (b) \succeq \pi(b) (-) \pi(b) \succeq \zero,$
 so
 \[
 \pi (\pi_2 (b)) = \pi (\one_\mathcal P (-) \nu \pi) (b)=\pi (b (-) \nu\pi(b))=\pi(b) (-) \pi \nu\pi (b) \succeq \pi(b) (-) \pi(b) \succeq \zero.
 \]
This implies that $\pi (\pi_2 (b)) \in (\mathcal P_1)_{\textrm{
Null}}$ and hence $\pi_2(b) \in \mathcal K$. Now, we define a
$\preceq$-morphism $\tilde{f}:\mathcal P \to \mathcal M$ as follows:
\[
\tilde f (b) =   \tilde f_1 (\pi  (b)) +   \tilde f_2 (\pi_2 (b)).
\]
Then $\tilde{f}$ is  well-defined since $\pi_2(b) \in \mathcal K$.
For any $b$ in $\mathcal P$ we have, using
Definition~\ref{mor}(iii,v),
  \begin{equation}\label{laststab}\begin{aligned} h\tilde f (b)& =   h\tilde
 f_1(\pi (b) ) +  h \tilde f_2(\one(-)\nu\pi) (b) 
 \\& \succeq   f(\nu \pi (b)) +  f    ((\one(-)\nu\pi)(b)) \succeq f ( b+\nu \pi (b) (-) \nu\pi(b))
\succeq  f(b),\end{aligned}\end{equation}
 proving $ h\tilde  f \succeq f,$ i.e. $\tilde  f$ $\preceq$-lifts $ f.$

\end{proof}
\subsubsection{$\preceq$-idempotent and $\preceq$-von Neumann regular
matrices}\label{vNR}$ $

Recall that an $m \times n$ matrix $A$ (with entries in a commutative ring) is said to be \emph{von Neumann regular} if there exists a matrix $B$ such that $A=ABA$. Classically, there is a well-known correspondence among von Neumann regularity, idempotency, and projectivity. In the tropical setting, as pointed out in \cite{IJK}, projectivity can be expressed in terms
of idempotent and von Neumann regular matrices.
%
%
%
%

 In what follows, we assume that all matrices have entries in a system $\mathcal A$ unless otherwise stated. We generalize the aforementioned correspondence to the $\preceq$-version.

 \begin{defn}
 We say $A \preceq B$ for $m \times n$ matrices $A = (a_{i,j}),\ B= (b_{i,j}),$
  if $ a_{i,j}\preceq b_{i,j}$ for all $i,j$.

  An  $n \times n$ matrix $A$ is
 \textbf{$\preceq$-idempotent} if $A\preceq  A^2 .$

 An  $m \times n$ matrix $A$ is
 \textbf{$\preceq$-von Neumann regular} if there is an  $n \times m$ matrix $B$
 for which $A\preceq  ABA .$

\end{defn}

\begin{prop}\label{kertriv1} Suppose $A$ is
 $\preceq$-idempotent. Then the module $A  \mathcal F$ is
$\preceq$-projective; in other words the column space of $A$ is a
$\preceq$-projective $\mathcal A$-submodule of
 $\mathcal F$, and symmetrically  the row space of $A$ is a
$\preceq$-projective $\mathcal A$-submodule of
 $\mathcal F$ .  \end{prop}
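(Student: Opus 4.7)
The plan is to realize $A\mathcal F$ as a $\preceq$-retract of the free systemic module $\mathcal F = \mathcal A^{(n)}$ and then derive projectivity by combining Lemma~\ref{lemma: free system is projective} with the retract-preservation result of Proposition~\ref{projspl}. First, I will view left multiplication by $A$ as an endomorphism $T_A : \mathcal F \to \mathcal F$, $v \mapsto A v$, corestrict it to obtain an onto homomorphism $\pi : \mathcal F \to A\mathcal F$, and let $\iota : A\mathcal F \hookrightarrow \mathcal F$ be the inclusion homomorphism. Both maps are honest homomorphisms, which is important for the h-split version of Proposition~\ref{projspl} (since by Remark~\ref{ccom}(ii) the free module is in general h-projective and $(\preceq,h)$-projective but not $\preceq$-projective).

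The key computation is that for any $w = Au \in A\mathcal F$,
\[
\pi \iota(w) \ = \ A w \ = \ A(Au) \ = \ A^2 u \ \succeq \ A u \ = \ w,
\]
where the inequality promotes the entry-wise hypothesis $A \preceq A^2$ to $Au \preceq A^2 u$ in $\mathcal F$. Hence $\one_{A\mathcal F} \preceq \pi\iota$, so $\iota$ is an h-splitting of $\pi$ and $A\mathcal F$ is an h-retract of $\mathcal F$. Since $\mathcal F$ is h-projective by Lemma~\ref{lemma: free system is projective}, the h-version of Proposition~\ref{projspl} gives that $A\mathcal F$ inherits projectivity in the sense intended by the statement. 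The row-space assertion follows by running the same argument with right multiplication: the relation $A \preceq A^2$ is invariant under transpose, so the row space of $A$ is likewise an h-retract of the row-vector free module and is therefore $\preceq$-projective.

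The main technical obstacle is the promotion from the entry-wise inequality $A \preceq A^2$ to the vector-level inequality $Av \preceq A^2 v$ for arbitrary $v \in \mathcal F$, because Definition~\ref{precedeq07}(iv) guarantees $\preceq$-preservation under multiplication only by tangible scalars on the left, while a general $u \in \mathcal F$ has non-tangible coordinates. I would handle this by first verifying the inequality on the tangible unit vectors $e_1, \dots, e_n$ of $\tT_{\mathcal F}$, where componentwise
\[
(A e_j)_i \ = \ A_{ij} \ \preceq \ (A^2)_{ij} \ = \ (A^2 e_j)_i
\]
is immediate from the hypothesis, and then extending additively via Definition~\ref{precedeq07}(iii), with the systemic module compatibility axiom $a_1 b_1 \preceq a_2 b_2$ (for $a_1 \preceq a_2$ in $\tT$ and $b_1 \preceq b_2$) absorbing the expansion of a general $u = \sum_j u_j e_j$.
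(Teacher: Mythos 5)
Your argument is essentially the paper's own: both define $\pi\colon\mathcal F\to A\mathcal F$ by $\pi(v)=Av$, take $\nu$ to be the inclusion $A\mathcal F\hookrightarrow\mathcal F$, deduce $\one_{A\mathcal F}\preceq\pi\nu$ from $A\preceq A^2$, and then invoke Proposition~\ref{projspl} together with projectivity of the free module. Your extra step of reducing a general $u\in\mathcal F$ to a sum of tangible-scalar multiples of the $e_j$ in order to justify $Au\preceq A^2u$ is a genuine clarification that the paper elides by simply asserting $\pi\preceq\pi^2$, and you are also right to flag the flavor subtlety: since $\mathcal F$ is only h-projective (not $\preceq$-projective, by Remark~\ref{ccom}(ii)), the h-branch of Proposition~\ref{projspl} literally yields h-projectivity of $A\mathcal F$, a discrepancy with the stated ``$\preceq$-projective'' that is already present in the paper's own proof.
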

 \begin{proof} Define $\pi: \mathcal F \to A  \mathcal F$ by $\pi(v)
 = Av.$ Then $\pi \preceq \pi ^2$, and taking $\nu : A  \mathcal F
 \to   \mathcal F$ to be the identity, we have $\one \preceq \pi
 \nu$ on $ A  \mathcal F$, so we conclude by
 Proposition~\ref{projspl}. \end{proof}

 \begin{cor}\label{vNr} If $A \preceq  ABA ,$
then   $AB  \mathcal F$ is $\preceq$-projective.
 \end{cor}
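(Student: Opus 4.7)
The plan is to reduce Corollary~\ref{vNr} to Proposition~\ref{kertriv1} by exhibiting $AB$ as a $\preceq$-idempotent matrix; once this is done the conclusion about the column space $AB\mathcal{F}$ is immediate from that proposition.

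First I would note that $AB$ is a square $m\times m$ matrix, so it makes sense to ask whether it is $\preceq$-idempotent. Starting from the hypothesis $A \preceq ABA$, I would multiply both sides on the right by $B$. The key point is that the surpassing relation $\preceq$ is preserved under multiplication by a fixed matrix on either side; this is just the entrywise version of Definition~\ref{precedeq07}(iv) combined with Definition~\ref{precedeq07}(iii), applied to each entry of the product (each entry is a sum of products of the form $a_{i,k} b_{k,j}$, and $a_{i,k} \preceq (ABA)_{i,k}$ implies $a_{i,k} b_{k,j} \preceq (ABA)_{i,k} b_{k,j}$). Hence
\[
AB \preceq ABAB = (AB)^2,
\]
which is precisely the statement that $AB$ is $\preceq$-idempotent.

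Now I would invoke Proposition~\ref{kertriv1} with the matrix $AB$ in place of $A$: since $AB$ is $\preceq$-idempotent, its column space $AB\mathcal{F}$ is a $\preceq$-projective $\mathcal{A}$-submodule of~$\mathcal{F}$. This gives the desired conclusion.

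There is essentially no obstacle here; the only thing to verify carefully is that $\preceq$ is a congruence with respect to matrix multiplication, which follows entrywise from the axioms of a surpassing relation. The corollary is really just the observation that $\preceq$-von Neumann regularity of $A$ is exactly what is needed to make $AB$ $\preceq$-idempotent, and then Proposition~\ref{kertriv1} does the rest.
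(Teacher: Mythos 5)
Your proposal is correct and matches the paper's own proof: both show $AB$ is $\preceq$-idempotent by multiplying the hypothesis $A \preceq ABA$ on the right by $B$ to get $AB \preceq ABAB = (AB)^2$, and then invoke Proposition~\ref{kertriv1}. Your added remark on why $\preceq$ is preserved entrywise under matrix multiplication is a correct filling-in of a detail the paper leaves implicit.
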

 \begin{proof}  $AB$ is $\preceq$-idempotent, since $AB \preceq  (AB)^2 = (ABA)B .$
\end{proof}

\begin{example}\label{idemproj} Proposition \ref{kertriv1} gives us
an explicit way of obtaining new $\preceq$-projectives, via
$\preceq$-idempotent matrices. For example, if $(-)$ is of the first kind (which happens in the supertropical case, see \cite[Definition 2.22]{Row16} for the definition), and $A'$ is an
idempotent matrix, then
$$(I + A')^2 = I + (A')^\circ + (A')^2 \succeq I + A',$$
and this can be done in general.
\end{example}

 The analogous results hold for $\succeq$-idempotent and
$\succeq$-von Neumann regular. This raises the question of whether
$AB  \mathcal F = A  \mathcal F$ when $ A \preceq ABA $. Clearly $ A
\mathcal F \preceq ABA \mathcal F \subseteq AB \mathcal F \subseteq
A \mathcal F$, which often implies equality, but a thorough
discussion would take us too far afield here.

Trlifaj~\cite{Tr} has
considered the dual to Baer's criterion:

We say a systemic module  $\mathcal M$ is \textbf{finitely
$\preceq$-generated} (as a systemic module ) if it is
$\preceq$-generated by a finite set of cyclic systemic modules.
%
%
%
%

\begin{rem}[{As in \cite[p.~2]{Tr}}]\label{Baer3}
Suppose for any  $\preceq$-onto homomorphism $h: \mathcal M \to
\mathcal M'$ of systemic modules, with $\mathcal M $ cyclic, that
every $\preceq$-morphism $ f: \mathcal P \to \mathcal M'$
$\preceq$-lifts to a $\preceq$-morphism $\tilde f: \mathcal P
\to\mathcal M$. Then this condition holds for~$\mathcal M$
$\preceq$-finitely generated. (Indeed, write $\mathcal M \preceq
\sum_{i=1}^t \mathcal A a_i$ for $a_i \in \mathcal M$,  apply the
criterion for each~$\mathcal A a_i$, and add the $\preceq$-liftings,
i.e., $\tilde f (a) = \sum \tilde f_i(a)$).
\end{rem}

\cite[Lemma~2.1]{Tr} gives a countable counterexample to this
condition, and presents a readable and interesting account of the
dual Baer criterion in the classical case.

\subsection{The $\preceq$-dual basis lemma}$ $

Deore and~Pati \cite{DeP} proved a dual  basis lemma for projective
modules, and the same proof works for $\preceq$-projectives,
h--projectives, and $\succeq$-projectives.

\begin{prop}\label{Dualbas}
A module pseudo-system $(\mathcal P, \tT_\mathcal P,(-),\preceq)$
that is $\preceq$-generated by $\{p_i \in \mathcal P:i \in I\}$ is
$(\preceq,h)$-projective (resp.~h-projective) if and only if there
are $\preceq$-onto $\preceq$-morphisms (resp.~homomorphisms) $g_i :
\mathcal P \to \mathcal A$ such that for all $a\in \mathcal A$ we
have $a \preceq\sum g_i(a) p_i,$ where $g_i(a) = \zero$ for all but
finitely many $i$.
\end{prop}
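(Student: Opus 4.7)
The plan is to mimic the classical dual basis lemma argument, adapted to the $\preceq$-framework, using the free module characterization of $(\preceq,h)$-projectivity from Proposition~\ref{epicspl}.

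For the forward direction, I would form the free $\mathcal A$-module $\mathcal F = \mathcal A^{(I)}$ with basis $\{e_i : i \in I\}$ and define the homomorphism $\pi : \mathcal F \to \mathcal P$ sending $e_i \mapsto p_i$. The $\preceq$-generation hypothesis on $\{p_i\}$ means that for each $a \in \mathcal P$ there is a finite expression $a \preceq \sum c_i p_i = \pi(\sum c_i e_i)$, so $\pi$ is $\preceq$-onto. By Proposition~\ref{epicspl}, $(\preceq,h)$-projectivity of $\mathcal P$ yields a $\preceq$-morphism $\nu : \mathcal P \to \mathcal F$ with $\one_{\mathcal P} \preceq \pi\nu$. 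Letting $\pi_i : \mathcal F \to \mathcal A$ denote the $i$-th coordinate homomorphism, I set $g_i := \pi_i \circ \nu$, so each $g_i$ is a $\preceq$-morphism to $\mathcal A$ and $g_i(a) = \zero$ for all but finitely many $i$ (since $\nu(a)$ lies in the direct sum). Writing $\nu(a) = \sum g_i(a) e_i$ gives $a \preceq \pi\nu(a) = \sum g_i(a) p_i$, as desired. In the h-projective case, $\nu$ may be taken to be a homomorphism, so the $g_i$ inherit that property.

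For the converse, suppose such $g_i$ exist and let $h : \mathcal M \to \mathcal M'$ be a $\preceq$-onto homomorphism and $f : \mathcal P \to \mathcal M'$ a $\preceq$-morphism. For each $i$, since $h$ is $\preceq$-onto, I choose $m_i \in \mathcal M$ with $f(p_i) \preceq h(m_i)$. I then define $\tilde f : \mathcal P \to \mathcal M$ by $\tilde f(a) = \sum g_i(a) m_i$, which is well-defined by the finiteness condition on $g_i(a)$, and is a $\preceq$-morphism because it is a finite sum of the $\preceq$-morphisms $a \mapsto g_i(a) m_i$ (routine verification using the axioms in Definition~\ref{mor}). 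Applying in turn the monotonicity of $f$ (Definition~\ref{mor}(v)), the $\preceq$-subadditivity of $f$ (Definition~\ref{mor}(iii,iv)), the choice of $m_i$, and the fact that $h$ is a homomorphism, I compute
\[
 f(a) \preceq f\Bigl(\sum g_i(a) p_i\Bigr) \preceq \sum g_i(a) f(p_i) \preceq \sum g_i(a) h(m_i) = h\Bigl(\sum g_i(a) m_i\Bigr) = h\tilde f(a),
\]
so $f \preceq h\tilde f$, proving $(\preceq,h)$-projectivity by Definition~\ref{precproj}(iii). For the h-projective version, if $f$ and each $g_i$ are homomorphisms then so is $\tilde f$, and the same computation applies verbatim.

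The argument is largely bookkeeping, and I expect no serious obstacle beyond making sure every $\preceq$ points in the right direction. The one genuinely subtle point to verify is the $\preceq$-onto claim on the $g_i$ asserted in the statement: my construction gives $g_i = \pi_i \circ \nu$, and since the coordinate projection $\pi_i : \mathcal F \to \mathcal A$ is onto, one needs to check that composing with $\nu$ (which need only satisfy $\one_{\mathcal P} \preceq \pi\nu$) still leaves $g_i$ $\preceq$-onto; if this is not automatic, the statement should be read as simply requiring $\preceq$-morphisms, with the $\preceq$-onto condition an extra feature available in the free-module construction. Either way the core equivalence above is unaffected.
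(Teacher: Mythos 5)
Your proof is correct and largely parallels the paper's, with one genuine structural difference in the converse and one sharp-eyed observation.

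The forward direction ($(\preceq,h)$-projective $\Rightarrow$ existence of the $g_i$) is essentially identical to the paper's: both take the free module $\mathcal F = \mathcal A^{(I)}$ with the canonical $\preceq$-onto homomorphism $\pi: \mathcal F \to \mathcal P$ sending $e_i \mapsto p_i$, obtain a $\preceq$-splitting $\nu$ via Proposition~\ref{epicspl}, and set $g_i = \pi_i \nu$. The converse is where you diverge. The paper defines $g:\mathcal P \to \mathcal F$ by $g(a) = \sum g_i(a)e_i$, checks $\pi g \succeq \one_{\mathcal P}$, and then invokes Proposition~\ref{epicspl}(iii)$\Rightarrow$(i). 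You instead verify Definition~\ref{precproj}(iii) directly: given a $\preceq$-onto homomorphism $h:\mathcal M \to \mathcal M'$ and a $\preceq$-morphism $f:\mathcal P \to \mathcal M'$, you pick $m_i$ with $f(p_i)\preceq h(m_i)$ and build the lift $\tilde f(a) = \sum g_i(a)m_i$ by hand. This is in effect an unrolled version of the paper's route through Proposition~\ref{epicspl}---more elementary and self-contained, at the cost of re-proving a special case of that proposition. Both are valid; the paper's route is shorter given the earlier machinery.

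Your closing remark about the $\preceq$-onto requirement on the $g_i$ is well taken: the paper's own proof does not verify that $g_i = \pi_i g$ is $\preceq$-onto, and neither direction of the argument actually uses that property. So your reading---that the substantive content is the relation $a \preceq \sum g_i(a)p_i$ with the $g_i$ being $\preceq$-morphisms, and that $\preceq$-ontoness is an extra (unverified) qualifier in the statement---is consistent with what the paper actually proves.

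One small point worth tightening: in your chain $f\bigl(\sum g_i(a)p_i\bigr) \preceq \sum g_i(a) f(p_i)$, Definition~\ref{mor}(iv) gives $f(cb) = c f(b)$ only for $c \in \tT$, whereas $g_i(a)$ lies in $\mathcal A$. Since $\tTz$ generates $(\mathcal A,+)$ one can decompose $g_i(a)$ into tangibles and use (iii) together with (iv), but this step deserves a word. (The paper sidesteps it in the converse because there the outer map $f$ is the free-module homomorphism $\pi$, which is genuinely $\mathcal A$-linear.)
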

\begin{proof}
The  assertion can be copied almost word for word from the standard
proof, for example from \cite[p.~493]{Row08}. We take the free
systemic module  $\mathcal F = (\mathcal A ^{(I)},\tT^{(I)}, (-),
\preceq )$ with base $\{ e_i : i \in I\}$, and the $\preceq$-onto
 homomorphism $f:\mathcal F \to \mathcal P$ given by $f(e_i) = p_i, \
\forall i\!\in\!I.$ Also we define the canonical projections $\pi _j
: \mathcal F \! \to  \! \mathcal A$ by $\pi _j (e_i) = \delta
_{ij}.$  Thus $c = \sum \pi _i(c)e_i$ for any $c \in \mathcal F$.

$(\Rightarrow)$ In view of Proposition~\ref{epicspl} (or
\ref{epicspl1}), $f$ is $\preceq$-split (resp.~h-split), so we take
a $\preceq$-morphism (resp.~ homomorphism) $g: \mathcal P \to
\mathcal F$ with $f g \succeq 1_{\mathcal P}.$  Put $g_i = \pi _i g:
\mathcal P \to \mathcal A$. Then any $a\in \mathcal P$ satisfies
$$\begin{aligned} a & \preceq fg (a)= f \Big(\sum _i \pi _i (g (a)) e_i\Big)  \\ &
= \sum_i f( g_i(a) e_i)= \sum_i  g_i(a) f(e_i) = \sum_i g_i (a)
p_i,\end{aligned}$$ as desired. If  $g$ is  a homomorphism then each
$g_i$ is  a homomorphism, seen by checking components.

$(\Leftarrow)$ Defining $g: \mathcal P \to \mathcal F$ by $g(a) =
\sum g_i(a)e_i,$ we have $$fg(a) = \sum g_i(a)f(e_i) = \sum
g_i(a)p_i\succeq   a.$$ Thus $fg \succeq 1_{\mathcal P},$ so
$\mathcal P$ is $(\preceq,h)$-projective, by Proposition~\ref{epicspl}.
When each of the $g_i$ is a homomorphism then clearly $g$ is a
homomorphism.

\end{proof}

\begin{prop}\label{Dualbas1}
Suppose a module pseudo-system $(\mathcal P, \tT_\mathcal
P,(-),\preceq)$ is generated by $\{p_i \in \mathcal P:i \in I\}$.
Then $\mathcal P$ is $(\succeq,h)$-projective if and only if there are
$\succeq$-onto $\succeq$-morphisms $g_i : \mathcal P \to \mathcal A$
such that for all $a\in \mathcal A$ we have $a \succeq\sum g_i(a)
p_i,$ where $g_i(a) = \zero$ for all but finitely many $i$.
\end{prop}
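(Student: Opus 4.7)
The plan is to mirror the proof of Proposition~\ref{Dualbas} line by line, with $\succeq$ everywhere in place of $\preceq$, and with Proposition~\ref{epicspl10} invoked in place of Proposition~\ref{epicspl}. As in that proof, I would form the free systemic module $\mathcal F = (\mathcal A^{(I)}, \tT^{(I)}, (-), \preceq)$ with basis $\{e_i : i \in I\}$, together with the homomorphism $f:\mathcal F \to \mathcal P$ determined by $f(e_i) = p_i$, and the canonical projections $\pi_j: \mathcal F \to \mathcal A$, $\pi_j(e_i) = \delta_{ij}$, so that any $c \in \mathcal F$ has the unique finite-support expansion $c = \sum_i \pi_i(c)\, e_i$. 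By Lemma~\ref{monep7}, the hypothesis that the $p_i$ generate $\mathcal P$ (in the usual sense) makes $f$ onto, and therefore $\succeq$-onto.

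For $(\Rightarrow)$ I would use the $(\succeq,h)$-version of Proposition~\ref{epicspl10} to produce a $\succeq$-morphism $g: \mathcal P \to \mathcal F$ with $\one_{\mathcal P} \succeq fg$. Setting $g_i := \pi_i g: \mathcal P \to \mathcal A$ gives a $\succeq$-morphism (composition of a $\succeq$-morphism with a homomorphism). Since $f$ is a homomorphism and $g(a)$ has finite support,
$$a \;\succeq\; fg(a) \;=\; f\Big(\sum_i g_i(a)\, e_i\Big) \;=\; \sum_i g_i(a)\, f(e_i) \;=\; \sum_i g_i(a)\, p_i,$$
with $g_i(a) = \zero$ for all but finitely many $i$, as required.

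For $(\Leftarrow)$ I would define $g: \mathcal P \to \mathcal F$ by $g(a) = \sum_i g_i(a)\, e_i$, and verify that $g$ is a $\succeq$-morphism by checking the axioms componentwise (this uses the componentwise nature of $\succeq$ on $\mathcal F$ and the fact that each $g_i$ is a $\succeq$-morphism). Then
$$fg(a) \;=\; \sum_i g_i(a)\, f(e_i) \;=\; \sum_i g_i(a)\, p_i \;\preceq\; a,$$
so $\one_{\mathcal P} \succeq fg$, i.e., $g$ $\succeq$-splits $f$. By Proposition~\ref{epicspl10} (iii)$\Rightarrow$(i), $\mathcal P$ is $\succeq$-projective, and the homomorphism variant yields $(\succeq,h)$-projectivity.

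The only real subtlety, which I would flag rather than grind through, is tracking the direction of the inequality at each step (the $\succeq$-splitting condition reads $\one_{\mathcal P} \succeq fg$, so in the converse direction one needs $fg(a) \preceq a$, not $\succeq a$); and checking that $g$ built from the $g_i$ really is a $\succeq$-morphism, which reduces to componentwise verification in $\mathcal F$ since $\preceq$ is extended componentwise per Definition~\ref{cop}. The $\succeq$-onto assertion on the $g_i$ in the forward direction is inherited from the projections $\pi_i:\mathcal F \to \mathcal A$ being onto together with $g$ being $\succeq$-onto (the latter follows as in Lemma~\ref{lem: 3.14}(i) applied to the $\succeq$-splitting); everything else is completely parallel to the $\preceq$-case already handled.
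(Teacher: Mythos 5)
Your proof is correct in essence and follows exactly the approach the paper takes: the paper's own proof of Proposition~\ref{Dualbas1} simply says ``The analogous argument to the proof of Proposition~\ref{Dualbas} works, where $\preceq$ and $\succeq$ are interchanged,'' and you have spelled this out in detail, with the correct inequality directions and the correct appeal to Proposition~\ref{epicspl10} in place of Proposition~\ref{epicspl}. Two small remarks: the invocation of Lemma~\ref{monep7} is not really needed, since the hypothesis of Proposition~\ref{Dualbas1} already asserts generation in the usual sense (Lemma~\ref{monep7} is precisely what lets the authors state the hypothesis this way rather than as $\succeq$-generation), so $f$ is onto directly; and your attempted justification that the $g_i$ are $\succeq$-onto via Lemma~\ref{lem: 3.14}(i) does not work as stated, because that lemma gives $\succeq$-ontoness of the \emph{first} factor $\pi = f$ in the composite $\one_{\mathcal P} \succeq fg$, not of the section $\nu = g$. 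This is, however, a gap that the paper's own proof of Proposition~\ref{Dualbas} (and, by ``analogy,'' of Proposition~\ref{Dualbas1}) also leaves unaddressed — the $\preceq$-onto/$\succeq$-onto qualifier on the $g_i$ is neither verified in the forward direction nor used in the converse — so you are in good company in flagging rather than resolving it.
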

\begin{proof}
The analogous argument to the proof of Proposition~\ref{Dualbas}
works, where $\preceq$ and $ \succeq$ are interchanged.
\end{proof}

\section{Versions of Schanuel's Lemma}\label{projreso}$ $

We   turn to  systemic versions of Schanuel's Lemma which could
eventually relate to systemic projective dimension (which is work in
progress). In the classical case, one reduces to the following case:
Given two exact sequences $\mathcal K \to \mathcal P \overset {f}
\to \mathcal M$ and $\mathcal K'  \to \mathcal P' \overset {f'} \to
\mathcal M$, with $f,f'$ epic and $\mathcal P, \mathcal P'$
projective, one concludes that $\mathcal P \oplus \mathcal K' =
\mathcal P' \oplus \mathcal K.$ However, for modules over general
semirings, one cannot expect this to hold.


 The right notion of
exactness for modules over semirings is rather subtle.
Still, one can mimic the standard proof \cite[pp.~165--167]{La} of
Schanuel's Lemma for modules over rings, by considering our more
general version of splitting, and avoiding mixing submodules with
kernels of homomorphisms (which are congruences). To this end, we
introduce the following definition of congruence kernels.

\begin{defn}\label{congker}
Let $f:\mathcal M \to \mathcal N$ be a $\preceq$-morphism.
\begin{enumerate}\eroman
    \item
    The \textbf{N-congruence kernel} $\ker_N f$  of $f$ is defined to be the following set:
    \[
\ker_N f:=\{ (a_0,a_1) \in \mathcal M \times \mathcal M : f(a_0) =
f(a_1) \}.
    \]
 \item  The \textbf{$\preceq$-congruence kernel} $\ker_{N, \preceq} f$  of $f$ is defined to be the following set:
    \[
\ker_{N,\preceq} f:=\{ (a_0,a_1) \in \mathcal M \times \mathcal M : f(a_0) =
f(a_1) ,\quad f(a_0), f(a_1) \in \mathcal N_\textrm{Null} \}.
    \]
\end{enumerate}
\end{defn}

\begin{lem}
Let $f:\mathcal M \to \mathcal N$ be a homomorphism of systems. Then
$\ker_N f$
and $ \ker_{N,\preceq} f$ 
are submodules
of~$\mathcal M \times \mathcal M$. Also, $\ker_N f$ and  $
\ker_{N,\preceq} f$ are congruences on $\mathcal M$.
\end{lem}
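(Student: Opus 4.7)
The plan is to verify each of the four assertions directly from the definitions, using only three ingredients: that $f$ is a homomorphism (hence respects $+$ and the $\mathcal A$-action), that $\mathcal N_{\operatorname{Null}}$ is a $\tT$-submodule of $\mathcal N$ (needed only for $\ker_{N,\preceq}f$), and that componentwise operations turn $\mathcal M\times\mathcal M$ into an $\mathcal A$-systemic module. None of these is deep; the work is purely organizational.

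For the submodule statement about $\ker_N f$, I would just check that the set $\{(a_0,a_1):f(a_0)=f(a_1)\}$ contains $(\zero,\zero)$ and is closed under componentwise sums and the $\mathcal A$-action, both of which drop out of $f(a_0+b_0)=f(a_0)+f(b_0)=f(a_1)+f(b_1)=f(a_1+b_1)$ and $f(ca_i)=cf(a_i)$. For $\ker_{N,\preceq}f$ one reruns the same check, now also noting that the extra condition $f(a_i)\in\mathcal N_{\operatorname{Null}}$ is preserved under sums and $\mathcal A$-multiples precisely because $\mathcal N_{\operatorname{Null}}$ is a submodule of~$\mathcal N$.

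For the congruence statement, reflexivity of $\ker_N f$ is immediate, symmetry and transitivity follow from the corresponding properties of equality in $\mathcal N$, and compatibility with the operations is exactly the computation already performed in the submodule step. The same three verifications transfer word for word to $\ker_{N,\preceq}f$, provided one is careful about reflexivity.

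The only genuine subtlety, and the step I expect to be the only real obstacle, is that a diagonal pair $(a,a)$ lies in $\ker_{N,\preceq}f$ if and only if $f(a)\in\mathcal N_{\operatorname{Null}}$, so $\operatorname{diag}_{\mathcal M}$ is not automatically contained in $\ker_{N,\preceq}f$. I would resolve this either by restricting attention to the submodule $\ker_{\operatorname{Mod},\mathcal M}f$, on which the full diagonal \emph{is} included and $\ker_{N,\preceq}f$ is a congruence in the strict sense of the paper, or equivalently by enlarging $\ker_{N,\preceq}f$ to $\ker_{N,\preceq}f\cup\operatorname{diag}_{\mathcal M}$; the earlier submodule and equivalence verifications are unaffected by this enlargement, since all three conditions (closure under $+$, under the $\mathcal A$-action, and transitivity) remain trivial once one pair of the two terms is already diagonal.
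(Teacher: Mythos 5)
The paper disposes of this lemma with the single line ``This is clear,'' so your write-up is already far more detailed than the source. For $\ker_N f$ your verification is exactly what is intended: closure of $\{(a_0,a_1):f(a_0)=f(a_1)\}$ under componentwise addition and the $\mathcal A$-action follows at once from $f$ being a homomorphism, and reflexivity, symmetry, transitivity are inherited from equality in $\mathcal N$. You are also right to flag that $\ker_{N,\preceq}f$ contains a diagonal pair $(a,a)$ only when $f(a)\in\mathcal N_{\operatorname{Null}}$; under the paper's own definition (a congruence must contain $\diag_{\mathcal M}$), the blanket assertion that $\ker_{N,\preceq}f$ is ``a congruence on $\mathcal M$'' is not literally correct unless $f$ is null. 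Your first proposed repair---read $\ker_{N,\preceq}f$ as a congruence on the submodule $\ker_{\Mod,\mathcal M}f=f^{-1}(\mathcal N_{\operatorname{Null}})$, whose diagonal it does contain---is correct, and is consistent with how $\ker_{N,\preceq}$ is actually used later in Theorem~\ref{trSh}.

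Your second proposed repair, however, does not work, and the sentence claiming closure under $+$ is ``trivial once one pair of the two terms is already diagonal'' is the gap. Take $(a_0,a_1)\in\ker_{N,\preceq}f$ and $(c,c)\in\diag_{\mathcal M}$. The componentwise sum is $(a_0+c,\,a_1+c)$, which does satisfy $f(a_0+c)=f(a_1+c)$ (so it is in $\ker_N f$), but $f(a_0+c)=f(a_0)+f(c)$ need not lie in $\mathcal N_{\operatorname{Null}}$: from $f(a_0)\succeq\zero$ one only gets $f(a_0)+f(c)\succeq f(c)$, and $f(c)$ is arbitrary. Nor need the sum be diagonal, since $(\mathcal M,+)$ is not cancellative in the cases of interest. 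A concrete failure in the supertropical setting: let $\mathcal M$ be free of rank $2$ and $f(x,y)=x+y$; with $a_0=(1,1)$, $a_1=(1^\nu,0)$, $c=(0,5)$ one has $(a_0,a_1)\in\ker_{N,\preceq}f$, but $(a_0+c,a_1+c)=\bigl((1,5),(1^\nu,5)\bigr)$ is not diagonal and $f$ sends both entries to the tangible $5\notin\mathcal N_{\operatorname{Null}}$. So $\ker_{N,\preceq}f\cup\diag_{\mathcal M}$ is not even a submodule of $\mathcal M\times\mathcal M$. Keep the first repair and drop the second.
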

\begin{proof}
This is clear.
\end{proof}


\begin{thm}[Semi-Schanuel]\label{trSh}
    Suppose we have $\preceq$-morphisms $\mathcal P_1  \overset {f_1} \longrightarrow \mathcal M$ and
    $\mathcal P_2  \overset {f_2} \longrightarrow \mathcal M$  with $f_1$ and $f_2$  onto. (We are
    not assuming that either $\mathcal P_i $ is projective.) Let
 $$\mathcal P = \{ (b_1, b_2)   :\quad b_i \in \mathcal P_i, \ f_1(b_1) =
        f_2(b_2)\},$$   a submodule of $\mathcal P_1 \oplus \mathcal P_2$,
        together the restriction $\pi_i^{\operatorname{res}}$ of
        the projection  $\pi_i: \mathcal P \to \mathcal P_i$ on the $i$ coordinate, for $i = 1,2$.
    \begin{enumerate}\eroman
        \item   $\pi_1^{\operatorname{res}}:\mathcal P \to \mathcal P_{1}$ is an onto homomorphism and
        and there is an
        onto homomorphism $$ \ker_N \pi_1^{\operatorname{res}} \to \ker_N  f_2, $$
        (This part is
        purely semiring-theoretic and does not require a system.)
            \item The maps $f_1\pi_1^{\operatorname{res}} ,f _2 \pi_2^{res}: \mathcal P \to \mathcal M$ are the same.
     \item
        In the systemic setting, $\pi_1^{\operatorname{res}}$ also induces
         $\preceq$-quasi-isomorphism
     \[
    \pi_{N,\preceq}:  \ker_{N, \preceq} \pi_1^{\operatorname{res}} \to \ker_{N, \preceq} f_2.\]

     \item In (iii), if $f_1$ also is null-monic, we have the following $\preceq$-quasi-isomorphism:
    \[
     \ker_{N, \preceq}  f_1\pi_1^{\operatorname{res}} \to \ker_{N, \preceq}  f_2.
     \]

 \item
 If $\mathcal P_1 $ is projective,   then it is a retract
 of $\mathcal P$ with respect to the projection $\pi_1:\mathcal P \to \mathcal
 P_1$.

 \item
 If $\mathcal P_1 $ is $\preceq$-projective, then it is a
 $\preceq$-retract
 of $\mathcal P$ with respect to the projection $\pi_1:\mathcal P \to \mathcal P_1$, and $\mathcal P$ is the $\preceq$-direct sum of
 $\mathcal P_1$ and $(\one_\mathcal P (-) \nu_1 \pi_1)(\mathcal P)$.

    \end{enumerate}
\end{thm}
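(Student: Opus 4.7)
The plan is to dispatch the six parts in order, relying on the pullback construction of $\mathcal P$ together with Theorem \ref{splitdir} and Proposition \ref{epicspl} for the last two items.

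For (i), $\pi_1^{\operatorname{res}}$ is a homomorphism as a restriction of the coordinate projection; it is onto because, for $b_1 \in \mathcal P_1$, ontoness of $f_2$ produces some $b_2 \in \mathcal P_2$ with $f_2(b_2) = f_1(b_1)$, so $(b_1, b_2) \in \mathcal P$. To build the map on $N$-congruence kernels, I observe that $((b_1, b_2),(b_1', b_2')) \in \ker_N \pi_1^{\operatorname{res}}$ forces $b_1 = b_1'$, hence $f_2(b_2) = f_1(b_1) = f_2(b_2')$, and the assignment $((b_1, b_2),(b_1, b_2')) \mapsto (b_2, b_2')$ is a well-defined homomorphism into $\ker_N f_2$. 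For surjectivity, given $(c,c') \in \ker_N f_2$, ontoness of $f_1$ produces $b_1$ with $f_1(b_1) = f_2(c) = f_2(c')$, giving the preimage $((b_1,c),(b_1,c'))$. Item (ii) is immediate from $f_1(b_1) = f_2(b_2)$ for $(b_1,b_2) \in \mathcal P$.

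For (iii), I reuse the formula to define $\pi_{N,\preceq}$, now restricted to the $\preceq$-congruence kernel. The image lies in $\ker_{N,\preceq} f_2$: if $b_1 \succeq \zero$, then by Remark \ref{morprop}(i), $f_2(b_2) = f_1(b_1) \succeq \zero$, and symmetrically for $b_2'$. Null-monicity follows by unwinding the definitions: if $(b_2, b_2') \succeq \zero$ componentwise and $b_1 \succeq \zero$, then the original pair lies in the Null-part of $\ker_{N,\preceq}\pi_1^{\operatorname{res}}$, so by Lemma \ref{kertriv} the map is null-monic. For $\preceq$-ontoness, given $(c,c') \in \ker_{N,\preceq} f_2$, I pick $b_1 \in \mathcal P_1$ with $f_1(b_1) = f_2(c) \succeq \zero$ using ontoness of $f_1$, and use the $\preceq$-onto flexibility of Definition \ref{precont1} to arrange a preimage surpassing $(c,c')$. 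Item (iv) reduces to (iii): by (ii), $\ker_{N,\preceq} f_1\pi_1^{\operatorname{res}} = \ker_{N,\preceq} f_2\pi_2^{\operatorname{res}}$, and null-monicity of $f_1$ (applied via Lemma \ref{kertriv}) forces the first-coordinate component of any such element to satisfy $b_1 \succeq \zero$, placing us under the hypothesis of (iii).

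For (v), projectivity of $\mathcal P_1$ applied to the onto homomorphism $\pi_1^{\operatorname{res}}: \mathcal P \to \mathcal P_1$ lifts $\one_{\mathcal P_1}$ to a homomorphism $\nu_1: \mathcal P_1 \to \mathcal P$ with $\pi_1^{\operatorname{res}} \nu_1 = \one_{\mathcal P_1}$, exhibiting $\mathcal P_1$ as a retract. For (vi), $\preceq$-projectivity together with Proposition \ref{epicspl} (or Remark \ref{remark: for later use}) supplies a $\preceq$-morphism $\nu_1$ with $\one_{\mathcal P_1} \preceq \pi_1^{\operatorname{res}} \nu_1$, and Theorem \ref{splitdir}(i) then produces the $\preceq$-direct sum decomposition of $\mathcal P$ into $\pi_1^{\operatorname{res}}(\mathcal P) = \mathcal P_1$ and $(\one_{\mathcal P} (-) \nu_1 \pi_1^{\operatorname{res}})(\mathcal P)$.

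The main obstacle, as hinted above, lies in the $\preceq$-ontoness step of (iii): lifting a null element of $\mathcal M$ through $f_1$ to a null element of $\mathcal P_1$ is not automatic from ontoness of $f_1$. I expect either to exploit the $\preceq$-onto relaxation (which only asks for surpassing, not equality) so that an enlarged preimage suffices, or to argue that the definition of $\ker_{N,\preceq}$ is insensitive to whether $b_1$ itself is null provided $f_1(b_1)$ is. The careful bookkeeping between the two flavors of Null condition, in $\mathcal P_1$ versus in $\mathcal M$, is the most delicate point of the argument.
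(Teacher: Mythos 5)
Your proposal follows the paper's construction and dispatches parts (i), (ii), (v), and (vi) with essentially the same arguments the paper uses. The concern you raise about part (iii) is genuine, and it is worth noting that the paper's own proof has the same unresolved step: given $(b_2,b_2')\in \ker_{N,\preceq} f_2$, it chooses $b_1$ with $f_1(b_1)=f_2(b_2)$ (using ontoness of $f_1$) and then asserts ``clearly'' that $((b_1,b_2),(b_1,b_2'))\in \ker_{N,\preceq}\pi_1^{\operatorname{res}}$. But membership in $\ker_{N,\preceq}\pi_1^{\operatorname{res}}$ requires $\pi_1^{\operatorname{res}}$ of both pairs to land in $(\mathcal P_1)_{\operatorname{Null}}$, i.e.\ $b_1\succeq\zero$, and this does not follow from $f_1(b_1)\succeq\zero$ unless $f_1$ is null-monic — a hypothesis the theorem reserves for part (iv). So the bookkeeping between ``Null in $\mathcal P_1$'' and ``Null in $\mathcal M$,'' which you correctly flag as the delicate point, is precisely where the published argument is thin; ontoness of $f_1$ alone is not enough to lift a null element of $\mathcal M$ to a null element of $\mathcal P_1$, and the $\preceq$-onto relaxation does not obviously save it either, since enlarging $(b_2,b_2')$ does not control the first coordinate of the preimage.

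On part (iv), your reduction to (iii) is close in spirit to the paper but not quite the same, and needs a small adjustment. Null-monicity of $f_1$ does give $b_1\succeq\zero$ whenever $f_1(b_1)\succeq\zero$ (the cyclic submodule $\mathcal A b_1$ is $f_1$-null, so it sits in $\ker_{\Mod,\mathcal P_1}f_1\subseteq(\mathcal P_1)_{\operatorname{Null}}$ by Lemma~\ref{kertriv}). However, $\ker_{N,\preceq} f_1\pi_1^{\operatorname{res}}$ only asks $f_1(b_1)=f_1(b_1')\in\mathcal M_{\operatorname{Null}}$, not $b_1=b_1'$, so it is strictly larger than $\ker_{N,\preceq}\pi_1^{\operatorname{res}}$ in general; you cannot literally ``place yourself under the hypothesis of (iii).'' The paper sidesteps this by proving (iv) directly: the Null condition there lives in $\mathcal M$ (not $\mathcal P_1$), so ontoness of $f_1$ suffices for the surjectivity step and null-monicity of $f_1$ is used only to prove the map null-monic. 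Your argument reaches the same conclusions, but the intermediate claim that (iv) reduces to (iii) should be replaced by the direct verification.
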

\begin{proof}
Clearly $\mathcal P $ is a submodule of $\mathcal P_1 \oplus \mathcal P_2$. We modify the standard proof.

(i) We first prove that $\pi_1^{\textrm{res}}$ is an onto homomorphism. In fact, clearly $\pi_1$ is a homomorphism since it is a projection and hence its restriction $\pi_1^{\textrm{res}}$ is also a homomorphism. Now, since $f_2$ is onto, for any $b_1 \in \mathcal P_{1}$ there is $b_2 \in \mathcal P_2$ such that $f_1(b_1)=f_2(b_2),$ implying $(b_1,b_2)\in \mathcal P$. Hence $\pi_1$ restricts to an onto homomorphism $\pi_1^{\textrm{res}}:\mathcal P \to \mathcal P_{1}$. For the remaining part of (i), one can easily see that
    \[
    \ker_N \pi_1^{\operatorname{res}} \subseteq \{((b_1, b_2),(b_1', b_2')) : b_1=b_1'  \}
    \]
    But, by the definition of $\pi_1^{\operatorname{res}}$ and $\mathcal P$, we have that, for $((b_1, b_2),(b_1, b_2')) \in \ker_N\pi_1^{\operatorname{res}}$,
    \[
    f_2(b_2)
    = f_1(b_1) = f_2(b_2'),
    \]
    which means that $(b_2,b_2') \in \ker_N f_2.$ In other
    words,
    \[
    \ker_N \pi _1^{res} = \{ ((b_1, b_2),(b_1, b_2')): b_1 \in \mathcal P_{1},\ (b_2,b_2') \in \ker
    _{N} f_2
    \}.
    \]
    We define an onto homomorphism as follows:
    \[
    \pi:\ker _{N} \pi _1^{res} \to \ker_N f_2, \quad ((b_1, b_2),(b_1, b_2')) \mapsto ( b_2 , b_2') ,
    \]
    where $ b_1 \in \mathcal P_{1}.$ 



(ii) $f_1\pi_1^{\operatorname{res}} (b_1,b_2) = f_1(b_1) = f_2(b_2)
= f _2 \pi_2^{res} (b_1,b_2).$

 (iii) The proof for the  $\preceq$-quasi isomorphism $\ker_{N, \preceq} \pi_1^{\operatorname{res}} \to \ker_{N, \preceq} f_2$ is similar to the proof of (i). 
Notice that $ \ker_{N,\preceq} \pi_1^{\operatorname{res}} $ consists
of those pairs $((b_1, b_2),(b_1', b_2'))$ in $\mathcal P \times \mathcal P $ for which $b_1 = b_1'\in
\mathcal P_{1,\textrm{Null}}$. But then
\[
f_2(b_2) = f_1(b_1) = f_1(b_1') = f_2(b_2'),
\]
which are all in $\mathcal M_\textrm{Null}$ since
$f_1(b_1)\in\mathcal M_\textrm{Null}$, implying $(b_2,b_2') \in
\ker_{N,\preceq} f_2 $. We define the $\preceq$-morphism
$\pi_{N,\preceq}:\ker _{N,\preceq} \pi _1^{\textrm{res}} \to \ker_{N,\preceq}
f_2$ by
\[
((b_1, b_2),(b_1', b_2')) \mapsto ( b_2 , b_2').
\]

 Suppose that $(b_2,b_2') \in \ker_{N,\preceq} f_2$, i.e.,
$f_2(b_2) = f_2(b_2') \in \mathcal M_\textrm{Null}$. Since $f_1$ is
onto, we can find $b_1,b_1'$ in~$\mathcal P_1$ such that
$f_1(b_1)=f_2(b_2)$ and $f_1(b_1')=f_2(b_2')$. Clearly, for any
$(b_2,b_2') \in \ker_{N,\preceq} f_2$, we have that $((b_1,
b_2),(b_1, b_2')) \in \ker _{N,\preceq} \pi _1^{res}$ which shows
that $\pi_{N,\preceq}$ is onto. All it remains to show is that
$\pi_{N,\preceq}$ is null-monic. Suppose that
\[
\pi_{N,\preceq}((b_1, b_2),(b_1, b_2'))=(b_2,b_2') \in (\ker_{N,\preceq}
f_2)_{\textrm{Null}}.
\]
This means that $b_2,b_2' \in (\mathcal P_2)_{\textrm{Null}}$. It
follows that $((b_1, b_2),(b_1, b_2'))$ is an element of $(\ker
_{N,\preceq} \pi _1^{res})_{\textrm{Null}}$, showing that
$\pi_{N,\preceq}$ is null-monic by Lemma \ref{kertriv}. Thus
$\pi_{N,\preceq}$ is an  $\preceq$-quasi-isomorphism.

(iv) The proof for the second $\preceq$-quasi-isomorphism $\ker_{N,
\preceq} f_1\pi_1^{\operatorname{res}} \to \ker_{N, \preceq}  f_2$
is analogous. Slightly abusing notation, we define the following
$\preceq$-morphism:
\[
\pi_\preceq:\ker_{N, \preceq}  f_1\pi_1^{\operatorname{res}} \to
\ker_{N, \preceq}  f_2, \quad ((b_1,b_2),(b_1',b_2')) \mapsto
(b_2,b_2').
\]
One can easily see that $\pi_\preceq$ is null-monic from  exactly
the same argument as above, along with the hypothesis that $f_1$ is
null-monic. Now, suppose that $(b_2,b_2') \in \ker_{N, \preceq}
f_2$. In other words, we have that $f_2(b_2) = f_2(b_2')$ and
$f_2(b_2),f_2(b_2') \in \mathcal M_{\textrm{Null}}$. Again, since
$f_1$ is onto, we can find an element
$\alpha:=((b_1,b_2),(b_1',b_2')) \in \mathcal P \times \mathcal P$. 
We claim that $\alpha \in \ker_{N, \preceq}
f_1\pi_1^{\operatorname{res}}$; in this case,
$\pi_\preceq(\alpha)=(b_2,b_2')$, showing that $\pi_\preceq$ is
onto. In fact, we have
\[
f_1\pi_1^{\operatorname{res}}(b_1,b_2)=f_1(b_1)=f_2(b_2) =
f_2(b_2')=f_1(b_1')=f_1\pi_1^{\operatorname{res}}(b_1',b_2').
\]
Furthermore, since $f_2(b_2) \in \mathcal M_{\textrm{Null}}$, we
have that $f_1\pi_1^{\operatorname{res}}(b_1,b_2),
f_1\pi_1^{\operatorname{res}}(b_1',b_2') \in \mathcal
M_{\textrm{Null}}$, proving our claim.


(v) Since $\mathcal P_1$ is projective  and $\pi_1:\mathcal P \to
\mathcal P_1$ is onto, $\pi_1$ splits via $\nu_1$ with $\pi_1 \nu_1
= 1$.

(vi) Take $\nu_1: \mathcal P_1 \to \mathcal P$ to be a
$\preceq$-morphism $\preceq$-splitting $\pi_1$ via the identity map
on $\mathcal P_1$, and we can apply  Theorem \ref{splitdir} since
$\pi_1$ is a homomorphism.

\end{proof}

The proof of (iv) seems to require a rather strong hypothesis, which
it would be nice to be able to delete.

Here are some $\preceq$-versions.

\begin{lem}[Semi-Schanuel, onto $\preceq$-version]\label{trSh118} 
Given homomorphisms $\mathcal P_1  \overset {f_1} \longrightarrow
\mathcal M$ and
    $\mathcal P_2  \overset {f_2} \longrightarrow \mathcal M$  with~$f_2$
    onto,
    \begin{enumerate}\eroman
        \item
        There is a submodule $$\mathcal P = \{ (b_1, b_2)   : f_1(b_1) =
        f_2(b_2)\}$$ of $\mathcal P_1 \oplus \mathcal P_2$.  Let $\pi_i$ denote the
    projection to $\mathcal P_i$ on the $i$-th coordinate, and $\pi_i^{\operatorname{res}}$ is its restriction to $\mathcal P$.
Then  $\pi_1^{\operatorname{res}}$ is onto.

          \item
There is a natural   
     homomorphism $ \pi:\ker _{N} \pi _1^{res} \to  \mathcal P_{2} $
    via
    \[
   \quad ((b_1, b_2),(b_1, b_2')) \mapsto   b_2 (-) b_2'  ,
    \] which induces  a  $\preceq$-morphism $$ \ker_N \pi_1^{\operatorname{res}} \to \ker_{\Mod,\mathcal P_2} f_2. $$
     \item $\ker_{\Mod,\mathcal P}\pi_1^{\operatorname{res}} =   \{ (b_1, b_2) \in \mathcal P: \ b_1 \succeq \zero,\ b_2   \in
     \ker_{\Mod,\mathcal P}  f_2\}.$
     \item $f_1\pi_1^{\operatorname{res}} (b_1,b_2)  =    f _2
\pi_2^{res} (b_1,b_2).$

 \item
 If $\mathcal P_1 $ is h-projective, then it is a
 h-retract
 of $\mathcal P$ with respect to the projection $\pi_1:\mathcal P \to \mathcal P_1$, and $\mathcal P$ is the h-direct sum of
 $\mathcal P_1$ and $(\one_\mathcal P (-) \nu_1 \pi_1)(\mathcal P)$.


    \end{enumerate}
\end{lem}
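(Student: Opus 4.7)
The proof plan is to mirror the pattern of Theorem~\ref{trSh} but with the roles of $f_1$ and $f_2$ exchanged (since now $f_2$, not $f_1$, is assumed onto), and to replace the congruence-kernel bookkeeping by null-module-kernel bookkeeping.

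For (i), I would first observe that $\mathcal P$ is closed under the $\mathcal A$-action and under coordinate-wise addition because $f_1,f_2$ are homomorphisms, so it is a submodule; $\pi_i^{\operatorname{res}}$ is a homomorphism as a restriction of a coordinate projection. To see $\pi_1^{\operatorname{res}}$ is onto, given $b_1\in\mathcal P_1$, use that $f_2$ is onto to choose $b_2\in\mathcal P_2$ with $f_2(b_2)=f_1(b_1)$; then $(b_1,b_2)\in\mathcal P$ and $\pi_1^{\operatorname{res}}(b_1,b_2)=b_1$. Part (iv) is immediate from the defining equation of $\mathcal P$.

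For (ii), as in Theorem~\ref{trSh}(i), elements of $\ker_N\pi_1^{\operatorname{res}}$ have the form $((b_1,b_2),(b_1,b_2'))$ with common first coordinate, and then $f_2(b_2)=f_1(b_1)=f_2(b_2')$. Define $\pi((b_1,b_2),(b_1,b_2'))=b_2(-)b_2'$; this is a homomorphism of $\mathcal A$-modules since it is the difference of the two projections on the second coordinate, both of which are restrictions of coordinate maps. To land in $\ker_{\Mod,\mathcal P_2}f_2$, note $f_2(b_2(-)b_2')=f_2(b_2)(-)f_2(b_2')=(f_2(b_2))^\circ\succeq\zero$; the $\mathcal A$-submodule generated by such an element is $f_2$-null because $f_2$ is a homomorphism, hence lies in $\ker_{\Mod,\mathcal P_2}f_2$ by definition. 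The assertion that the induced map is a $\preceq$-morphism is a routine check against Definition~\ref{mor}.

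For (iii), one inclusion is direct: if $(b_1,b_2)\in\ker_{\Mod,\mathcal P}\pi_1^{\operatorname{res}}$ then in particular $b_1=\pi_1^{\operatorname{res}}(b_1,b_2)\succeq\zero$, and since $f_1$ is a homomorphism $f_2(b_2)=f_1(b_1)\succeq\zero$, so $\mathcal A\cdot b_2$ is $f_2$-null and $b_2\in\ker_{\Mod,\mathcal P_2}f_2$. Conversely, if $b_1\succeq\zero$, $b_2\in\ker_{\Mod,\mathcal P_2}f_2$ and $(b_1,b_2)\in\mathcal P$, then the $\mathcal A$-submodule generated by $(b_1,b_2)$ consists of elements $(ab_1,ab_2)$, and $\pi_1^{\operatorname{res}}(ab_1,ab_2)=ab_1\succeq\zero$ by Definition~\ref{precedeq07}(iv); hence this cyclic submodule is $\pi_1^{\operatorname{res}}$-null, so is contained in $\ker_{\Mod,\mathcal P}\pi_1^{\operatorname{res}}$.

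For (v), the main observation is that $\pi_1^{\operatorname{res}}$ is a (genuinely) onto homomorphism by (i), and in particular $\preceq$-onto. By Proposition~\ref{epicspl1} applied to the h-projective module $\mathcal P_1$, $\pi_1^{\operatorname{res}}$ h-splits, yielding a homomorphism $\nu_1:\mathcal P_1\to\mathcal P$ with $\one_{\mathcal P_1}\preceq\pi_1^{\operatorname{res}}\nu_1$; this exhibits $\mathcal P_1$ as an h-retract of $\mathcal P$. The direct-sum decomposition then follows verbatim from Theorem~\ref{splitdir}(i) (the h-splitting version), applied to $\pi=\pi_1^{\operatorname{res}}$ and $\nu=\nu_1$, giving $\mathcal P$ as the h-direct sum of $\pi_1^{\operatorname{res}}(\mathcal P)=\mathcal P_1$ and $(\one_{\mathcal P}(-)\nu_1\pi_1)(\mathcal P)$. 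The main subtlety I expect is just keeping track of which kernel notion applies in (ii)--(iii); once one works purely with module-theoretic (rather than congruence-theoretic) null-kernels, everything reduces to the same style of bookkeeping used in Theorem~\ref{trSh}.
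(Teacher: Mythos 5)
Your proposal is correct and follows essentially the same route as the paper's own proof: (i) and (iv) are handled identically; (ii) defines $\pi$ on pairs with common first coordinate and observes $f_2(b_2(-)b_2')\succeq\zero$; (iii) uses that $\ker_{\Mod,\mathcal P}\pi_1^{\operatorname{res}}$ is $\pi_1^{\operatorname{res}}$-null (since $\pi_1^{\operatorname{res}}$ is a homomorphism) to reduce to $b_1\succeq\zero$; and (v) obtains the h-splitting $\nu_1$ from Proposition~\ref{epicspl1} and then invokes Theorem~\ref{splitdir}, exactly as the paper does.
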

\begin{proof}
   (i) The same argument in Theorem \ref{trSh} works.


    (ii) As in Theorem \ref{trSh}(i), we have that
    \[
    \ker_N \pi_1^{\operatorname{res}} \subseteq \{((b_1, b_2),(b_1', b_2')) : b_1=b_1'
    \}.
    \]



But, by the definition of $\pi_1^{\operatorname{res}}$ and $\mathcal P$, we see, for $((b_1, b_2),(b_1, b_2')) \in
  \ker_N\pi_1^{\operatorname{res}}$, that
    \[
f_1(b_1) = f_2(b_2)= f_2(b_2'),
    \]
$\ \ \ $ and hence
    \[
    \ker_N \pi _1^{res} = \{ ((b_1, b_2),(b_1, b_2')): b_1 \in \mathcal P_{1}
     ,f_2(b_2)=f_2(b_2')\}.
    \]
$\ \ \ $ Therefore, we obtain
\[
f_2( b_2) (-) f_2(b_2')=f_2 (b_2 (-) b_2') \succeq \zero.
\]

Hence ${\pi}(\ker _{N} \pi _1^{res} ) \subseteq \ker_{\Mod,\mathcal
P_2}f_2$.

(iii) $(\supseteq)$ is clear. Conversely, suppose $\pi_1^{\textrm{res}} ( b_1,b_2)
\succeq \zero.$ Then $b_1 \succeq \zero,$ implying $f_2 (b_2)
= f_1(b_1) \succeq \zero,$ i.e., $ b_2   \in
\ker_{\Mod,\mathcal P}  f_2.$

 (iv) $f_1\pi_1^{\operatorname{res}} (b_1,b_2) = f_1(b_1) = f_2(b_2) = f _2
\pi_2^{res} (b_1,b_2).$

(v) Take $\nu_1: \mathcal P_1 \to \mathcal P$ be the homomorphism
$h-$splitting $\pi_1$ via the identity map on $\mathcal P_1$, and we
can apply  Theorem \ref{splitdir}.

%
%
%
%
%
%
\end{proof}
The next result, although not symmetric, does not require the onto
hypothesis.

\begin{lem}[Semi-Schanuel, $\preceq$-onto $\preceq$-version]\label{trSh11} Take $\preceq =
\preceq_{\operatorname{Null}}.$
    Suppose we are given  homomorphisms $\mathcal P_1  \overset {f_1} \longrightarrow \mathcal M$ and
    $\mathcal P_2  \overset {f_2} \longrightarrow \mathcal M$  with $f_2$    $\preceq$-onto.
    \begin{enumerate}\eroman
        \item
        There is a submodule $$\mathcal P _\preceq = \{ (b_1, b_2)   : f_1(b_1) \preceq
        f_2(b_2)\}$$ of $\mathcal P_1 \oplus \mathcal P_2$.  Let $\pi_i$ denote the
    projection to $\mathcal P_i$ on the $i$-th coordinate, and $\pi_i^{\operatorname{res}}$ is its restriction to $\mathcal P_\preceq$.
Then  $\pi_1^{\operatorname{res}}$ is onto.

          \item
    There is a natural   homomorphism $ \ker _{N} \pi _1^{res} \to  \mathcal P_{2} $
    via
    \[
   \quad ((b_1, b_2),(b_1, b_2')) \mapsto   b_2 (-) b_2'  ,
    \]  which restricts to a homomorphism $ \pi:\ker_{N, \preceq}  \pi _1^{res}\to  \mathcal P_{2} $
    whose image is a subset of the following set $$\{  b_2(-)b_2' :  f(b_2) (-) f(b_2') \in
    \mathcal M^\circ\}.$$  Furthermore, we have $  \pi: \ker_{N, \preceq}\pi_1^{\operatorname{res}} \to \ker_{\Mod,\mathcal P_2} f_2.$
     \item $\ker_{\Mod,\mathcal P_\preceq}\pi_1^{\operatorname{res}} =   \{ (b_1, b_2) \in \mathcal P _\preceq: \ b_1 \succeq \zero,\ b_2   \in \ker_{\Mod,\mathcal P}  f_2\}.$
    \item $f_1\pi_1^{\operatorname{res}} (b_1,b_2)  \preceq    f _2
\pi_2^{res} (b_1,b_2).$
    \end{enumerate}
\end{lem}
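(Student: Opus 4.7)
The plan is to mirror the proofs of Theorem~\ref{trSh} and Lemma~\ref{trSh118}, adapting each step to the weaker hypothesis that $f_2$ is only $\preceq$-onto and that $\mathcal P_\preceq$ is cut out by a $\preceq$-inequality rather than an equality. First, for (i), I would check closure of $\mathcal P_\preceq$ under the $\mathcal A$-module operations using Definition~\ref{precedeq07}(iii)--(iv) together with the fact that $f_1,f_2$ are homomorphisms: if $f_1(b_i)\preceq f_2(b_i')$ for $i=1,2$, then $f_1(b_1+b_2)=f_1(b_1)+f_1(b_2)\preceq f_2(b_1')+f_2(b_2')=f_2(b_1'+b_2')$, and similarly for the $\mathcal A$-action. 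For surjectivity of $\pi_1^{\operatorname{res}}$, given $b_1\in\mathcal P_1$ apply the $\preceq$-ontoness of $f_2$ to the element $f_1(b_1)\in\mathcal M$ to obtain $b_2\in\mathcal P_2$ with $f_1(b_1)\preceq f_2(b_2)$; then $(b_1,b_2)\in \mathcal P_\preceq$ and $\pi_1^{\operatorname{res}}(b_1,b_2)=b_1$.

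For part (ii), since $\pi_1^{\operatorname{res}}$ is the projection to the first coordinate, any $((b_1,b_2),(b_1',b_2'))\in \ker_N\pi_1^{\operatorname{res}}$ forces $b_1=b_1'$, so the assignment $((b_1,b_2),(b_1,b_2'))\mapsto b_2(-)b_2'$ is well-defined; the homomorphism property is immediate since it is (up to the negation applied in the second argument) a combination of two coordinate projections on $\mathcal P_1\oplus\mathcal P_2$. Restricting to $\ker_{N,\preceq}\pi_1^{\operatorname{res}}$ imposes the additional condition $b_1\succeq\zero$, hence $f_1(b_1)\succeq\zero$ by Remark~\ref{morprop}(i). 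Using the hypothesis $\preceq=\preceq_{\operatorname{Null}}$ (Definition~\ref{precnul}), I would write $f_2(b_2)=f_1(b_1)+c$ and $f_2(b_2')=f_1(b_1)+c'$ for some $c,c'\in\mathcal M_{\operatorname{Null}}$; subtracting yields $f_2\bigl(b_2(-)b_2'\bigr)=f_2(b_2)(-)f_2(b_2')=c(-)c'$, which is a quasi-zero (and in particular belongs to $\mathcal M_{\operatorname{Null}}$), so $b_2(-)b_2'\in\ker_{\Mod,\mathcal P_2}f_2$, as claimed.

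Parts (iii) and (iv) then reduce to quick bookkeeping. For (iii) the inclusion $(\supseteq)$ is immediate from the definition of $\mathcal P_\preceq$, while conversely $(b_1,b_2)\in\ker_{\Mod,\mathcal P_\preceq}\pi_1^{\operatorname{res}}$ means $b_1\succeq\zero$; by Remark~\ref{morprop}(i) this gives $f_1(b_1)\succeq\zero$, and combining with $f_1(b_1)\preceq f_2(b_2)$ and the transitivity of $\preceq$ yields $f_2(b_2)\succeq\zero$, i.e.\ $b_2\in\ker_{\Mod,\mathcal P_2}f_2$. Part (iv) is literally the defining inequality of $\mathcal P_\preceq$: $f_1\pi_1^{\operatorname{res}}(b_1,b_2)=f_1(b_1)\preceq f_2(b_2)=f_2\pi_2^{\operatorname{res}}(b_1,b_2)$.

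The main obstacle is the precise quasi-zero statement in part (ii) about the image of $\pi$: showing that $f_2(b_2)(-)f_2(b_2')$ actually lies in $\mathcal M^\circ$ rather than merely in the (possibly larger) set $\mathcal M_{\operatorname{Null}}$. This is where the explicit choice $\preceq=\preceq_{\operatorname{Null}}$ stated at the top of the lemma is essential, since it allows a difference of two elements surpassing $f_1(b_1)$ to be rewritten as a difference of null elements; under the canonical case $\preceq=\preceq_\circ$ of Example~\ref{precmain}(i) the two notions coincide and the claim becomes transparent. Everything else is a routine transcription of the arguments already given for Theorem~\ref{trSh} and Lemma~\ref{trSh118}, with each equality replaced by its $\preceq$-analog.
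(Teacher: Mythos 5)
Your proposal tracks the paper's proof essentially step for step: part (i) is verified the same way (closure by homomorphism property of $f_1,f_2$, surjectivity of $\pi_1^{\operatorname{res}}$ by applying $\preceq$-ontoness of $f_2$ to $f_1(b_1)$); part (ii) uses the same decomposition $f_2(b_2)=f_1(b_1)+c$, $f_2(b_2')=f_1(b_1)+c'$ with $c,c'$ null; and parts (iii) and (iv) are word-for-word the paper's argument. One small computational slip in (ii): subtracting the two expressions gives
\[
f_2(b_2)(-)f_2(b_2') \;=\; \bigl(f_1(b_1)+c\bigr)(-)\bigl(f_1(b_1)+c'\bigr) \;=\; f_1(b_1)^\circ + c(-)c',
\]
not $c(-)c'$; there is no cancellation in a semiring. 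This does not damage the conclusion, since the extra term $f_1(b_1)^\circ$ is itself a quasi-zero (so the total is still $\succeq\zero$, and, under the intended reading where $\mathcal M_{\operatorname{Null}}=\mathcal M^\circ$, still lies in $\mathcal M^\circ$), but the identity as written is incorrect and should be repaired. Your closing remark about the role of $\preceq=\preceq_{\operatorname{Null}}$ in forcing $f_2(b_2)(-)f_2(b_2')$ into $\mathcal M^\circ$ rather than the potentially larger $\mathcal M_{\operatorname{Null}}$ is exactly the right thing to be wary of; the paper's own proof is equally terse at that point, concluding ``$\succeq\zero$, in particular $\in\mathcal M^\circ$,'' so you have correctly identified where the argument leans on the specific choice of surpassing relation.
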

\begin{proof}
   (i) The proof is similar to the previous cases. Clearly $\mathcal P_\preceq$ is a submodule of $\mathcal P_1 \oplus \mathcal P_2$.  Since $f_2$ is  $\preceq$-onto,
    for any $b_1 \in \mathcal P_{1}$
    there is $b_2 \in \mathcal P_2$ such that $f_1(b_1)\preceq f_2(b_2),$
    implying $(b_1,b_2)\in \mathcal P_\preceq$. Hence $\pi_1$ restricts to an
    onto homomorphism $\pi_1^{\operatorname{res}}:\mathcal P_\preceq\to \mathcal
    P_{1}$.

    (ii) As before, we have that
    \[
    \ker_N \pi_1^{\operatorname{res}} \subseteq \{((b_1, b_2),(b_1', b_2')) : b_1=b_1'
    \}.
    \]
Hence, the map
\begin{equation}\label{eqn: 1}
\ker _{N} \pi _1^{res} \to  \mathcal P_{2}, \quad ((b_1, b_2),(b_1, b_2')) \mapsto b_2 (-) b_2'
\end{equation}
is well-defined and clearly a homomorphism. Furthermore, we can restrict \eqref{eqn: 1} to a homomorphism
\[
\pi: \ker _{N,\preceq} \pi _1^{res} \to  \mathcal P_{2}, \quad ((b_1, b_2),(b_1, b_2')) \mapsto b_2 (-) b_2'
\]
since $\ker _{N,\preceq} \pi _1^{res} \subseteq \ker _{N} \pi _1^{res}$. Now, let $$X:=\{  b_2(-)b_2' :  f(b_2) (-) f(b_2') \in
\mathcal M^\circ\}.$$ For $\alpha:=((b_1, b_2),(b_1, b_2')) \in \mathcal P_\preceq \times \mathcal P_\preceq$, it is clear that $\alpha \in \ker _{N,\preceq} \pi _1^{res}$ if and only if $b_1 \succeq \zero$. Furthermore, as $\alpha \in \mathcal P_\preceq \times \mathcal P_\preceq$, in this case, we have that
\[
\zero \preceq f_1(b_1) \preceq f_2(b_2), \quad \zero \preceq f_1(b_1) \preceq f_2(b_2').
\]
So writing $f_2(b_2)=  f_1(b_1) + c $ and  $f_2(b_2')= f_1(b_1) +
{c'} $ for some $c,c' \succeq \zero,$ we obtain $$f_2( b_2) (-) f_2(b_2') =
f_1(b_1)^\circ + c (-){c'} \succeq \zero,$$ in particular, $f_2( b_2) (-) f_2(b_2') \in \mathcal M^\circ$. Therefore, we have that $\pi(\alpha) \in X$.

Since $f_2$ is a homomorphism, we have that $f(b_2) (-)
f(b_2')=f_2(b_2 (-) b_2')$ and hence the set $X$ becomes the
following set
\[
X=\{b_2(-)b_2': f_2(b_2(-)b_2') \in \mathcal M^\circ\},
\]
in particular, $X \subseteq \ker _{\textrm{Mod},\mathcal P_2} f_2$ and hence we have $\pi: \ker_{N, \preceq}\pi_1^{\operatorname{res}} \to \ker_{\Mod,\mathcal P_2} f_2$.





(iii) $(\supseteq)$ is clear. Conversely, suppose $\pi_1^{\textrm{res}} ( b_1,b_2)
\succeq \zero.$ Then $b_1 \succeq \zero,$ implying $f_2 (b_2)
\succeq f_1(b_1) \succeq \zero,$ i.e., $ b_2   \in
\ker_{\Mod,\mathcal P}  f_2.$

 (iv) $f_1\pi_1^{\operatorname{res}} (b_1,b_2) = f_1(b_1) \preceq f_2(b_2) = f _2
\pi_2^{res} (b_1,b_2).$


\end{proof}

 We also have the following $\preceq$ analogs of the
classical proof of Schanuel.


\begin{thm}[Semi-Schanuel, another $\preceq$-version]\label{trSh119}
Given a $\preceq$-morphism $\mathcal P   \overset {f }
\longrightarrow \mathcal M$ and
a homomorphism $\mathcal P'  \overset {f'} \longrightarrow \mathcal M'$  with $\mathcal P$ and $\mathcal P'$ $\preceq$-projective and $f$  $\preceq$-onto,
    and a $\preceq$-onto $\preceq$-morphism $\mu: \mathcal M \to \mathcal
    M'$,
let $\mathcal K =  \ker_{\Mod,\mathcal P}f$ and  $\mathcal K' =
\ker_{\Mod,\mathcal P'} f'.$ Then there is a $\preceq$-onto
$\preceq$-splitting $\preceq$-morphism $g: \mathcal K' \oplus
\mathcal P \to \mathcal P'$, with a $\preceq$-morphism $\Phi:
\mathcal K \to \ker_{\Mod,\mathcal K' \oplus \mathcal P}g $ which is
1:1 (as a set-map).
\end{thm}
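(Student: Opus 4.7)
My approach would mirror the classical proof of Schanuel's Lemma, but routed through a single lifting that uses $\mu$ to compensate for the absence of any hypothesis that $f'$ be $\preceq$-onto. First, note that since $f$ and $\mu$ are both $\preceq$-onto $\preceq$-morphisms, their composition $\mu f \colon \mathcal P \to \mathcal M'$ is also $\preceq$-onto. Since $\mathcal P'$ is $\preceq$-projective and $f'\colon \mathcal P' \to \mathcal M'$ is a $\preceq$-morphism (being a homomorphism), I can invoke Remark~\ref{remark: for later use} / the defining universal property to produce a $\preceq$-morphism $\psi \colon \mathcal P' \to \mathcal P$ with $\mu f\, \psi \succeq f'$. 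This $\psi$ is the only structural input I need beyond the direct summand bookkeeping.

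Next, I would form the pullback-like subsystem
\[
\mathcal X \;=\; \{\,(p,p') \in \mathcal P \oplus \mathcal P' : \mu f(p) \succeq f'(p')\,\},
\]
modelled on $\mathcal P_\preceq$ from Lemma~\ref{trSh11}, and verify that it is a systemic submodule of $\mathcal P \oplus \mathcal P'$. The projection $\pi_{\mathcal P'}\colon \mathcal X \to \mathcal P'$ admits the $\preceq$-section $p' \mapsto (\psi(p'),p')$ by the defining inequality of $\mathcal X$, which in particular makes $\pi_{\mathcal P'}$ a $\preceq$-onto homomorphism and shows it is $\preceq$-split by a $\preceq$-morphism. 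Applying Theorem~\ref{splitdir} to $\pi_{\mathcal P'}$ and this section, I obtain a $\preceq$-direct sum decomposition of $\mathcal X$ whose complementary summand is null-mapped by $\pi_{\mathcal P'}$ and which can be identified with the subsystem carved out by the $\mathcal P$-projection restricted to $\{(p, \zero) : (p, \zero) \in \mathcal X\}$; using the embedding $\mathcal K' \hookrightarrow \mathcal P'$ and translating by $\psi$ then lets me identify this complement, in the $\preceq$-sense, with $\mathcal K' \oplus \mathcal P$. The desired $g$ is then $\pi_{\mathcal P'}$ transported across this identification; it is $\preceq$-onto (because $\pi_{\mathcal P'}$ is) and $\preceq$-splits (via $\psi$ extended by zero).

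Finally, I would define $\Phi \colon \mathcal K \to \ker_{\Mod, \mathcal K' \oplus \mathcal P} g$ by $k \mapsto (\zero_{\mathcal K'}, k)$. Injectivity as a set map is immediate. To verify $\Phi(k) \in \ker_{\Mod}g$, observe that for $k \in \mathcal K$ we have $f(k) \succeq \zero$, hence $\mu f(k) \succeq \zero$, and tracing this through the identification shows that $g(\Phi(k))$ lies in $\mathcal P'_{\mathrm{Null}}$; naturality of the construction plus Remark~\ref{morprop} upgrades this to membership in the null-module kernel. The $\preceq$-morphism axioms for $\Phi$ then follow from those for the inclusion and the additive structure on $\mathcal K' \oplus \mathcal P$.

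\emph{The main obstacle} will be making the identification of $\mathcal X$ with $\mathcal K' \oplus \mathcal P$ work cleanly in the $\preceq$-setting, since unlike the classical case the projection $\mathcal X \to \mathcal P$ is not automatically $\preceq$-onto (that would need $f'$ to be $\preceq$-onto). This forces reliance on $\psi$ rather than a symmetric pullback argument, and one must be careful that the resulting decomposition respects the systemic structure, in particular that the splitting is by a $\preceq$-morphism (not a homomorphism), that the recovered ``$\mathcal K'$''-summand actually agrees with $\mathcal K'$ and not some $\psi$-distorted copy, and that the inequalities $\mu f \psi \succeq f'$ assemble correctly so that $\Phi(\mathcal K)$ lands in the null-module kernel rather than merely in some $\preceq$-larger submodule.
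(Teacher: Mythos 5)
Your plan diverges from the paper's proof in two key ways, and both introduce gaps.

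First, the lifting direction: you lift $f'$ along $\mu f$ to get $\psi\colon \mathcal P' \to \mathcal P$ with $\mu f\,\psi \succeq f'$, whereas the paper lifts $\mu f$ along $f'$ to get $h\colon \mathcal P \to \mathcal P'$ with $\mu f \preceq f' h$. Your direction is actually better justified by the stated hypotheses (you need $\mu f$ to be $\preceq$-onto, which follows from $f$, $\mu$ being $\preceq$-onto; the paper's direction tacitly needs $f'$ to be $\preceq$-onto). But $\psi$ points the wrong way for building the map the theorem asks for: the target $g$ has codomain $\mathcal P'$, and the natural thing to feed into $g(b',b)$ is a map $\mathcal P \to \mathcal P'$, which is exactly $h$, not $\psi$.

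Second, and more seriously, the passage from the pullback $\mathcal X = \{(p,p') : \mu f(p) \succeq f'(p')\}$ back to $\mathcal K' \oplus \mathcal P$ is unproven and does not hold in the form you state it. By Theorem~\ref{splitdir}(ii) the complementary summand of $\mathcal X$ under $\pi_{\mathcal P'}$ is $\ker_{\Mod,\mathcal X}\pi_{\mathcal P'} = \{(p,p') \in \mathcal X : p' \succeq \zero\}$, which is not $\{(p,\zero) : (p,\zero)\in\mathcal X\}$ and has no evident isomorphism with $\mathcal K' \oplus \mathcal P$ (note also that $(p,\zero)\in\mathcal X$ only for $p \in \ker_{\Mod,\mathcal P}\mu f$, not all of $\mathcal P$). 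Since $g$ was to be ``$\pi_{\mathcal P'}$ transported across this identification,'' you never actually produce a map with domain $\mathcal K'\oplus\mathcal P$, and the entire second half of the argument is suspended on this unestablished isomorphism. Consequently your $\Phi(k) = (\zero_{\mathcal K'},k)$ cannot be checked against any concrete $g$; and one can see it is unlikely to work, since with the natural candidate $g(b',b)=h(b)(-)b'$ one gets $g(\zero,k)=h(k)$, which lies in $\mathcal K'$ but is not generally in $\mathcal P'_{\operatorname{Null}}$.

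The paper's proof avoids all of this by being completely explicit: set $g(b',b)=h(b)(-)b'$; show $g$ is $\preceq$-onto by producing, for any $b'\in\mathcal P'$, some $b$ with $\mu f(b)\succeq f'(b')$ (chaining $\preceq$-ontoness of $\mu$ and $f$), whence $h(b)(-)b'\in\mathcal K'$ and $g(h(b)(-)b',b)\succeq b'$; get the $\preceq$-splitting of $g$ from $\preceq$-projectivity of $\mathcal P'$ via Remark~\ref{remark: for later use}; and define $\Phi(b)=(h(b),b)$, which lands in $\ker_{\Mod,\mathcal K'\oplus\mathcal P}g$ because $g(h(b),b)=h(b)(-)h(b)\succeq\zero$ and $f'(h(b))\succeq\mu f(b)\succeq\zero$. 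If you want to keep your choice of $\psi$ rather than $h$, you would need to rebuild $g$ and $\Phi$ directly out of $\psi$ and verify each property from scratch; the pullback-plus-identification shortcut as written does not go through.
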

\begin{proof} Lifting $\mu f$ to a $\preceq$-morphism $h: \mathcal P \to \mathcal P' $
satisfying $\mu f \preceq f'h,$ define $g: \mathcal K' \oplus
\mathcal P \to \mathcal P'$ by $g(b',b) = h(b)(-)b'.$

We first claim that $g$ is $\preceq$-onto. In fact, one may observe that for any $b' \in \mathcal P'$, there exists $b \in \mathcal P$ such that
\[
\mu f(b) \succeq f'(b').
\]
Indeed, let $c=f'(b')$. Since $\mu$ is $\preceq$-onto, there exists $x \in \mathcal M$ such that $c \preceq \mu(x)$. Moreover, since $f$ is $\preceq$-onto, we have $b \in \mathcal P$ such that $x \preceq f(b)$, in particular, we have that
\[
f'(b')=c \preceq \mu(x) \preceq  \mu f (b).
\]
Since $f'$ is a homomorphism, this implies that
\[
f'(h(b) (-)b') =f'(h(b)) (-) f'(b')\succeq   \mu f(b) (-) f'(b')
\succeq \zero.
\]
Therefore, we have that
\[
h(b) (-)b'\in \mathcal K'.
\]
Furthermore, we have that
\[
g(h(b)(-)b',b) =
h(b)(-)(h(b)(-)b') \succeq b',
\]
implying $g$ is $\preceq$-onto. Since $\mathcal P'$ is $\preceq$-projective, $g$ $\preceq$-splits (see, Remark \ref{remark: for later use}.)

For the last assertion that there is a $\preceq$-monic $\Phi:\mathcal K \to \ker_{\Mod,\mathcal K' \oplus
    \mathcal P }g$, take the map
\[
\Phi:\mathcal K \to \ker_{\Mod,\mathcal K' \oplus
    \mathcal P }g, \quad b \mapsto ( h(b),b).
\]
One can easily see that $\Phi$ is well-defined since $b \in \mathcal K$ implies that
\[
f'h (b) \succeq \mu f(b) \succeq \mu  (\zero) = \zero,
\]
showing that $h(b) \in \mathcal K'$, also
\[
g(h(b),b)=h(b) (-) h(b) \succeq \zero,
\]
showing that $(h(b),b) \in \ker_{\Mod,\mathcal K' \oplus
    \mathcal P }g$. Finally, it is clear that $\Phi$ is one-to-one as a set-map.

\end{proof}


 Theorem~\ref{trSh119} can sometimes be used in conjunction with  Theorem~\ref{Sch2}:

\begin{cor}\label{Sch29} In the notation of Theorem~\ref{trSh119}, if $\mathcal K
$ is $\preceq$-projective and the map $\Phi$ is the
$\preceq$-retract
    of a split $\preceq$-morphism $\ker_{\Mod,\mathcal K' \oplus \mathcal P}\to \mathcal K,$ then
$\mathcal K'$ also is $\preceq$-projective.
\end{cor}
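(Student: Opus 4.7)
The plan is to propagate $\preceq$-projectivity upward in three successive steps along the structure supplied by Theorem~\ref{trSh119}: first from $\mathcal K$ to the null-module kernel of $g$, then from there (together with $\mathcal P'$) to the ambient direct sum $\mathcal K'\oplus\mathcal P$, and finally from that direct sum down to the summand $\mathcal K'$.

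First I would unpack the hypothesis that $\Phi$ is the $\preceq$-retract of a split $\preceq$-morphism $\ker_{\Mod,\mathcal K' \oplus \mathcal P}g\to \mathcal K$ as the assertion that $\ker_{\Mod,\mathcal K' \oplus \mathcal P}g$ is a $\preceq$-retract of $\mathcal K$ in the sense of Definition~\ref{precspli}. Because $\mathcal K$ is $\preceq$-projective, Proposition~\ref{projspl} then transfers $\preceq$-projectivity to $\ker_{\Mod,\mathcal K' \oplus \mathcal P}g$. Next, observe that the $\preceq$-onto $\preceq$-splitting $\preceq$-morphism $g:\mathcal K'\oplus\mathcal P\to\mathcal P'$ produced in Theorem~\ref{trSh119} now has both its codomain $\mathcal P'$ and its null-module kernel $\preceq$-projective; this is the input for (the $\preceq$-version of) Theorem~\ref{Sch2}, from which one should conclude that $\mathcal K'\oplus\mathcal P$ itself is $\preceq$-projective. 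Finally, since $\mathcal K'$ is a direct summand of $\mathcal K'\oplus\mathcal P$ via the canonical projection and inclusion, which are honest homomorphisms that split in the ordinary sense, Lemma~\ref{lemma: project direct sum} (or equivalently Proposition~\ref{projspl} applied to the split projection) yields that $\mathcal K'$ is $\preceq$-projective.

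The hard part will be the second step. Theorem~\ref{Sch2} is stated for a $\preceq$-onto \emph{homomorphism} $\pi$, whereas the map $g(b',b)=h(b)(-)b'$ built in the proof of Theorem~\ref{trSh119} is only a $\preceq$-morphism, since $h$ is a $\preceq$-lift of $\mu f$ and need not be additive. To get past this one should either reverify that the splitting construction in the proof of Theorem~\ref{Sch2} (the definition of $\tilde f(b)=\tilde f_1(\pi(b))+\tilde f_2(\pi_2(b))$) goes through with $\pi$ replaced by a $\preceq$-onto $\preceq$-morphism that $\preceq$-splits, exploiting directly the existing $\preceq$-section $\nu:\mathcal P'\to\mathcal K'\oplus\mathcal P$ of $g$, or else invoke the $\preceq$-splitting analog of Theorem~\ref{splitdir} to exhibit $\mathcal K'\oplus\mathcal P$ as a $\preceq$-direct sum involving $\nu(\mathcal P')$ and $(\one_{\mathcal K'\oplus\mathcal P}(-)\nu g)(\mathcal K'\oplus\mathcal P)$, whereupon Lemma~\ref{lemma: project direct sum} finishes the argument. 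Either route should be straightforward once the termwise computations mirror those in Lemma~\ref{lem: 3.14}(ii), and only the homomorphism hypothesis on $\pi$ in Theorem~\ref{Sch2} needs genuine loosening.
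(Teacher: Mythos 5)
Your proposal follows the same three-step skeleton as the paper's (very terse, one-sentence) proof: use Proposition~\ref{projspl} to transfer $\preceq$-projectivity from $\mathcal K$ to $\ker_{\Mod,\mathcal K'\oplus\mathcal P}g$, then invoke Theorem~\ref{Sch2} to get $\mathcal K'\oplus\mathcal P$ $\preceq$-projective, and finally apply Lemma~\ref{lemma: project direct sum} to deduce $\preceq$-projectivity of the summand $\mathcal K'$. You are in fact more careful than the paper itself: you correctly observe that Theorem~\ref{Sch2} as stated requires $\pi$ to be a $\preceq$-onto \emph{homomorphism}, whereas the $g$ constructed in Theorem~\ref{trSh119} is only a $\preceq$-morphism (since $h$ is a $\preceq$-lift of $\mu f$ and need not be additive). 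This matters: the computation $\pi(\pi_2(b))=\pi(b)(-)\pi\nu\pi(b)\succeq\zero$ in the proof of Theorem~\ref{Sch2} genuinely uses additivity of $\pi$, and with a mere $\preceq$-morphism one only gets $\pi(\pi_2(b))\preceq\pi(b)(-)\pi\nu\pi(b)$, which does not chain with $\succeq\zero$ to place $\pi_2(b)$ in the null-module kernel. The paper's proof of the corollary elides this point entirely, so your explicit note that the homomorphism hypothesis needs loosening — and your two suggested patches (reverify the $\tilde f=\tilde f_1\pi+\tilde f_2\pi_2$ construction for a $\preceq$-split $\preceq$-morphism, or exhibit $\mathcal K'\oplus\mathcal P$ as a $\preceq$-direct sum via a suitable analog of Theorem~\ref{splitdir}) — is a genuine improvement, although be aware that Theorem~\ref{splitdir} and Lemma~\ref{lem: 3.14}(ii) as written also take $\pi$ to be a homomorphism, so the reverification cannot be sidestepped. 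One further small mismatch neither you nor the paper flags: the conclusion of Theorem~\ref{Sch2} is $(\preceq,h)$-projectivity, which is a priori weaker than the $\preceq$-projectivity claimed for $\mathcal K'$.
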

\begin{proof}
$\ker_{\Mod,\mathcal K' }g$ is $\preceq$-projective, implying $
\mathcal K' \oplus \mathcal P $ is $\preceq$-projective, and thus  $
\mathcal K'   $ is $\preceq$-projective by~Lemma~\ref{lemma: project
direct sum}.
\end{proof}

\end{document}